\def\Kos{\operatorname{Kos}}
\def\modact{\operatorname{ModAct}}
\def\mult{\operatorname{mult}}
\def\id{\operatorname{id}}
\def\Spec{\operatorname{Spec}}
\def\Ass{\operatorname{Ass}}
\def\Supp{\operatorname{Supp}}
\def\Ext{\operatorname{Ext}}
\def\Tor{\operatorname{Tor}}
\def\im{\operatorname{im}}
\def\length{\operatorname{length}}
\def\rank{\operatorname{rank}}
\def\fakeht{\vphantom{E^{E_E}_{E_E}}}
\def\coker{\operatorname{coker}}
\define\Hom{\operatorname{Hom}}
\def\Sym{\operatorname{Sym}}
\def\ann{\operatorname{ann}}
\def\Kos{{\mathrm {Kos}}}
\def\incl{\operatorname{incl}}
\def\grade{\operatorname{grade}}
\def\pd{\operatorname{pd}}
\def\Tot{\operatorname{Tot}}
\def\HH{\operatorname{H}}
\def\ev{\operatorname{ev}}
\def\quot{\operatorname{quot}}
\newtheorem{theorem}{Theorem}[section]
\newtheorem{lemma}[theorem]{Lemma}
\newtheorem{corollary}[theorem]{Corollary}
\newtheorem{proposition}[theorem]{Proposition}
\newtheorem{observation}[theorem]{Observation}
\theoremstyle{definition}
\newtheorem{definition}[theorem]{Definition}
\newtheorem{remark}[theorem]{Remark}
\newtheorem{remark-no-advance}[equation]{Remark}
\newtheorem{remarks-no-advance}[equation]{Remarks}
\newtheorem{claim-no-advance}[equation]{Claim}
\newtheorem{proposition-no-advance}[equation]{Proposition}
\newtheorem{corollary-no-advance}[equation]{Corollary}
\newtheorem{remarks}[theorem]{Remarks}
\newtheorem{data}[theorem]{Data}
\newtheorem{notation}[theorem]{Notation}
\newtheorem{present summary}[theorem]{Present Summary}
\newtheorem{example}[theorem]{Example}
\newtheorem{chunk}[theorem]{}
\newtheorem{subchunk}[equation]{}
\newtheorem{marching orders}[theorem]{Marching Orders}
\newtheorem{circle the wagons}[theorem]{Circle the wagons}
\newtheorem*{Remark}{Remark}
\newtheorem*{proof-of-claim-1}{Proof of Claim~\ref{claim1}}
\newtheorem*{proof-of-claim-2}{Proof of Claim~\ref{claim2}}
\newtheorem*{proof-of-key}{Proof of Lemma~\ref{key}}
\newtheorem*{proof-of-claim-3}{\rm Proof of Claim~\ref{claim3}}
\numberwithin{equation}{theorem}
\numberwithin{table}{theorem}
\numberwithin{figure}{theorem}
\begin{document}

\baselineskip=16pt

\title[Canonical complexes associated to a matrix]{\bf  
Canonical complexes associated to a matrix}

\date\today 
\author[Andrew R. Kustin]
{Andrew R. Kustin}

\thanks{AMS 2010 {\em Mathematics Subject Classification}.
Primary 13D02; Secondary 13C40.}

\thanks{The 
author was partially supported by the the Simons Foundation
grant number 233597.}

\thanks{Keywords: Buchsbaum-Rim complex, depth-sensitivity, determinantal ring, duality, Eagon-Northcott complex, hooks, Koszul complex, maximal Cohen-Macaulay module, perfect module, Schur module, Weyl module.
}

\address{Department of Mathematics, University of South Carolina,
Columbia, SC 29208} \email{kustin@math.sc.edu}

\begin{abstract}
Let $\Phi$ be an  $\goth f\times \goth g$ matrix with entries from a commutative Noetherian 
 ring $R$, with $\goth g\le \goth f$.
Recall the family of generalized Eagon-Northcott complexes $\{\mathcal C^{i}_\Phi\}$ associated to $\Phi$. (See, for example, Appendix A2 in ``Commutative Algebra with a view toward Algebraic Geometry'' by D.~Eisenbud.) For each integer $i$,  
$\mathcal C^i_\Phi$  
is a complex of free $R$-modules. For example,  $\mathcal C^{0}_\Phi$ is the original ``Eagon-Northcott'' complex with zero-th homology equal to the ring $R/I_\goth g(\Phi)$ defined by ideal generated by the maximal order minors of $\Phi$; and  $\mathcal C^{1}_\Phi$ is the ``Buchsbaum-Rim'' complex with zero-th homology equal to the cokernel of the transpose of $\Phi$. If $\Phi$ is sufficiently general, then each $\mathcal C^{i}_\Phi$, with $-1\le i$, is acyclic; and, if $\Phi$ is generic, then these complexes resolve half of the divisor class group of $R/I_\goth g(\Phi)$. The family $\{\mathcal C^{i}_\Phi\}$ exhibits duality; and, if $-1\le i\le \goth f-\goth g+1$, then the complex $\mathcal C^{i}_\Phi$ exhibits depth-sensitivity with respect to the ideal $I_{\goth g}(\Phi)$ in the sense that the tail of $\mathcal C^{i}_\Phi$ of length equal to $\grade(I_{\goth g}(\Phi))$ is acyclic. The entries in the differentials of $\mathcal C^i_\Phi$ are linear in the entries of $\Phi$ at every position except  at  one, where the 
entries of the differential are $\goth g\times \goth g$ minors of $\Phi$.

This paper expands the family $\{\mathcal C^i_\Phi\}$ to a family of complexes
$\{\mathcal C^{i,a}_\Phi\}$ for integers $i$ and $a$ with $1\le a\le \goth g$.
The entries in the differentials of $\{\mathcal C^{i,a}_\Phi\}$ are linear in the entries of $\Phi$ at every position except at  two consecutive positions. At one of the  exceptional positions the entries are $a\times a$ minors of $\Phi$, at the other exceptional position the entries are $(\goth g-a+1)\times (\goth g-a+1)$ minors of $\Phi$. 

The complexes $\{\mathcal C^i_\Phi\}$ are equal to 
$\{\mathcal C^{i,1}_\Phi\}$ and $\{\mathcal C^{i,\goth g}_\Phi\}$. The complexes $\{\mathcal C^{i,a}_\Phi\}$ exhibit all of the properties of $\{\mathcal C^{i}_\Phi\}$. In particular, if $-1\le i\le \goth f-\goth g$ and $1\le a\le \goth g$, then $\mathcal C^{i,a}_\Phi$ exhibits depth-sensitivity with respect to the ideal $I_{\goth g}(\Phi)$.
\end{abstract}

\maketitle

\tableofcontents

\bigskip

\section{Introduction.} 

\bigskip

Let $R$ be a commutative Noetherian ring and  $F$ and $G$ be free $R$-modules of rank $\goth f$ and $\goth g$, respectively, with $\goth g\le \goth f$.
Recall that, for each $R$-module homomorphism \begin{equation}\label{Phi}\Phi: G^*\to F\end{equation}
there is a family of generalized Eagon-Northcott complexes $\{\mathcal C_{\Phi}^{i}\}$. (See, for example, Definition~\ref{FFR}, \cite[Appendix A.2]{Ei95},    \cite[2.16]{BV}, or \cite{Ki74}. A more complete history of these complexes may be found in the comments on page 26 in \cite{BV}.) If \begin{equation}\label{8acyclic-range}-1\le i\le \goth f -\goth g+1,\end{equation} then  $\mathcal C_{\Phi}^{i}$ has length $\goth f -\goth g+1$; and, if $\goth f -\goth g+1\le \grade I_{\goth g}(\Phi)$, then   $\mathcal C_{\Phi}^{i}$  is acyclic for $i$ satisfying (\ref{8acyclic-range}). Furthermore, the complexes  $\mathcal C_{\Phi}^{i}$, for $i$ satisfying (\ref{8acyclic-range}), exhibit depth-sensitivity. In particular, if $s\le \grade I_{\goth g}(\Phi)$ for some integer $s$ with  $0\le s\le f-g+1$, then $\HH_j(\mathcal C_{\Phi}^i)=0$ for $\goth f-\goth g+2-s\le j$ and $i$ satisfying  (\ref{8acyclic-range}). 
In the generic situation, the complexes $\mathcal C_{\Phi}^i$, with $i$ satisfying (\ref{8acyclic-range}), resolve the Cohen-Macaulay elements of the divisor class group of the determinantal ring $R/I_{\goth g}(\Phi)$. 
The complexes $\mathcal C_{\Phi}^i$, with $i$ in the range (\ref{8acyclic-range}), exhibit duality:
$$\mathcal C_{\Phi}^i \cong \text{a shift of $\Hom_R(\mathcal C_{\Phi}^j,R)$ in homological degree},$$
for $i+j=\goth f-\goth g$. Also, if $R$ is a graded ring, and a matrix representation of $\Phi$ is a matrix of linear forms, then the Betti tables for the complexes $\{\mathcal C_{\Phi}^i\}$ are pleasing to the eye. The maps are linear, except at, at most one position where the maps have degree $\goth g$. Moreover,  the  position of non-linearity slides, along a line of slope $1$, from the beginning of the complex to the end as $i$ varies from $0$ to $\goth f-\goth g$.

We expand the list of canonical complexes which are associated to the    $R$-module homomorphism (\ref{Phi}). For each pair $(i,a)$ with
\begin{equation}\label{iab} -1\le i\le \goth f-\goth g \quad \text{and} \quad 1\le a\le \goth g,\end{equation} we consider a complex $\mathcal C_{\Phi}^{i,a}$. The classical generalized Eagon-Northcott complexes 
\begin{equation}\label{ClGEN}\{\mathcal C_{\Phi}^{i}| \text{(\ref{8acyclic-range}) holds}\}\end{equation}
are included in the set
\begin{equation}\label{GENGEN}\{\mathcal C_{\Phi}^{i,a}| \text{(\ref{iab}) holds}\}\end{equation} with $\mathcal C_{\Phi}^{i}=
\mathcal C_{\Phi}^{i,1}$ for $-1\le i\le \goth f-\goth g$ and $\mathcal C_{\Phi}^{i}=
\mathcal C_{\Phi}^{i-1,\goth g}$ for $0\le i\le \goth f-\goth g+1$.
 The complexes of (\ref{GENGEN}) exhibit many of the properties as the listed properties for the generalized Eagon-Northcott  complexes (\ref{ClGEN}). Each complex of (\ref{GENGEN})  has length $\goth f -\goth g+1$; and,
$$\text{if $\goth f -\goth g+1\le \grade I_{\goth g}(\Phi)$, then  each  $\mathcal C_{\Phi}^{i,a}$  is acyclic.}$$ The complexes of  (\ref{GENGEN}) exhibit depth-sensitivity. In the generic situation, the complex $\mathcal C_{\Phi}^{i,a}$ of (\ref{GENGEN}) \begin{equation}\label{promise}\textstyle \text{resolves a maximal  Cohen-Macaulay module over the ring $R/I_{\goth g}(\Phi)$ of rank $\binom{\goth g-1}{a-1}$.}\end{equation} 
The complexes of (\ref{GENGEN})  exhibit duality:
$$\mathcal C_{\Phi}^{i,a} \cong \text{a shift of $\Hom_R(\mathcal C_{\Phi}^{j,b},R)$ in homological degree},$$ for $i+j=\goth f-\goth g-1$ and $a+b=\goth g+1$. Also, if $R$ is a graded ring, and  $\Phi$ is a map of degree $1$, then 
the maps of $C_{\Phi}^{i,a}$  are linear, except at, at most two adjacent  positions where the maps have degree $a$ and $\goth g+1-a$. Moreover,  the  position of non-linearity slides, along a line of slope $1$, from the beginning of the complex to the end as $i$ varies from $-1$ to $\goth f-\goth g$; see Example~\ref{3.3}.

The Eagon-Northcott \cite{EN} complex $\mathcal C^0_{\Phi}$ and the Buchsbaum-Rim complex \cite{BR-1,BR-2,BR-3} $\mathcal C^1_{\Phi}$ are very important objects in Commutative Algebra and Algebraic Geometry. (For example, \cite{B-A04,G96,HH,KT94,UV} is a small sampling of the recent papers about Buchsbaum-Rim multiplicity and its application to equisingularity.) We expect that the rest of the family (\ref{GENGEN}) will prove to be valuable tools in these fields.

The complexes $\mathcal C^{i,a}_\Phi$ arise in the study of the homological properties of the primary components of the content ideal $c(fgh)$ of the product of three generic polynomials $f$, $g$, and $h$. These components have been identified \cite[Thm.~4.2]{CVV} and all but one of the components is known to be Gorenstein \cite[Thm.~4.1 and Rem.~4.3]{CVV}.  
The complexes $\mathcal C^{i,a}_\Phi$ also arise 
in the study of 
the resolutions of the symmetric algebra $\Sym(I)$ and the Rees algebra $\mathcal R(I)$ of a grade three Gorenstein ideal $I=(g_1,\dots,g_n)$ in a polynomial ring over a field $k$; see, for example, \cite{KPU-Ann} and \cite[Cor.~6.3]{KPU-BA}, where the special fiber ring $\mathcal F(I)=k[g_1,\dots,g_n]$ of $I$ is resolved.

The complexes $\mathcal C^{i,a}_\Phi$ are  straightforward and they are built in a canonical manner. That is, there are no choices; everything  is coordinate-free. The modules in $\mathcal C^{i,a}_\Phi$ are Schur modules and Weyl modules corresponding to hooks. In other words, the modules in $\mathcal C^{i,a}_\Phi$ all are kernels of Koszul complex maps or Eagon-Northcott complex maps; see \ref{2.3} and \ref{4.6}. The complex $\mathcal C^{i,a}_\Phi$ is obtained by concatenating three finite complexes:
$$\mathbb K \to \bigwedge \to \mathbb L,$$
where  $\mathbb K$ and $\mathbb L$ are standard complexes of Weyl and Schur modules, respectively, and $\bigwedge$ consists of a single exterior power concentrated in one position; see Definition~\ref{CiaPhi} for the details. The complexes $\mathbb K$ and $\mathbb L$ are introduced in Section~\ref{K-and-L}. 

The main result of this paper is Theorem~\ref{main} which states that
if $\Phi$ is sufficiently general, $-1\le i$, and $1\le a\le \goth g$, then $\mathcal C^{i,a}_\Phi$ is an  acyclic complex of free $R$-modules  and $\HH_0(\mathcal C^{i,a}_\Phi)$ is a torsion-free  $R/I_\goth g(\Phi)$-module of rank $\binom{\goth g-1}{a-1}$. 
The most important applications occur when $i$ also satisfies $i\le \goth f-\goth g$. Indeed, in this situation, $\mathcal C^{i,a}_\Phi$  has length $\goth f-\goth g+1$ and,
if 
$\goth f-\goth g+1\le \grade I_\goth g(\Phi)$, then 
$\HH_0(\mathcal C^{i,a}_\Phi)$ is a perfect $R$-module of projective dimension $\goth f-\goth g+1$ resolved by $\mathcal C^{i,a}_\Phi$  and 
$\Ext_R^{\goth f-\goth g+1}(\HH_0(\mathcal C^{i,a}_\Phi),R) $ is a perfect $R$-module resolved by $\mathcal C_\Phi^{\goth f-\goth g-i-1,\goth g+1-a}$;
furthermore, even if $\grade I_\goth g(\Phi)<\goth f-\goth g+1$,   the complex $\mathcal C^{i,a}_\Phi$ exhibits depth-sensitivity with respect to the ideal $I_\goth g(\Phi)$ in the sense that \begin{equation}\label{DS}\text{$\HH_j(\mathcal C^{i,a}_\Phi)=0$ for $\goth f-\goth g+2-\grade I_{\goth g}(\Phi)\le j$, when $-1\le i\le \goth f-\goth g$ and $1\le a\le \goth g$.}\end{equation}
The depth-sensitivity (\ref{DS}) allows one to use truncations of various  $\mathcal C^{i,a}_\Phi$ as acyclic strands in resolutions even when $I_\goth g(\Phi)$ is known to be less than $\goth f-\goth g+1$.

There is a basic similarity between the present paper and and the paper \cite{KU92},
which produces a family of complexes $\{\mathcal D^q_\rho\}$ for each almost alternating homomorphism $\rho$. The family $\{\mathcal D^q_\rho\}$ shares many properties with the family of generalized Eagon-Northcott complexes $\{\mathcal C^{i}_\Phi\}$. The main difference between \cite{KU92} and the present paper is that the homomorphism $\rho$ of \cite{KU92} is special, in the sense that it is an almost alternating homomorphism; whereas, the complexes $\{\mathcal C^{i,a}_\Phi\}$ of the present paper and the generalized  Eagon-Northcott complexes $\{\mathcal C^{i}_\Phi\}$
are both
constructed from an arbitrary homomorphism $\Phi$.
Nonetheless, the statement of Theorem~\ref{main} and  some steps in its proof are  modeled on \cite[Thm.~8.3]{KU92}, although \cite{KU92} does not contain any analogue to Lemma~\ref{key}, which is the key calculation in the present paper. In place of a result like Lemma~\ref{key}, \cite{KU92} first treats the generic case and then specializes to the non-generic case. In the present paper, Lemma~\ref{key} shows that if the image of $\Phi$ contains a basis element of $F$, then 
$$\mathcal C_\Phi^{i,a} \quad \text{and} \quad \mathcal C_{\Phi'}^{i,a}\oplus \mathcal C_{\Phi'}^{i,a-1}$$ have isomorphic homology for some (``smaller'') 
$R$-module homomorphism $\Phi'$. To prove Theorem~\ref{main}, we iterate Lemma~\ref{key} and apply the  acyclicity lemma.  The representation theory that is used in the proof of Lemma~\ref{key} is begun in 
(\ref{dentist}) and (\ref{rep-thy}) and carried out in Proposition~\ref{87.5}.

The complexes $\mathcal C^{i,a}_\Phi$ are defined in \ref{CiaPhi}; examples are given in \ref{3.3} and \ref{5.6}; the duality is treated in \ref{duality}; and the zero-th homology 
$$\HH_0(\mathcal C^{i,a}_\Phi)=\begin{cases}
\frac{\bigwedge^{\goth f-\goth g+a}F}{\im (\bigwedge^a\Phi)\wedge \bigwedge^{\goth f-\goth g}F},&\text{if $i=-1$,}\vspace{5pt}\\
\coker (\bigwedge^{\goth g-a+1}\Phi^*),&\text{if $i=0$, and}\vspace{5pt}\\
\frac{L_{i+1}^{\goth g-a}G}{\Phi^*(F)\cdot L_{i}^{\goth g-a}G},&\text{if $1\le i$.}\end{cases}$$ is calculated in \ref{H0}. The Schur module $L^p_qG$ is described in \ref{2.3} and \ref{4.6}.

\bigskip

\section{Notation, conventions, and preliminary results.}\label{Prelims}

\bigskip

There are three subsections: Ground rules, Grade and perfection, and Multilinear algebra.

\subsection{Ground rules.}

\bigskip\begin{chunk}Unless otherwise noted, $R$ is a commutative Noetherian ring and all functors are functors of $R$-modules; that is, $\otimes$, $\operatorname{Hom}$, $(\underline{\phantom{X}})^*$, $\operatorname{Sym}_i$, $D_i$, and 
$\bigwedge^i$ mean
$\otimes_R$, $\operatorname{Hom}_R$, $\operatorname{Hom}_R(\underline{\phantom{X}},R)$, $\operatorname{Sym}_i^R$, $D_i^R$,  and $\bigwedge^i_R$,  respectively.\end{chunk} 

\begin{chunk}A complex $\mathcal C: \cdots\to C_2\to C_1\to C_0\to 0$ of $R$-modules is called {\it acyclic} if $\HH_j(\mathcal C)=0$ for $1\le j$.\end{chunk}

\begin{chunk}\label{length}
If a complex $\mathcal C$ has the form
$$0\to \mathcal C_{\ell}\to \mathcal C_{\ell-1}\to \dots \to \mathcal C_1\to \mathcal C_0\to 0,$$ with $\mathcal C_0\neq 0$ and $\mathcal C_{\ell}\neq 0$,  then we say that the {\it length of $\mathcal C$} is  $\ell$ and we write $\length(\mathcal C)=\ell$.
\end{chunk}

\begin{chunk}\label{shift} If $\mathbb A$ is a complex and $n$ is an integer, then $\mathbb A[n]$ is a new complex with $\big[\mathbb A[n]\big]_j=\mathbb A_{n+j}$. \end{chunk}

\begin{chunk}\label{Tot}If $(\mathbb A,a)\xrightarrow{\theta} (\mathbb B,b)$ is a map of complexes, then the {\it total complex} (or mapping cone) of $\theta$, denoted $\Tot(\theta)$, is 
the complex $(\mathbb T,t)$ with $\mathbb T=\mathbb A[-1]\oplus \mathbb B$ as a graded module. The  differential in $\mathbb T$ is given by
$$\mathbb T_j= \begin{matrix}\mathbb  A_{j-1}\\\oplus\\\mathbb  B_j\end{matrix}\xrightarrow{\quad t_j\quad}
\begin{matrix}\mathbb  A_{j-2}\\\oplus\\\mathbb  B_{j-1}\end{matrix}=\mathbb T_{j-1},$$
with
$$t_j=\bmatrix a_{j-1}&0\\\theta_{j-1}&-b_j\endbmatrix.  
$$\end{chunk}

\begin{chunk} If $\Phi$ is a matrix (or a homomorphism of free $R$-modules), then $I_r(\Phi)$ is the ideal generated by the
$r\times r$ minors of $\Phi$ (or any matrix representation of $\Phi$). 
\end{chunk}

\begin{chunk}\label{chi}If $S$ is a statement then 
$$\chi(S)=\begin{cases} 1,&\text{if $S$ is true,}\vspace{5pt}\\0,&\text{if $S$ is false.}\end{cases}$$\end{chunk}

\bigskip
\subsection{Grade and perfection.}

\bigskip\begin{chunk}\label{gunm-jr}The {\it grade} of a proper ideal  $I$ in a Noetherian ring $R$  is the length of a maximal  $R$-regular sequence in $I$. The unit ideal $R$  of $R$ is regarded as an ideal  of infinite grade.
\end{chunk}

\begin{chunk}\label{perfect} 
Let $M$ be a non-zero finitely generated module over a Noetherian ring $R$ and let $\ann(M)$ be the annihilator of $M$ and $\pd_RM$ be the projective dimension of $M$. It is well-known that $$\grade \ann (M)=\min\{j\mid \Ext^j_R(M,R)\neq 0\};$$therefore, it follows that 
\begin{equation}\label{above}\textstyle \operatorname{grade}\ann (M)\le \operatorname{pd}_R M.\end{equation}
If equality holds in (\ref{above}), then $M$ is called a {\it perfect $R$-module}.  
Recall, for example, that if $R$ is Cohen-Macaulay and $M$ is perfect, then $M$ is Cohen-Macaulay. 
 (This is not the full story. For more information, see, for example, \cite[Prop.~16.19]{BV} or \cite[Thm.~2.1.5]{BH}.)

The ideal $I$  in  $R$ is called a {\it perfect ideal} if $R/I$ is a perfect $R$-module.
\end{chunk}

\begin{chunk}\label{rank}
Let $M$ be a finitely generated $R$-module. 
The $R$-module $M$ has {\it rank} $r$ if $M_{\mathfrak p}$ is a free 
$R_{\mathfrak p}$-module of rank $r$ for all associated primes $\mathfrak p$ of  $\Ass R$.
If every non-zero-divisor in $R$ is regular on $M$, then $M$ is called {\it torsion-free}. 
 A proof of the following result  may be found in \cite[Prop. 1.25]{KU92}.

\begin {proposition-no-advance} \label{1.25} Let $M$ be a non-zero finitely
generated
$R$-module 
with finite projective dimension. 
Suppose that $I$ is a perfect  
ideal of $R$ with
$IM = 0$. For each integer $w$, with $1 \leq w \leq \pd_RM$, let
$F_{w}$ be the ideal 
of $R$ generated by
$$ \{ x \in R \mid \pd_{R_{x}}M_{x} < w \} . $$
If $w + 1\le \grade F_{w} $ for all $w$ with $\grade I + 
1 
\leq w \leq \pd_{R}M$, 
then   $M$ is a torsion-free $R/I$ module.
In particular, if $\pd_{R}M \leq 
\grade I$, then $M$ is a torsion-free $(R/I)$-module.
 \qed \end{proposition-no-advance}
\end{chunk}

\begin{chunk} The following statement is well-known; see, for example, \cite[Cor.~6.10]{Ho75}. It follows from the fact that the if $M$ is a perfect module, then the  ideals $F_w$ for $M$ (in the sense \ref{1.25}) and the annihilator of $M$  all have the same radical.

\begin{proposition-no-advance}\label{Ho-6.10} 
Let $A\to R$ be a homomorphism of Noetherian rings and $M$ be a non-zero finitely generated perfect $A$-module. 
If $M\otimes_AR\neq 0$, then 
$$\grade((\ann M)R)=\pd_AM-\max\{i\mid \Tor^A_i(M,R)\neq 0\}. \qed$$\end{proposition-no-advance}

In our favorite applications of \ref{Ho-6.10}, we focus on the complex $\mathbb F\otimes_AR$, where $\mathbb F$ is a resolution of $M$  by projective $A$-modules. We are satisfied with an inequality; and therefore, there is no need for us to assume that $M\otimes_AR\neq 0$. Furthermore, in practice, $A$ is usually a polynomial ring over the ring of integers. We refer to the following statement as depth-sensitivity. Of course, these ideas were first worked out in \cite{EN67,N68,N70,Ho71}, and especially \cite[Cor.~3.1]{Ho74}.

\begin{proposition-no-advance}\label{depth-sensitivity-result}
Let $A\to R$ be a homomorphism of Noetherian rings,  $M$ be a non-zero finitely generated perfect $A$-module, and $\mathbb F$ be a resolution of $M$  by projective $A$-modules with the length of $\mathbb F$ equal to the projective dimension of $M$. Then 
$$\HH_j(\mathbb F\otimes _AR)=0\quad \text{for $\pd_AM -\grade((\ann M)R)+1\le j$}. \qed$$
\end{proposition-no-advance}
 It is worth observing that if $\ann M$ is replaced by any sub-ideal, then the inequality of \ref{depth-sensitivity-result} continues to hold.
\end{chunk}
  
\bigskip
\subsection{Multilinear algebra.}

 \bigskip\begin{chunk} \label{87Not1} 
Our complexes are described in a coordinate-free manner. Let $V$ be a free module of finite rank $d$ over 
$R$.  
We make much use of 
the symmetric algebra $\Sym_\bullet V$, the divided power algebra  $D _{\bullet}(V^*)$, and the exterior algebras $\bigwedge^{\bullet} V$ and $\bigwedge^{\bullet} (V^*)$. We use the fact that 
$D _{\bullet}(V^*)$ is a module over $\Sym_\bullet V$ and the fact that $\bigwedge^{\bullet} V$ and $\bigwedge^{\bullet} (V^*)$ are modules over one another. We also use the fact that these module actions give rise to natural perfect pairings 
$$\ev:\Sym_iV\otimes D_i(V^*)\to R\quad\text{and}\quad \textstyle \ev:\bigwedge^i V\otimes 
\bigwedge^i (V^*)\to R,$$ for each integer $i$. The duals 
$$\ev^*:R\to D_i(V^*)\otimes \Sym_iV\quad\text{and}\quad \textstyle \ev^*:R\to\bigwedge^i (V^*)\otimes 
\bigwedge^i V$$ of the above evaluation maps
are completely independent of coordinates. It follows that, if $\{m_\ell\}$ is a basis for $\Sym_iV$ (or $\bigwedge^i V$) and $\{m_\ell^*\}$ is the corresponding dual basis for $D_i(V^*)$ (or $\bigwedge^i (V^*)$), then the element 
\begin{equation}\label{ev-star}\operatorname{ev}^*(1)=\sum\limits_{\ell} m_\ell^*\otimes m_\ell\in D_i(V^*)\otimes \operatorname{Sym}_iV \quad\text{ (or 
$\textstyle\bigwedge^i (V^*)\otimes 
\bigwedge^i V$)}
\end{equation}
is completely independent of coordinates. These elements will also  be used extensively in our calculations.
\end{chunk}

\begin{subchunk}\label{omega} We emphasize a special case of (\ref{ev-star}). If
  $\omega_{V^*}$ is a basis  for $\bigwedge^{d}(V^*)$ and $\omega_V$ is the corresponding dual basis for $\bigwedge^{d}V$, then the element
$\omega_{V^*}\otimes\omega_V$ is a canonical element of
$\bigwedge^{d}(V^*)\otimes\bigwedge^{d}V$. This element is also used in our calculations.\end{subchunk}

The following   facts about the interaction of the module structures of 
$\bigwedge^\bullet V $ on  $\bigwedge^\bullet(V^*) $ and $\bigwedge^\bullet (V^*) $ on  $\bigwedge^\bullet V $   
 are well known; see \cite[section 1]{BE75}, \cite[Appendix]{BE77},  and \cite[section 1]{Ku93}.

\begin{proposition} \label{A3} Let $V$ be a free module of rank $d$ over a commutative
Noetherian ring $R$ and let $b_r\in \textstyle \bigwedge^{r}V$, $c_p\in \textstyle \bigwedge^{p}V$, and $\alpha_q\in\textstyle \bigwedge^{q}(V^{*})$. 
\begin{enumerate}[\rm(a)]

\item\label{A3.a} If $r =1$, then 
$ \fakeht(b_r(\alpha_q))(c_p)=b_r\wedge (\alpha_q(c_p))+(-1)^{1+q}\alpha_q(b_r\wedge c_p)$. 
\item\label{A3.b} If $q=d$, then
$\fakeht(b_r(\alpha_q))(c_p)=(-1)^{(d -r)(d-p)}(c_p(\alpha_q))(b_r)$. \item\label{A3.c} If $p=d$, then $[b_r(\alpha_q)](c_p)=\fakeht b_r\wedge \alpha_q(c_p)$.
 \item\label{A3.d} If $\Psi:V\to V'$ is a homomorphism of free $R$-modules and $\delta_{s+r}\in \textstyle \bigwedge^{s+r}({V'}^*)$, then 
$$(\textstyle \bigwedge^{s}\Psi^{*})[((\textstyle \bigwedge^{r}\Psi)(b_{r}))(\delta_{s+r})]=
b_{r}[(\textstyle \bigwedge^{s+r}\Psi^{*})(\delta_{s+r})]. \qed$$  \end{enumerate}\end{proposition}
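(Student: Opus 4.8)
The plan is to fix, once and for all, the conventions for the two contraction actions and to isolate the three structural facts that drive everything: (i) each contraction is adjoint to wedge multiplication under the evaluation pairing $\ev$ of \ref{87Not1}, up to an explicit sign; (ii) for a single vector $b_1\in V$ (resp.\ covector $\alpha_1\in V^*$) contraction is an antiderivation of the exterior algebra on which it acts; and (iii) $\bigwedge^k\Psi$ and $\bigwedge^k\Psi^*$ are mutually adjoint under the evaluation pairings attached to $V$ and $V'$. Because $V$ is free of finite rank, each of (a)--(c) is an $R$-multilinear identity in $b_r$, $c_p$, $\alpha_q$, so it suffices to verify it for decomposable elements --- wedge products of members of a fixed basis of $V$ and of the dual basis of $V^*$ --- whereupon it reduces to a bounded sign computation; part (d) I will instead deduce formally from (i) and (iii) (a basis computation also works).

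For (a) I would induct on $q$. The base case $q=1$ is (ii) in the form $\alpha_1(b_1\wedge c_p)=\alpha_1(b_1)\,c_p-b_1\wedge\alpha_1(c_p)$, immediate from the defining formula for contraction by one covector; rearranging this gives (a) with $q=1$. For the inductive step, factor $\alpha_q=\alpha_1\wedge\alpha_{q-1}$, expand $b_1(\alpha_1\wedge\alpha_{q-1})$ by the antiderivation property of $b_1$ on $\bigwedge^\bullet(V^*)$, expand $\alpha_q(c_p)=\alpha_1\big(\alpha_{q-1}(c_p)\big)$ by associativity of the module action, invoke the induction hypothesis on $\alpha_{q-1}$, and collect terms; the sign $(-1)^{1+q}$ emerges from the interplay of the antiderivation signs with the drop in covector degree.

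Parts (b) and (c) are the top-degree statements. In (c), $c_d$ is a basis element $\omega_V$ of $\bigwedge^d V$, the map $\alpha\mapsto\alpha(\omega_V)$ is (up to the scalar fixing $\omega_V$) a Hodge-type isomorphism $\bigwedge^q(V^*)\to\bigwedge^{d-q}V$, and (c) says this isomorphism carries contraction by $b_r$ on the source to wedge multiplication by $b_r$ on the target; with $b_r$ and $\alpha_q$ decomposable, both sides collapse to a single basis monomial of $\bigwedge^{d-q+r}V$ whose sign is determined by how the index sets of $b_r$ and $\alpha_q$ sit inside $\{1,\dots,d\}$, and the two signs agree after a short permutation count. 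In (b), $\alpha_d$ is a basis element $\omega_{V^*}$ of $\bigwedge^d(V^*)$; now both $b_r\mapsto b_r(\omega_{V^*})$ and $c_p\mapsto c_p(\omega_{V^*})$ are Hodge-type isomorphisms, and (b) asserts that pairing $b_r$ against $c_p$ through $\omega_{V^*}$ in either order agrees up to $(-1)^{(d-r)(d-p)}$; reducing to decomposable $b_r$, $c_p$, both sides land on the same basis monomial of $\bigwedge^{r+p-d}V$, and the displayed sign is exactly the cost of moving a block of size $d-r$ past a block of size $d-p$. This last sign computation is, I expect, the one genuinely fiddly point in the proposition; it is worth doing carefully once, and it also subsumes the sign needed for (c).

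Finally, (d) is naturality of contraction along $\Psi$, and the cleanest argument uses (i) and (iii). Pair the left-hand side $(\bigwedge^s\Psi^*)\big[((\bigwedge^r\Psi)(b_r))(\delta_{s+r})\big]$ against an arbitrary $e\in\bigwedge^s V$ under $\ev_V$: by (iii) this equals $\ev_{V'}$ of $((\bigwedge^r\Psi)(b_r))(\delta_{s+r})$ against $(\bigwedge^s\Psi)(e)$; by (i) it becomes, up to a sign $\epsilon$ depending only on the two degrees involved, $\ev_{V'}$ of $\delta_{s+r}$ against $(\bigwedge^r\Psi)(b_r)\wedge(\bigwedge^s\Psi)(e)=(\bigwedge^{s+r}\Psi)(b_r\wedge e)$; by (iii) again it becomes $\epsilon\cdot\ev_V$ of $(\bigwedge^{s+r}\Psi^*)(\delta_{s+r})$ against $b_r\wedge e$; and a final application of (i), at the same pair of degrees, reinstates $\epsilon$ and yields $\ev_V$ of $b_r\big[(\bigwedge^{s+r}\Psi^*)(\delta_{s+r})\big]$ against $e$. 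Since $\ev_V$ is a perfect pairing and $e$ is arbitrary, (d) follows; alternatively one verifies (d) directly on decomposable $b_r$ and $\delta_{s+r}$ using only the functoriality of exterior powers. In sum, I would establish (ii) and (i) first, prove (a) by the above induction, handle (b) and (c) by the decomposable-element sign computation (the main obstacle being the sign in (b)), and obtain (d) from the adjunction chain just sketched.
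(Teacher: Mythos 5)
The paper offers no proof of Proposition \ref{A3}: it is stated with a terminal $\qed$ and a pointer to the literature (\cite[\S 1]{BE75}, \cite[Appendix]{BE77}, \cite[\S 1]{Ku93}), so there is nothing internal to compare against. Your proposal is a genuine proof of these standard identities, and it is correct in outline and, as far as I have checked the details, correct in fact. The reduction to decomposable elements is legitimate because each statement is $R$-multilinear in the data and $V$ is free of finite rank; the base case and inductive step for (a) close up cleanly once one fixes the sign convention for the two antiderivation actions, with the $(-1)^{1+q}$ emerging exactly as you describe; the block-transposition count gives the sign in (b); and the chain of adjunctions for (d) is the cleanest route, with the crucial observation being that the sign $\epsilon$ in the contraction--wedge adjunction depends only on the pair of degrees $(r,s)$, so it cancels when the adjunction is invoked once in $V'$ and once in $V$.

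Two small remarks that would tighten the write-up. First, (c) can be deduced from (a) with almost no additional work: when $p=d$ the term $\alpha_q(b_1\wedge c_d)$ vanishes because $b_1\wedge c_d\in\bigwedge^{d+1}V=0$, which gives (c) for $r=1$, and then associativity of the module action ($b_r(\alpha_q)=b_1(b_{r-1}(\alpha_q))$ for $b_r=b_1\wedge b_{r-1}$) yields the general case by induction on $r$; this spares you the second basis/sign computation. Second, in the inductive step for (a) it is worth saying explicitly that both sides of the identity are $R$-linear in $\alpha_q$, so it suffices to prove it for decomposable $\alpha_q=\alpha_1\wedge\alpha_{q-1}$ --- you rely on this tacitly when you factor $\alpha_q$, and a reader could otherwise worry that the factoring step loses generality. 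With those two polish items, the argument is a perfectly serviceable self-contained proof of facts the paper takes as known.
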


\bigskip
The next result is an application of Proposition~\ref{A3} and the ideas of \ref{87Not1}. The result is used in the proof of Observation~\ref{ann(H0)}.

\begin{proposition}\label{use-in-5.10}   
Let $V$ be a free module of rank $d$ over a commutative
Noetherian ring $R$ and let $b_r\in \textstyle \bigwedge^{r}V$, $c_p\in \textstyle \bigwedge^{p}V$, and $\alpha_q\in\textstyle \bigwedge^{q}(V^{*})$. Then $b_r\wedge (\alpha_q(c_p))$ is equal to a sum of elements of the form
$b'_{r'}\wedge (\alpha'_{q'}(c_p))$
where $$b'_{r'}\in \textstyle \bigwedge^{r'}V,\quad  \alpha'_{q'}\in\textstyle \bigwedge^{q'}(V^{*}),\quad r'-q'=r-q,\quad\text{ and }\quad q'\le q+d-r-p.$$
\end{proposition}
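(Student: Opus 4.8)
The plan is to deduce the statement from two standard bilinear identities for the exterior-algebra module structures — both consequences of Proposition~\ref{A3}(a) — and then to read off the bound on $q'$ from an elementary dimension count.

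First I would record the following. \emph{(i)} Let $\Delta$ denote the comultiplication of $\bigwedge^{\bullet}V$, so that for $b_r\in\bigwedge^{r}V$ we may write $\Delta b_r=\sum b_r'\otimes b_r''$ with $\deg b_r'+\deg b_r''=r$ in each summand. Then
$$b_r\wedge\bigl(\alpha_q(c_p)\bigr)=\sum\ \pm\,\bigl(b_r'(\alpha_q)\bigr)\bigl(b_r''\wedge c_p\bigr).$$
For decomposable $b_r=v_1\wedge\dots\wedge v_r$ this reads $\sum_{S\subseteq\{1,\dots,r\}}\pm\,(b_S(\alpha_q))(b_{S^c}\wedge c_p)$ with $b_S$ the wedge of the $v_i$, $i\in S$; the case $r=1$ is precisely Proposition~\ref{A3}(a), and the general case is a routine induction on $r$, peeling one vector at a time. \emph{(ii)} For $\mu\in\bigwedge^{\bullet}(V^*)$ and $x,y\in\bigwedge^{\bullet}V$, writing $\Delta\mu=\sum\mu'\otimes\mu''$ for the comultiplication of $\bigwedge^{\bullet}(V^*)$,
$$\mu(x\wedge y)=\sum\ \pm\,\bigl(\mu'(x)\bigr)\wedge\bigl(\mu''(y)\bigr);$$
the one-form, one-vector instance is again Proposition~\ref{A3}(a), and the general case is a routine induction. (Both identities are, in any case, among those collected in the references cited just before Proposition~\ref{A3}.)

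The rest is bookkeeping. In (i), a summand with $\deg b_r''=k$ and $k+p>d$ vanishes, because $b_r''\wedge c_p\in\bigwedge^{k+p}V=0$; hence only summands with $k\le d-p$, equivalently $\deg b_r'=r-k\ge r+p-d$, contribute, and for these the form $\mu:=b_r'(\alpha_q)\in\bigwedge^{q-\deg b_r'}(V^*)$ has $\deg\mu=q-\deg b_r'\le q+d-r-p$. Feeding each surviving term $\mu(b_r''\wedge c_p)$ into (ii) with $x=b_r''$, $y=c_p$, we get a sum of elements $\bigl(\mu'(b_r'')\bigr)\wedge\bigl(\mu''(c_p)\bigr)$, each literally of the shape $b'_{r'}\wedge\bigl(\alpha'_{q'}(c_p)\bigr)$ with $b'_{r'}=\pm\mu'(b_r'')$ and $\alpha'_{q'}=\mu''$. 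From $\deg\mu'+\deg\mu''=\deg\mu$ one gets $q'=\deg\mu''\le\deg\mu\le q+d-r-p$; and $r'-q'=\bigl(\deg b_r''-\deg\mu'\bigr)-\bigl(\deg\mu-\deg\mu'\bigr)=\deg b_r''-\deg\mu=\deg b_r''+\deg b_r'-q=r-q$. Summing over the surviving coproduct terms of $b_r$ and over the coproduct of each $\mu$ exhibits $b_r\wedge(\alpha_q(c_p))$ in the asserted form. (The bound is vacuous when $r+p\le d$; the same computation handles that case, with every summand surviving and $\deg\mu\le q$.)

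The main obstacle is simply getting identities (i) and (ii) exactly right with their Koszul signs; each is a mechanical induction off of Proposition~\ref{A3}(a), and the precise signs are irrelevant to the conclusion, as they can be absorbed into the coefficients $b'_{r'}$. I would expect the sign bookkeeping in (i) to be the fussiest part of the write-up, but there is no conceptual difficulty there.
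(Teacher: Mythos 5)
Your proof is correct, but it is organized quite differently from the one in the paper, so a comparison is worthwhile. The paper's proof never invokes the full coproduct identities: instead it inserts the canonical element $\operatorname{ev}^*(1)=\sum_\ell m_\ell^*\otimes m_\ell\in\bigwedge^{d-p}(V^*)\otimes\bigwedge^{d-p}V$ to write $c_p=\sum_\ell m_\ell^*(m_\ell\wedge c_p)$, thereby lifting the argument of the contraction to top degree $\bigwedge^dV$, where Proposition~\ref{A3}.\ref{A3.c} converts $b_r\wedge(\alpha_q\wedge m_\ell^*)(\cdot)$ into $[b_r(\alpha_q\wedge m_\ell^*)](\cdot)$; it then peels the vectors of $m_\ell$ back out one at a time by iterating Proposition~\ref{A3}.\ref{A3.a}, arriving at terms $x\wedge[(x'\wedge b_r)(\alpha_q\wedge m_\ell^*)](c_p)$ with $\deg x+\deg x'=d-p$, from which the bound $q'\le q+d-p-r$ is read off. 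You instead expand directly via the two Cartan--Leibniz formulas (the coproduct of $b_r$ acting through $\alpha_q$, then the coproduct of the resulting form acting on $b_r''\wedge c_p$) and obtain the bound from the vanishing of $b_r''\wedge c_p$ when $\deg b_r''+p>d$; your degree bookkeeping ($q'\le\deg\mu\le q+d-r-p$ and $r'-q'=r-q$) checks out. The trade-off is that your route requires establishing, or citing, the general coproduct identities (i) and (ii) with their signs -- each is an induction off Proposition~\ref{A3}.\ref{A3.a} and its dual, and as you note the signs are immaterial here since they can be absorbed into $b'_{r'}$ -- whereas the paper's route gets by with only the three specific identities of Proposition~\ref{A3} at the cost of the less transparent detour through top degree. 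Your vanishing argument also makes it clearer \emph{why} the bound $q'\le q+d-r-p$ is the right one; the two proofs are equally rigorous.
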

\begin{Remark} The element $c_p$ has not been changed; but an upper bound has been imposed on the degree of the $\bigwedge^\bullet (V^*)$ contribution to the expression. Of course, the assertion is only interesting when $d< r+p$\end{Remark}
\begin{proof}Let $\operatorname{ev}^*(1)=\sum_\ell m_{\ell}^*\otimes m_\ell\in \bigwedge^{d-p}(V^*)\otimes \bigwedge^{d-p}V$, as described in (\ref{ev-star}). Notice that 
\begin{equation}\label{NT}\sum_\ell m_\ell^*(m_\ell\wedge c_p)=c_p.\end{equation} To establish (\ref{NT}), it suffices to test the proposed equation after multiplying both sides on the left by an arbitrary element $x_{d-p}$ of $\bigwedge^{d-p}F$. The left side becomes
\begingroup\allowdisplaybreaks\begin{align*}\sum_\ell x_{d-p}\wedge m_\ell^*(m_\ell\wedge c_p)&=\sum_\ell  [x_{d-p}(m_\ell^*)](m_\ell\wedge c_p)&&\text{by \ref{A3}.\ref{A3.c}}\\
&=\Big(\sum_\ell  [x_{d-p}(m_\ell^*)]\cdot m_\ell\Big)\wedge c_p\\
&=x_{d-p}\wedge c_p,\end{align*}\endgroup as desired. Apply (\ref{NT}) and \ref{A3}.\ref{A3.c} to see that
$$b_r\wedge \alpha_q(c_p)=\sum_\ell b_r\wedge (\alpha_q\wedge m_{\ell}^*)(m_\ell\wedge c_p)=\sum_\ell  [b_r(\alpha_q\wedge m_{\ell}^*)](m_\ell\wedge c_p).$$Each $m_\ell$ is a sum of elements of the form $v_1\wedge \cdots\wedge v_{d-p}$with $v_i\in V$. Apply \ref{A3}.\ref{A3.a} numerous times to write
\begingroup\allowdisplaybreaks\begin{align*}[b_r(\alpha_q \wedge m_\ell^*)](v_1\wedge \cdots\wedge v_{d-p}\wedge c_p)&=\begin{cases}
\pm v_1\wedge [b_r(\alpha_q \wedge m_\ell^*)](v_2\wedge \cdots\wedge v_{d-p}\wedge c_p)\\\pm  [(v_1\wedge b_r)(\alpha_q \wedge m_\ell^*)](v_2\wedge \cdots\wedge v_{d-p}\wedge c_p)\end{cases}\\
&=\begin{cases}
\pm v_2\wedge v_1\wedge [b_r(\alpha_q \wedge m_\ell^*)](v_3\wedge \cdots\wedge v_{d-p}\wedge c_p)
\\
\pm v_1\wedge [(v_2\wedge b_r)(\alpha_q \wedge m_\ell^*)](v_3\wedge \cdots\wedge v_{d-p}\wedge c_p)
\\
\pm v_2\wedge [(v_1\wedge b_r)(\alpha_q \wedge m_\ell^*)](v_3\wedge \cdots\wedge v_{d-p}\wedge c_p)
\\
\pm [(v_2\wedge v_1\wedge b_r)(\alpha_q \wedge m_\ell^*)](v_3\wedge \cdots\wedge v_{d-p}\wedge c_p)
\end{cases}\\&=\cdots \ .\end{align*}\endgroup We continue is this manner and express $b_r\wedge \alpha_q(c_p)$ as a sum of elements of the form 
$$x\wedge[(x'\wedge b_r)(\alpha_q\wedge m_\ell^*)](c_p),$$ where $x$ and $x'$ are homogeneous elements of $\bigwedge^\bullet F$ and $\deg x+\deg x'=d-p$. The proof is complete because 
$$\deg [(x'\wedge b_r)(\alpha_q\wedge m_\ell^*)]=q+d-p-\deg x'-r\le q+d-p-r.$$
\vskip-18pt\end{proof}

\bigskip

\section{The classical generalized Eagon-Northcott complexes.}

\bigskip We recall the classical generalized Eagon-Northcott complexes
$\{\mathcal C^i_\Phi\mid i\in \mathbb Z\}$ which were introduced at (\ref{Phi}).   

\begin{definition}\label{FFR} 
Let $R$ be a commutative Noetherian ring, $F$ and $G$ be free $R$-modules of rank $\goth f$ and $\goth g$, respectively, with $\goth g\le \goth f$, $\Phi:G^*\to F$ be an $R$-module homomorphism,    and $i$  be an integer.
\begin{enumerate}[\rm(a)]
\item The complex 
$\mathcal C_{\Phi}^{i}$
is
\begin{align}\label{all-of-Ci}\textstyle \cdots \xrightarrow{\eta_\Phi} \bigwedge ^{\goth f-\goth g-i-2}F\otimes D_2 (G^*) \xrightarrow{\eta_\Phi} \bigwedge ^{\goth f-\goth g-i-1}F\otimes D_1 (G^*) \xrightarrow{\eta_\Phi} \bigwedge ^{\goth f-\goth g-i}F\otimes D_0(G^*)\vspace{5pt} \\\textstyle
\xrightarrow{\bigwedge^{\goth g}\Phi} \bigwedge ^{\goth f-i}F
\otimes \bigwedge^\goth g G\otimes \Sym_0G \xrightarrow{\Kos_\Phi} 
\bigwedge ^{\goth f-i+1}F
\otimes \bigwedge^\goth g G\otimes \Sym_1 G \xrightarrow{\Kos_\Phi} \cdots,\notag\end{align}
with $\bigwedge ^{\goth f-\goth g-i}F\otimes D_0(G^*)$ in position $i+1$. In particular, if $j$ is an integer, then the module $(\mathcal C^{i}_\Phi)_j$ is 
\begin{equation}\label{Ci-spot}(\mathcal C^{i}_\Phi)_j=\begin{cases}
\bigwedge^{\goth f-j}F\otimes \bigwedge ^{\goth g}G\otimes \Sym_{i-j}G,&\text{if $j\le i$, and}\\ 
\bigwedge^{\goth f-\goth g-j+1}F\otimes D_{j-i-1}(G^*),&\text{if $i+1\le j$.}\end{cases}\end{equation}
\item The maps $\eta_\Phi$ and $\Kos_\Phi$ may be found in Definition~\ref{8mathbb-E}.\ref{8mathbb-E.a}. 
\item The map $\bigwedge^aF\otimes D_0(G^*)\xrightarrow{\bigwedge^\goth g\Phi}\bigwedge^{a+\goth g}F\otimes \bigwedge^\goth gG\otimes \Sym_0G$ is
$$\textstyle f_a\mapsto f_a\wedge (\bigwedge^\goth g \Phi) (\omega_{G^*})\otimes \omega_G,$$ where $\omega_{G^*}\otimes \omega_{G}$ is the canonical element of $\bigwedge^\goth g (G^*)\otimes \bigwedge^\goth g G$ from (\ref{omega}).
\end{enumerate}
\end{definition}

\begin{example}\label{2.1} Adopt the notation of {\rm\ref{FFR}}. 
We record the classical generalized Eagon Northcott complexes $\mathcal C_{\Phi}^{i}$ which have the form:
$$0\to (\mathcal C_{\Phi}^{i})_{\goth f-\goth g+1}\to (\mathcal C_{\Phi}^{i})_{\goth f-\goth g}\to \dots\to (\mathcal C_{\Phi}^{i})_2\to (\mathcal C_{\Phi}^{i})_1
\to (\mathcal C_{\Phi}^{i})_0\to 0.$$Of course, these complexes have length 
$\goth f-\goth g+1$. The corresponding indices $i$ are given in 
(\ref{ClGEN}): 
\begingroup\allowdisplaybreaks
\begin{align}
\textstyle \mathcal C_{\Phi}^{-1}:\ &\textstyle  0\to \bigwedge^0F\otimes D_{\goth f-\goth g+1}(G^*)\xrightarrow{\eta_\Phi}\bigwedge^{1}F\otimes D_{\goth f-\goth g}(G^*)\xrightarrow{\eta_\Phi}\dots \xrightarrow{\eta_\Phi} \bigwedge^{\goth f-\goth g+1}F\otimes D_{0}(G^*)\notag \\ &\to 0,\notag\\
\textstyle \mathcal C_{\Phi}^{0}:\ &\textstyle  0\to \bigwedge^0F\otimes D_{\goth f-\goth g}(G^*)\xrightarrow{\eta_\Phi}\bigwedge^{1}F\otimes D_{\goth f-\goth g-1}(G^*)\xrightarrow{\eta_\Phi}\dots \xrightarrow{\eta_\Phi} \bigwedge^{\goth f-\goth g}F\otimes D_{0}(G^*)
\notag\\&\textstyle\xrightarrow{\bigwedge^\goth g\Phi}   \bigwedge^{\goth f}F\otimes\bigwedge^\goth g G\otimes \Sym_{0}G \to 0,\notag\\
\textstyle \mathcal C_{\Phi}^{1}:\ &\textstyle  0\to \bigwedge^0F\otimes D_{\goth f-\goth g-1}(G^*)\xrightarrow{\eta_\Phi}\bigwedge^{1}F\otimes D_{\goth f-\goth g-2}(G^*)\xrightarrow{\eta_\Phi}\dots \xrightarrow{\eta_\Phi} \bigwedge^{\goth f-\goth g-1}F\otimes D_{0}(G^*)
\notag\\&\textstyle\xrightarrow{\bigwedge^\goth g\Phi}   \bigwedge^{\goth f-1}F\otimes\bigwedge^\goth g G\otimes \Sym_{0}G
\xrightarrow{\Kos_\Phi}\bigwedge^{\goth f}F\otimes\bigwedge^\goth g G\otimes \Sym_{1}G \to 0,\notag\\
\textstyle \mathcal C_{\Phi}^{2}:\ &\textstyle  0\to \bigwedge^0F\otimes D_{\goth f-\goth g-2}(G^*)\xrightarrow{\eta_\Phi}\bigwedge^{1}F\otimes D_{\goth f-\goth g-3}(G^*)\xrightarrow{\eta_\Phi}\dots \xrightarrow{\eta_\Phi} \bigwedge^{\goth f-\goth g-2}F\otimes D_{0}(G^*)
\notag\\&\textstyle\xrightarrow{\bigwedge^\goth g\Phi}   \bigwedge^{\goth f-2}F\otimes\bigwedge^\goth g G\otimes \Sym_{0}G
\xrightarrow{\Kos_\Phi}\bigwedge^{\goth f-1}F\otimes\bigwedge^\goth g G\otimes \Sym_{1}G \notag\\&\textstyle\xrightarrow{\Kos_\Phi}\bigwedge^{\goth f}F\otimes\bigwedge^\goth g G\otimes \Sym_{2}G \to 0,\notag\\
&\hskip2in \vdots\notag\\
\textstyle \mathcal C_{\Phi}^{\goth f-\goth g-1}:\ &\textstyle  0\to \bigwedge^0F\otimes D_{1}(G^*)\xrightarrow{\eta_\Phi}\bigwedge^{1}F\otimes D_{0}(G^*)\notag\\
&\textstyle\xrightarrow{\bigwedge^g\Phi}   \bigwedge^{\goth g+1}F\otimes\bigwedge^\goth g G\otimes \Sym_{0}G
\xrightarrow{\Kos_\Phi}\bigwedge^{\goth f-\goth g-2}F\otimes\bigwedge^\goth g G\otimes \Sym_{1}G \notag\\&\textstyle\xrightarrow{\Kos_\Phi}\dots\xrightarrow{\Kos_\Phi}\bigwedge^{\goth f}F\otimes\bigwedge^\goth g G\otimes \Sym_{\goth f-\goth g-1}G \to 0,\notag\\
\textstyle \mathcal C_{\Phi}^{\goth f-\goth g}:\ &\textstyle  0\to \bigwedge^0(F^*)\otimes D_{0}(G^*)
\notag\\
&\textstyle\xrightarrow{\bigwedge^\goth g\Phi}   \bigwedge^{\goth g}F\otimes\bigwedge^\goth g G\otimes \Sym_{0}G
\xrightarrow{\Kos_\Phi}\bigwedge^{\goth g+1}F\otimes\bigwedge^\goth g G\otimes \Sym_{1}G \notag\\&\textstyle\xrightarrow{\Kos_\Phi}\dots\xrightarrow{\Kos_\Phi}\bigwedge^{\goth f}F\otimes\bigwedge^\goth g G\otimes \Sym_{\goth f-\goth g}G \to 0,\text{ and}\notag\\
\textstyle \mathcal C_{\Phi}^{\goth f-\goth g+1}:\ &\textstyle  0\to \bigwedge^{\goth g-1}F\otimes\bigwedge^\goth g G\otimes  \Sym_{0}G
\xrightarrow{\Kos_\Phi}   \bigwedge^{\goth g}F\otimes\bigwedge^\goth g G\otimes \Sym_{1}G\notag\\&\textstyle
\xrightarrow{\Kos_\Phi}\dots\xrightarrow{\Kos_\Phi}\bigwedge^{\goth f}F\otimes\bigwedge^\goth g G\otimes \Sym_{\goth f-\goth g+1}G\to 0. \notag  
\end{align}\endgroup
The maps $\eta_\Phi$ and $\Kos_\Phi$ may be found in Definition~\ref{8mathbb-E}.\ref{8mathbb-E.a}.
\end{example}

\bigskip

\section{Schur and Weyl modules which correspond to hooks.}

\bigskip
The modules which comprise the complexes $\{\mathcal C^{i,a}_\Phi\}$ are Schur and Weyl modules which correspond to hooks. We recall some of the elementary properties of these modules. 
\begin{chunk}\label{2.3}
Let $V$ be a non-zero free module of rank $d$ over the commutative Noetherian ring $R$ and let $a$ and $b$ be integers. Define the $R$-module homomorphisms 
$${\alignedat1&\kappa_{b}^{a }:\  \textstyle{\bigwedge} ^aV\otimes  \operatorname {Sym} _bV\to \textstyle{\bigwedge} ^{a-1}V\otimes  \operatorname {Sym} _{b+1}V
\quad \text{and}\\\vspace{5pt} &\eta_{b}^{a }:\ \textstyle{\bigwedge} ^aV\otimes  D _bV\to \textstyle{\bigwedge} ^{a+1}V\otimes  D _{b-1}V\endalignedat}$$ to be the compositions
\begin{align}\label{co-m1}\textstyle\textstyle{\bigwedge} ^aV\otimes  \operatorname {Sym} _bV\xrightarrow{\ev^*(1)}&\textstyle
(V^*\otimes V)\otimes {\bigwedge} ^{a}V\otimes \operatorname {Sym} _{b}V\\\textstyle\xrightarrow{\text{rearrange}}&\textstyle
(V^*\otimes {\bigwedge} ^{a}V)\otimes (V\otimes \operatorname {Sym} _{b}V)\notag\\\textstyle
\xrightarrow{\modact\otimes \text{mult}}& \textstyle{\bigwedge} ^{a-1}V\otimes  \operatorname {Sym} _{b+1}V &\textstyle\quad\text{and}\notag\\
\label{co-m2} \textstyle{\bigwedge} ^aV\otimes  D _bV\xrightarrow{\ev^*(1)} &\textstyle{\bigwedge} ^{a}V\otimes (V\otimes V^*)\otimes D _{b}V\\\textstyle\notag
\xrightarrow{\text{regroup}} &
(\textstyle{\bigwedge} ^{a}V\otimes V)\otimes (V^*\otimes D _{b}V)\\\textstyle\notag
\xrightarrow{\mult\otimes\modact}& 
\textstyle{\bigwedge} ^{a+1}V\otimes  D _{b-1}V,&
\end{align} respectively. The map $\ev^*(1)$ is discussed in (\ref{ev-star}), ``$\mult$'' is multiplication in the symmetric algebra or the exterior algebra, and ``$\modact$'' is the module action of $\bigwedge^\bullet (V^*)$ on $\bigwedge^\bullet V$ or $\Sym_\bullet (V^*)$ on $D_\bullet V$. Define the $R$-modules
$$L^{a }_{b}V=\ker \kappa_{b}^{a }\quad \text{and}\quad  K^{a }_{b}V=\ker \eta_{b}^{a }.$$ 
In the future, we will often write 
$\kappa$ and $\eta$ in place of $\kappa_{b}^{a }$ and $\eta_{b}^{a }$. 
\end{chunk}

\begin{chunk}
The $R$-modules $L^{a }_{b}V$ and $K^{a }_{b}V$  have been used by many authors in many contexts. In particular, they are studied extensively in \cite{BE75}; although our indexing conventions are different than the conventions of \cite{BE75}; that is,
$$\text{the module we call $L^{a }_{b}V$ is called ${L_b}^{a+1}V$ in \cite{BE75}.}$$\end{chunk}

\begin{chunk}\label{4.6}The modules $L^{a }_{b}V$ and $K^{a }_{b} V$ may also be thought of as the Schur modules $L_\lambda V$ and Weyl modules $K_\lambda V$ which correspond to certain hooks $\lambda$. We use the notation of Examples 2.1.3.h and 2.1.17.h in Weyman \cite{W} to see that 
the module we call $L^{a }_{b}V$ is also the Schur module $L_{(a+1,1^{b-1})}V$ and the module we call  $K^{a }_b V$ is also the  Weyl module $K _{(b+1,1^{a-1})} V$. 
\end{chunk}

\begin{chunk}The complex 

\begin{align}\label{KOS} 0\to \textstyle{\bigwedge} ^dV\otimes  \operatorname {Sym} _{c-d}V\xrightarrow{\kappa} \textstyle{\bigwedge} ^{d-1}V\otimes  \operatorname {Sym} _{c-d+1}V\xrightarrow{\kappa}\\\vspace{5pt}  
\cdots \xrightarrow{\kappa} \textstyle{\bigwedge} ^1V\otimes  \operatorname {Sym} _{c-1}V\xrightarrow{\kappa}\textstyle{\bigwedge} ^0V\otimes  \operatorname {Sym} _{c}V\to 0,&\notag\end{align}
 which is a homogeneous strand of an acyclic  Koszul complex, is split exact for all non-zero integers $c$; hence, $L^{a }_{b}V$ is a projective   $R$-module. In fact, $L^{a }_{b}V$ is a free $R$-module of rank 
$$\operatorname{rank}  L^{a }_{b}V=\binom{d+b-1}{a+b}\binom{a+b-1}a;$$see \cite[Prop. 2.5]{BE75}.
Similarly, $K^a_{b}V$ is a  free $R$-module of rank
$$\operatorname{rank} K^a_{b}V=\binom{d+b}{a+b}\binom{a+b-1}{b}.
$$
The perfect pairing 
$$\left(\vphantom{E^{E^E}}{\textstyle \bigwedge} ^aV\otimes \operatorname {Sym}_b V\right)\otimes  \left(\vphantom{E^{E^E}}{\textstyle \bigwedge} ^{a}(V^*)\otimes D_b (V^*)\right)\to R,$$
induces a perfect pairing 
\begin{equation}\label{p207}L^{a }_{b}V\otimes K^{a+1 }_{b-1}(V^*) \to R,\quad\text{provided $(a,b)\neq (0,0)$ or $(-1,1)$}.\end{equation}
\end{chunk}

\bigskip
Each assertion in Observation~\ref{2.8} is obvious; but it is very convenient to have all of these facts gathered in one place.

\begin{observation}\label{2.8}Let $R$ be a commutative Noetherian ring, $V$ be a free $R$-module of rank $d$, and $\ell$ be an integer. Then the following modules are canonically isomorphic{\rm:}
\begin{enumerate}[\rm(a)]
\item\label{2.8.c} $L^0_\ell V=\Sym_\ell V$,
\item\label{2.8.b} $L_{\ell}^{d-1} V\cong\bigwedge^{d}  V\otimes \Sym_{\ell-1}  V$, provided $\ell+d\neq 1$,
\item\label{2.8.e} $L^d_\ell V=0$, provided $\ell+d\neq 0$,
\item\label{2.8.i} $L_0^\ell V=0$, provided $\ell\neq 0$,
\item\label{2.8.g} $L_1^\ell V\cong\bigwedge^{\ell+1} V$, provided $\ell\neq -1$,
\item\label{2.8.f} $K^0_\ell V=0$, provided $\ell\neq 0$,
\item\label{2.8.a} $K_{\ell}^1 V\cong D_{\ell+1} V$, provided $\ell\neq -1$,
\item\label{2.8.d} $K^d_\ell V=\bigwedge^d V\otimes D_\ell V$, and
\item\label{2.8.h} $K_0^\ell V=\bigwedge^{\ell} V$.
\end{enumerate}
\end{observation}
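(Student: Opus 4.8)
The goal is to verify the nine canonical isomorphisms collected in Observation~\ref{2.8}. The unifying principle is that each of $L^a_bV$ and $K^a_bV$ is, by definition \ref{2.3}, the kernel of one of the canonical maps $\kappa^a_b$ or $\eta^a_b$ built from $\ev^*(1)$, multiplication, and a module action; so in every case the strategy is to identify that kernel after specializing one of the parameters to an extreme value ($0$, $1$, $d-1$, or $d$). A second, equally important tool is the split-exactness of the Koszul strand (\ref{KOS}) (and its divided-power/Weyl analogue): exactness of such a strand forces the kernel at one spot to equal the image from the next, which one can then read off.

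The plan is to treat the items in small groups. For (\ref{2.8.c}), $L^0_\ell V=\ker\kappa^0_\ell$ and $\kappa^0_\ell$ has target $\bigwedge^{-1}V\otimes\Sym_{\ell+1}V=0$, so the kernel is all of $\bigwedge^0V\otimes\Sym_\ell V=\Sym_\ell V$; dually (\ref{2.8.h}) $K^\ell_0V=\ker\eta^\ell_0$ with target $\bigwedge^{\ell+1}V\otimes D_{-1}V=0$, giving $\bigwedge^\ell V$; and (\ref{2.8.d}) $K^d_\ell V=\ker\eta^d_\ell$ with target $\bigwedge^{d+1}V\otimes D_{\ell-1}V=0$, giving $\bigwedge^dV\otimes D_\ell V$. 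For (\ref{2.8.e}) $L^d_\ell V$ and (\ref{2.8.i}) $L^\ell_0V$ (resp.\ (\ref{2.8.f}) $K^0_\ell V$), the point is that under the stated hypothesis $\ell+d\neq0$ (resp.\ $\ell\neq0$) the relevant Koszul strand (\ref{KOS}) (resp.\ its dual) is split exact, so the map $\kappa$ (resp.\ $\eta$) out of the extreme term is injective, forcing the kernel to vanish; concretely, for $L^d_\ell V$ the term $\bigwedge^dV\otimes\Sym_\ell V$ sits at the left end of (\ref{KOS}) with $c=\ell+d\neq0$, hence $\kappa$ is injective there. For (\ref{2.8.b}), (\ref{2.8.g}), (\ref{2.8.a}): here the kernel is a genuinely nonzero module and one identifies it as a cokernel/kernel one step over. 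For $L^{d-1}_\ell V$, exactness of (\ref{KOS}) (with $c=\ell+d-1\neq0$) identifies $\ker\kappa^{d-1}_\ell$ with $\im\kappa^d_\ell=\bigwedge^dV\otimes\Sym_{\ell-1}V$ (using that $\kappa^d_\ell$ is injective, i.e.\ (\ref{2.8.e}) with parameter $\ell-1$, valid since $(\ell-1)+d\neq0$). Similarly $L^\ell_1V=\ker\kappa^\ell_1\cong\im(\bigwedge^{\ell+1}V\otimes\Sym_0V\to\bigwedge^\ell V\otimes\Sym_1V)\cong\bigwedge^{\ell+1}V$, since the next map into $\bigwedge^{\ell+1}V\otimes\Sym_0V$ is zero and the strand is exact (here $c=\ell+1\neq0$). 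The dual computation in the divided-power complex gives (\ref{2.8.a}): $K^1_\ell V=\ker\eta^1_\ell\cong D_{\ell+1}V$ via exactness of the Weyl analogue of (\ref{KOS}) with the index $\ell+1\neq0$.

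All of these isomorphisms should be checked to be \emph{canonical}, not merely abstract: in each case the identification is through one of the structural maps ($\ev^*(1)$, multiplication, module action) or through the connecting identification provided by exactness of a canonical split complex, so no choice of basis enters. A clean way to organize this is to first record, once, that the Koszul strand (\ref{KOS}) and its $D_\bullet$-dual are split exact for all nonzero values of the total degree $c$ (already stated in the excerpt), and then let every item follow by inspecting which term of which strand $L^a_bV$ or $K^a_bV$ occupies.

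I do not expect any real obstacle here — the statement is explicitly flagged as ``obvious,'' and the content is bookkeeping with the degenerate cases of the Koszul/Weyl strands. The only mild subtlety is keeping the edge hypotheses straight: the exclusions ``$\ell+d\neq1$,'' ``$\ell\neq-1$,'' ``$\ell\neq0$,'' etc., are exactly the conditions needed so that the ambient strand has nonzero total degree and hence is split exact (equivalently, so that the perfect pairing (\ref{p207}) and the rank formulas apply); one must cite them at the right moment rather than uniformly. Beyond that, the proof is a case-by-case unwinding of Definition~\ref{2.3} together with split-exactness of (\ref{KOS}).
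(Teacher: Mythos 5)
Your proposal is correct and follows essentially the same route as the paper: items (\ref{2.8.c}), (\ref{2.8.d}), (\ref{2.8.h}) are read off from the definitions (the target of the relevant $\kappa$ or $\eta$ vanishes), items (\ref{2.8.b}), (\ref{2.8.e}), (\ref{2.8.g}), (\ref{2.8.i}) are immediate from split-exactness of (\ref{KOS}), and items (\ref{2.8.a}), (\ref{2.8.f}) from the dual split-exact strand (\ref{KOS*}). Your handling of the edge hypotheses (translating $\ell+d\neq 1$ to $(\ell-1)+d\neq 0$, etc.) is exactly the bookkeeping the paper leaves implicit.
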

\begin{proof} Assertions (\ref{2.8.c}), (\ref{2.8.d}), and (\ref{2.8.h}) follow from the definitions,
and assertions (\ref{2.8.b}), (\ref{2.8.e}), (\ref{2.8.i}), and  (\ref{2.8.g})  are immediate consequences of 
split exact complex  (\ref{KOS}). 
 If $c$ is non-zero, then we may apply $\Hom(-,R)$ to the split exact complex  (\ref{KOS}) to obtain the split exact complex 
\begin{equation}\notag\textstyle 0\to \bigwedge^0(V^*)\otimes D_c(V^*)\xrightarrow{\eta}  
\bigwedge^1(V^*)\otimes D_{c-1}(V^*)\xrightarrow{\eta}
\dots\xrightarrow{\eta} \bigwedge^d(V^*)\otimes D_{c-d}(V^*)\to 0.\end{equation}
Replace $V^*$ with $V$ in order to see that 
\begin{equation}\label{KOS*}\textstyle 0\to \bigwedge^0V\otimes D_cV\xrightarrow{\eta}  
\bigwedge^1V\otimes D_{c-1}V\xrightarrow{\eta}
\dots\xrightarrow{\eta} \bigwedge^dV\otimes D_{c-d}V\to 0\end{equation}
is also split exact.
Assertions~(\ref{2.8.f}) and (\ref{2.8.a}) are  consequence of (\ref{KOS*}). \end{proof}

\bigskip

\section{The complexes 
$\mathbb K_\Phi$, and $\mathbb L_\Phi$ associated to a homomorphism $\Phi$.}\label{K-and-L}

\bigskip 
The complexes $\mathbb K$ and $\mathbb L$ contain Weyl modules $K^a_b(G^*)$ and Schur modules $L^a_bG$, respectively. The complex $\mathcal C^{i,a}_{\Phi}$, which is the focal point of the present paper, is obtained by concatenating the complexes $\mathbb K\to \bigwedge \to \mathbb L$; see Definition~\ref{CiaPhi}.

\begin{data}\label{8'setup-modified}Let $R$ be a commutative Noetherian ring, $F$ and $G$ be free  $R$-modules of finite rank $\goth f$ and $\goth g$, respectively,
and $\Phi:G^*\to F$ be an $R$-module homomorphism. 
\end{data}

\begin{definition}\label{8mathbb-E}
Adopt Data~\ref{8'setup-modified}. 
\begin{enumerate}[\rm(a)]\item\label{8mathbb-E.a} If $r$ and $q$ are integers, then define the $R$-module homomorphisms 
\begin{align*}\textstyle\eta_\Phi: &\textstyle\bigwedge^rF\otimes D_q(G^*)\to \bigwedge^{r+1}F\otimes D_{q-1}(G^*)&&\text{and}\\ 
\Kos_\Phi: &\textstyle\bigwedge^rF\otimes \Sym_qG\to \bigwedge^{r+1}F\otimes \Sym_{q+1}G
\end{align*} to be the compositions
\begin{align*}\textstyle
\bigwedge^rF\otimes  D_q(G^*)\xrightarrow{1\otimes \ev^*(1)\otimes 1}&\textstyle\bigwedge^rF\otimes G^*\otimes G\otimes  D_q(G^*)\\\xrightarrow{1\otimes \Phi\otimes \modact} &\textstyle\bigwedge^{r}F\otimes \bigwedge^{1}F\otimes  D_{q-1}(G^*)\xrightarrow{\mult\otimes 1}\bigwedge^{r+1}F\otimes  D_{q-1}(G^*)\\\intertext{and}
\textstyle\bigwedge^rF\otimes  \Sym_qG\xrightarrow{1\otimes \ev^*(1)\otimes 1}&\textstyle\bigwedge^rF\otimes G^*\otimes G\otimes  \Sym_qG\\\xrightarrow{1\otimes \Phi\otimes \mult} &\textstyle\bigwedge^{r}F\otimes \bigwedge^{1}F\otimes  \Sym_{q+1}G\xrightarrow{\mult\otimes 1}\bigwedge^{r+1}F\otimes  \Sym_{q+1}G,\end{align*} 
respectively, where $\ev^*(1)$ is described in {\rm(\ref{ev-star})}, $\modact$ is the module action of $\Sym_\bullet G$ on $D_\bullet(G^*)$ and $\mult$ is multiplication in the exterior algebra $\bigwedge^{\bullet} F$ or the symmetric algebra $\Sym_{\bullet}G$.

\item\label{8mathbb-E.c} If $N$ and $p$ are integers, then define  $\mathbb K^{N,p}_{\Phi}$ to be the maps and modules 
$$\textstyle \mathbb K^{N,p}_{\Phi}:\textstyle \  0\to \bigwedge^0F\otimes K^p_N(G^*)\xrightarrow{\eta_\Phi} \bigwedge^1F\otimes K^{p}_{N-1}(G^*) \xrightarrow{\eta_\Phi}  \dots  \xrightarrow{\eta_\Phi} \bigwedge^NF\otimes K^{p}_{0}(G^*)\to 0$$
with  $[\mathbb K^{N,p}_{\Phi}]_j=\bigwedge^{N-j}F\otimes K^{p}_{j}(G^*)$; and 
define  $\mathbb L^{N,p}_{\Phi}$ to be the maps and modules 
$$
\textstyle   0\to \bigwedge^{N+1}F\otimes L_1^pG \xrightarrow{\Kos_\Phi}
\bigwedge^{N+2}F\otimes L_2^pG\xrightarrow{\Kos_\Phi}
\cdots \xrightarrow{\Kos_\Phi}\bigwedge^{\goth f}F\otimes L_{\goth f-N}^pG\to 0,
$$ 
with \begin{equation}\label{cases}[\mathbb L^{N,p}_{\Phi}]_j=\begin{cases}0&\text{if $(p,j)=(0,\goth f-N)$, and}\\\bigwedge^{\goth f-j}F\otimes L_{\goth f-N-j}^pG,&\text{otherwise,}\end{cases}\end{equation} 
where the maps are induced by the homomorphisms $\eta_\Phi$ and $\Kos_\Phi$ of (\ref{8mathbb-E.a}) and the modules $K^p_q(G^*)$ and $L^p_qG$ are defined in (\ref{co-m2}) and (\ref{co-m1}).
\end{enumerate}
\end{definition}

\begin{remarks}\label{5.3}\begin{enumerate}[\rm(a)] \item The map $$\textstyle\eta^a_b:\bigwedge^a(G^*)\otimes D_b(G^*)\to  \bigwedge^{a+1}(G^*)\otimes D_{b-1}(G^*)$$ of (\ref{co-m2}) is equal to the map $$\textstyle\eta_{\id_{G^*}}:\bigwedge^a(G^*)\otimes D_b(G^*)\to  \bigwedge^{a+1}(G^*)\otimes D_{b-1}(G^*)$$ of \ref{8mathbb-E}.\ref{8mathbb-E.a}. \item The map $$\textstyle\kappa^a_b: \bigwedge^aG\otimes \Sym_aG\to \bigwedge^{a-1}G\otimes \Sym_{b+1}G$$ of (\ref{co-m1}) is related to the map $\Kos_{\id_{G^*}}$ of \ref{8mathbb-E}.\ref{8mathbb-E.a} by way of the following commutative diagram:
$$\xymatrix{\bigwedge^{\goth g-a}G^*\otimes \Sym_bG\otimes \bigwedge ^\goth g G\ar[d]_{\modact}^{\cong}\ar[rrr]^{(-1)^{\goth g-a}\Kos_{\id_{G^*}}}&&& \bigwedge^{\goth g-a+1}G^*\otimes \Sym_{b+1}G\otimes \bigwedge ^\goth g G\ar[d]^{\modact}_{\cong}\\
\bigwedge^aG\otimes \Sym_bG\ar[rrr]^{\kappa^a_b}&&& \bigwedge^{a-1}G\otimes \Sym_{b+1}G.}
$$

\item\label{5.3.c} The maps and modules of $\mathbb K^{N,p}_\Phi$ and $\mathbb L^{N,p}_\Phi$ form complexes because the diagrams
{\footnotesize{$$
\xymatrix{0\to\bigwedge^rF\otimes K^p_q(G^*)\ar@{^(->}[r]\ar@{-->}[d]^{\eta_\Phi}&\bigwedge^rF\otimes \bigwedge^p(G^*)\otimes D_q(G^*)\ar[r]^(.48){\eta_{\id_{G^*}}}\ar[d]^{\eta_\Phi}&
\bigwedge^rF\otimes \bigwedge^{p+1}(G^*)\otimes D_{q-1}(G^*)\ar[d]^{\eta_\Phi}\\
0\to\bigwedge^{r+1}F\otimes K^p_{q-1}(G^*)\ar@{^(->}[r]&\bigwedge^{r+1}F\otimes \bigwedge^p(G^*)\otimes D_{q-1}(G^*)\ar[r]^(.48){\eta_{\id_{G^*}}}&
\bigwedge^{r+1}F\otimes \bigwedge^{p+1}(G^*)\otimes D_{q-2}(G^*)
}$$}}

\noindent and
{\footnotesize{$$\xymatrix{0\to\bigwedge^rF\otimes L^p_qG\ar@{^(->}[r]\ar@{-->}[d]^{\Kos_\Phi}&\bigwedge^rF\otimes \bigwedge^pG\otimes \Sym_qG\ar[r]^(.48){\kappa}\ar[d]^{\Kos_\Phi}&
\bigwedge^rF\otimes \bigwedge^{p-1}G\otimes \Sym_{q+1}G\ar[d]^{\Kos_\Phi}\\
0\to\bigwedge^{r+1}F\otimes L^p_{q+1}G\ar@{^(->}[r]&\bigwedge^{r+1}F\otimes \bigwedge^pG\otimes \Sym_{q+1}G\ar[r]^(.48){\kappa}&
\bigwedge^{r+1}F\otimes \bigwedge^{p-1}G\otimes \Sym_{q+2}G}
$$}}

\noindent commute.
\end{enumerate}\end{remarks}

\bigskip 

\section{The complexes $\mathbb K_\Phi$ and $\mathbb L_\Phi$ when $\Phi$ is a direct sum of homomorphisms.}

\bigskip 

In this section we assume that the homomorphism $\Phi:G^*\to F$ is a direct sum of homomorphisms. In Proposition~\ref{87.5} we relate the complexes $\mathbb K^{N,p}_\Phi$ and $\mathbb L^{N,p}_\Phi$ of Definition~\ref{8mathbb-E}.\ref{8mathbb-E.c} to similar complexes built from smaller data. This result plays a prominent role in the proof of the acyclicity of the complexes $\mathcal C^{i,a}_\Phi$; see Theorem~\ref{main}, which is the main result of the paper. 
\begin{data}\label{8data-8-8} 
Let $R$ be a commutative Noetherian ring, $F$ and $G$ be free $R$-modules of rank $\goth f$ and $\goth g$, respectively,  
and $\Phi:G^*\to F$ be an $R$-module homomorphism. 
Decompose $F$ and $G$ as
\begin{equation}\notag
F=F'\oplus F'' \quad\text{and}\quad G={G'}\oplus {G''},\end{equation} where $F'$, $F''$, ${G'}$ and ${G''}$  are free $R$-modules and $\rank F''=\rank {G''}=1$ and  let $$F^*={F'}^*\oplus {F''}^*\quad\text{and}\quad  G^*={G'}^*\oplus {G''}^*$$ be the corresponding decompositions of $F^*$ and $G^*$.
Assume that
 \begin{equation}\notag\Phi=\bmatrix \Phi'&0\\0&\Phi''\endbmatrix,\end{equation} where $\Phi':{G'}^*\to F'$ 
and 
$\Phi'':{G''}^*\to F''$ are  $R$-module homomorphisms.
\end{data}

\begin{notation}\label{notation-9}Adopt Data~\ref{8data-8-8}.\begin{enumerate}[\rm(a)]\item
Let $\mathbb A(\Phi'')$ and $\mathbb B(\Phi'')$ represent the complexes \begin{align*}\mathbb A(\Phi''):\quad &0\to R\xrightarrow{\Kos_{\Phi''} }F''\otimes G''\to 0\\\intertext{and}
\mathbb B(\Phi''):\quad &0\to {G''}^*\xrightarrow{\Phi''}F'' \to 0.
\end{align*}
\item Let $\pi':F\to F'$ be the projection map  
which corresponds to the direct sum decomposition $F=F'\oplus F''$.

\item\label{notation-9.c} If $a$, $b$, $c$, $d$, and $e$ are integers, then let $$\textstyle\incl^\dagger:\bigwedge^aF'\otimes \bigwedge^bG\otimes \Sym_cG\otimes\bigwedge^dF''\otimes \Sym_eG''\to \bigwedge^{a+d}F\otimes \bigwedge^bG\otimes \Sym_{c+e}G$$ be the $R$-module homomorphism given by
$$\textstyle (\text{multiplication in} \textstyle\bigwedge^\bullet F)\otimes(\text{the identity map in} \bigwedge^\bullet G)\otimes(\text{multiplication in} \Sym_\bullet G).$$ 
\item\label{notation-9.d} If $a$, $b$, $c$, $d$, and $e$  are integers, then let $$\textstyle\quot^\dagger: \bigwedge^{a+d}F\otimes \bigwedge^{b+e}G\otimes \Sym_{c}G\to \bigwedge^aF'\otimes \bigwedge^bG'\otimes \Sym_cG'\otimes\bigwedge^dF''\otimes \bigwedge^eG''$$  be the $R$-module homomorphism given by
{\small$$\textstyle \left(\begin{matrix}\text{the projection map}\\\text{$\bigwedge^{a+d}F\to
 \bigwedge^aF'\otimes \bigwedge^d F''$}\\\text{induced by $F=F'\oplus F''$}\end{matrix} \right)\otimes
\left(\begin{matrix}\text{the projection map}\\\text{$\bigwedge^{b+e}G\to
 \bigwedge^bG'\otimes \bigwedge^eG''$}\\\text{induced by $G=G'\oplus G''$}\end{matrix} \right)\otimes \left(\begin{matrix}\text{the  quotient map}\\\text{$\Sym_\bullet G\to \Sym_\bullet G'$}\\\text{induced by $G/G''=G'$}\end{matrix}\right).$$}
\item\label{notation-9.e} If $a$, $b$, $c$,  and $d$ are integers, then let $$\textstyle\incl^\ddag:\bigwedge^aF'\otimes \bigwedge^b({G'}^*)\otimes D_c({G'}^*)\otimes\bigwedge^d({G''}^*)\to \bigwedge^{a}F\otimes \bigwedge^{b+d}(G^*)\otimes D_{c}(G^*)$$ be the $R$-module homomorphism given by
$$(\text{inclusion})\otimes(\text{multiplication})\otimes(\text{inclusion}).$$
\item\label{notation-9.f} If $a$, $b$, $c$, $d$, and $e$  are integers, then let $$\textstyle\quot^\ddag: \bigwedge^{a+d}F\otimes \bigwedge^{b}(G^*)\otimes D_{c+e}(G^*)\to \bigwedge^aF'\otimes \bigwedge^b({G'}^*)\otimes D_c(G^*)\otimes\bigwedge^dF''\otimes D_e({G''}^*)$$  be the $R$-module homomorphism given by
{\small$$\textstyle \left(\begin{matrix}\text{the projection map}\\\text{$\bigwedge^{a+d}F\to
 \bigwedge^aF'\otimes \bigwedge^d F''$}\\\text{induced by $F=F'\oplus F''$}\end{matrix} \right)\otimes
\left(\begin{matrix}\text{the identity  map}\\\text{on $\bigwedge^{\bullet}(G^*)$}\end{matrix} \right)\otimes \left(\begin{array}{l} D_{c+e}(G^*)\xrightarrow{\ev^*(1)\otimes 1}\vspace{5pt}\\  D_e({G''}^*)\otimes \Sym_eG''\otimes D_{c+e}(G^*)\vspace{5pt}
\\ \xrightarrow{1\otimes \modact} D_e({G''}^*)\otimes D_c(G^*)\vspace{5pt}\\
\xrightarrow{\text{exchange}}D_c(G^*)\otimes D_e({G''}^*)
\end{array}\right),$$}where $\ev^*(1)$ is described in (\ref{ev-star}) and $\modact$ is the module action of $\Sym_\bullet G$ on $D_\bullet (G^*)$.\end{enumerate}
\end{notation}

Proposition~\ref{87.5} asserts that (\ref{87.5.gtsL}) and (\ref{87.5.gts}) are short exact sequences of complexes. Observation~\ref{8-21-0} considers the modules in (\ref{87.5.gtsL}) and (\ref{87.5.gts}) in the arbitrary position $j$.
Recall the function $\chi$ from \ref{chi}.
\begin{observation}\label{8-21-0} Adopt Data~{\rm\ref{8data-8-8}} and Notation~{\rm\ref{notation-9}}. 
\begin{enumerate}[\rm(a)]
\item\label{8-21-0.a}If $p,q,r$ are integers, then
{\footnotesize\begin{equation}\notag 0\to \begin{matrix} \bigwedge^rF'\otimes L^p_qG\\\oplus\\
\bigwedge^{r-1}F'\otimes L^p_{q-1}G\otimes F''\otimes G''\end{matrix}
\xrightarrow{\incl^\dagger} {\textstyle\bigwedge^r}F\otimes L^p_qG \xrightarrow{\quot^\dagger} 
\begin{matrix} \bigwedge^{r-1}F'\otimes L^p_qG'\otimes F''\\\oplus\\
\chi((p,q)\neq (1,0))\big(\bigwedge^{r-1}F'\otimes L^{p-1}_{q}G'\otimes F''\otimes G''\big)\end{matrix}\to 0\end{equation}}

\noindent is an exact sequence of $R$-modules.

\item \label{8-21-0.b} If $p,q,r$ are integers, 
 then
{\footnotesize\begin{equation}\notag
\textstyle 0\to{\begin{matrix} \bigwedge^{r}F'\otimes K^p_q({G'}^*)\\\oplus\\
\bigwedge^{r}F'\otimes K^{p-1}_{q}({G'}^*)\otimes {G''}^*\end{matrix}}
\xrightarrow{\incl^\ddag} \bigwedge^{r}F\otimes K^p_q(G^*)\xrightarrow{\quot^\ddag}
{\begin{matrix} 
\chi\big((p,q)\neq(0,1)\big)\big(\bigwedge^{r}F'\otimes K^p_{q-1}(G^*)\otimes {G''}^*\big)\\\oplus\\
\bigwedge^{r-1}F'\otimes K^{p}_{q}(G^*)\otimes F''\end{matrix}}
\to0\end{equation}}

\noindent is 
 an exact sequence of $R$-modules.\end{enumerate}\end{observation}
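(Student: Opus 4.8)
The plan is to prove both (a) and (b) by exhibiting, in each position, a short exact sequence of free $R$-modules arising from the direct sum decompositions $F=F'\oplus F''$ and $G=G'\oplus G''$, and then to check that the claimed kernels of the relevant Koszul/Eagon-Northcott maps sit compatibly inside these sequences. Since both statements are symmetric under the duality $V\leftrightarrow V^*$ (compare \ref{A3} and the pairing (\ref{p207})), I would prove (a) in detail and obtain (b) by applying $(\underline{\phantom{X}})^*$ and reindexing, exactly as Observation~\ref{2.8} deduces (\ref{2.8.f}) and (\ref{2.8.a}) from the split exact complex (\ref{KOS}).

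For part (a), the first step is to record the standard split exact sequence coming from $F=F'\oplus F''$ with $\rank F''=1$:
$$\textstyle 0\to \bigwedge^rF'\to \bigwedge^rF\to \bigwedge^{r-1}F'\otimes F''\to 0,$$
obtained by wedging with the generator of $F''$. Tensoring with the free module $L^p_qG$ is harmless. Next, I would need a companion sequence for the $\bigwedge^\bullet G\otimes \Sym_\bullet G$ part; here the point is that $L^p_qG\subseteq \bigwedge^pG\otimes \Sym_qG$ and the decomposition $G=G'\oplus G''$ induces, via $\incl^\dagger$ and $\quot^\dagger$ of \ref{notation-9}.\ref{notation-9.c} and \ref{notation-9.d}, a short exact sequence whose outer terms involve $L^p_qG'$, $L^{p-1}_qG'$ (the summand with $G''$ wedged in), and $L^p_{q-1}G\otimes G''$ (the summand with $G''$ multiplied into $\Sym_\bullet$). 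The $\chi((p,q)\neq(1,0))$ factor is exactly the degenerate case where $L^{p-1}_qG'=L^0_qG'=0$ for $q\neq 0$ by Observation~\ref{2.8}.\ref{2.8.i}, so that the corresponding term drops; this bookkeeping must be done carefully against the list in Observation~\ref{2.8}. Combining the $F$-side sequence and the $G$-side sequence (a routine diagram chase / snake lemma argument over free modules), and matching the two middle terms $\bigwedge^{r-1}F'\otimes L^p_qG\otimes F''$ against each other, yields the asserted six-term exactness; the maps are visibly $\incl^\dagger$ and $\quot^\dagger$ by construction.

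The main obstacle I anticipate is verifying that the Schur-module kernel conditions are respected, i.e.\ that the inclusion $L^p_qG\hookrightarrow \bigwedge^pG\otimes\Sym_qG$ decomposes along $G=G'\oplus G''$ precisely as claimed rather than producing extra pieces. This is where one must use that $\kappa$ (the map (\ref{co-m1}) whose kernel is $L^p_qG$) interacts well with the projection $\Sym_\bullet G\to \Sym_\bullet G'$ and with wedging by the generator of $G''$; the relevant commutation is essentially the content of Remarks~\ref{5.3}.\ref{5.3.c} applied with $\Phi$ replaced by identity maps on the appropriate pieces. In other words, one checks that $\kappa$ is ``triangular'' with respect to the induced filtration, so its kernel filters compatibly. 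The same verification on the dual side (with $\eta$, $D_\bullet$, and $K^p_q$) gives (b); the degenerate exception there is $\chi((p,q)\neq(0,1))$, corresponding to $K^p_{q-1}(G^*)$ collapsing when $p=0$ via Observation~\ref{2.8}.\ref{2.8.f}. I expect the rest of the argument to be bookkeeping: all modules in sight are free, so exactness in the middle follows once the two flanking short exact sequences are identified and their connecting maps are checked to agree with $\incl^\dagger,\quot^\dagger$ (resp.\ $\incl^\ddag,\quot^\ddag$) up to the harmless signs recorded in Proposition~\ref{A3}.
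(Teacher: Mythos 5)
Your overall strategy for part (a) — use the split exactness coming from $F=F'\oplus F''$ and $G=G'\oplus G''$ together with the snake lemma, checking that $\kappa$ is compatible with the resulting filtration — is the right one, and it is the same basic mechanism as the paper's proof. The paper organizes it differently, though: it writes down a single $3\times 4$ commutative diagram (Figure~\ref{the following picture}) whose columns are the four-term split sequences
$0\to L^p_qG\to\bigwedge^pG\otimes\Sym_qG\xrightarrow{\kappa}\bigwedge^{p-1}G\otimes\Sym_{q+1}G\to L^{p-2}_{q+2}G\to 0$
(coming from the split exact complex (\ref{KOS})) and whose middle two rows are the manifestly split decompositions of $\bigwedge^rF\otimes\bigwedge^pG\otimes\Sym_qG$, then applies the snake lemma to the middle two rows and observes that the connecting map vanishes because the bottom $\incl^\dagger$ is an injection. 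Your two-step plan — first establish a ``$G$-only'' sequence for $L^p_qG$ and then splice with the obvious $F$-sequence — is workable, but be aware that the ``$G$-only'' sequence you want is exactly (\ref{dentist}), which the paper states as a \emph{corollary} of Observation~\ref{8-21-0} (the case $r=1$, $F'=0$); so you must derive it independently via your ``triangularity of $\kappa$'' argument rather than cite it, and the resulting kernel is a priori only an extension, so you must still check that $\incl^\dagger$ is a section onto it. Your reading of the $\chi$-factor is also slightly off: at $(p,q)=(1,0)$ the module $L^{p-1}_qG'=L^0_0G'$ equals $R$ (by \ref{2.8}.\ref{2.8.c}), not $0$; the point of the $\chi$ is to \emph{suppress} that genuine extra summand, and \ref{2.8}.\ref{2.8.i} is what shows the exception is confined to the single pair $(1,0)$ (the analogous remarks apply to $(0,1)$ and \ref{2.8}.\ref{2.8.f}/\ref{2.8}.\ref{2.8.h} in part (b)). Finally, deducing (b) from (a) by duality is a genuine departure: the paper instead proves (b) by a mirrored diagram (Figure~\ref{not the following picture}). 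The duality route is plausible but subtler than you suggest, because of the shift $\bigwedge^rF^*\cong\bigwedge^{\goth f-r}F\otimes\bigwedge^{\goth f}F^*$, the degenerate-case restriction $(a,b)\neq(0,0),(-1,1)$ on the pairing (\ref{p207}) which sits right next to the $\chi$-exceptional indices, and the fact that $\incl^\dagger,\quot^\dagger$ treat the $\Sym$ factor asymmetrically (inclusion versus quotient), so that their $R$-duals are not literally $\quot^\ddag,\incl^\ddag$ without extra canonical identifications; the paper's direct parallel argument avoids all of this.
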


\setcounter{figure}{\value{equation}}
\begin{figure}
\begin{center}
{\tiny$$ \xymatrix{0\ar[d]&0\ar[d]&0\ar[d]
\\
0\to{\begin{matrix} \bigwedge^{r}F'\otimes L^p_qG\\\oplus\\
\bigwedge^{r-1}F'\otimes L^p_{q-1}G\otimes F''\otimes G''\end{matrix}}
\ar[r]^(.6){\incl^\dagger}\ar@{^(->}[d]& \bigwedge^{r}F\otimes L^p_qG\ar[r]^(.37){\quot^\dagger}\ar@{^(->}[d]&
{\begin{matrix} 
\bigwedge^{r-1}F'\otimes L^p_qG'\otimes F''\\\oplus\\
\bigwedge^{r-1}F'\otimes L^{p-1}_{q}G'\otimes F''\otimes G''\end{matrix}}
\ar@{^(->}[d]\to0
\\
0\to{\begin{matrix} \bigwedge^{r}F'\otimes \bigwedge^pG\otimes S_qG\\\oplus\\
\bigwedge^{r-1}F'\otimes \bigwedge^pG\otimes S_{q-1}G\otimes F''\otimes G''\end{matrix}}
\ar[r]^(.6){\incl^\dagger}\ar[d]^{\kappa}& \bigwedge^{r}F\otimes \bigwedge^pG\otimes S_qG\ar[r]^(.37){\quot^\dagger}\ar[d]^{\kappa}&
{\begin{matrix} 
\bigwedge^{r-1}F'\otimes \bigwedge^pG'\otimes S_qG'\otimes F''\\\oplus\\
\bigwedge^{r-1}F'\otimes \bigwedge^{p-1}G'\otimes S_{q}G'\otimes F''\otimes G''\end{matrix}}
\ar[d]^{\kappa}\to0
\\
0\to{\begin{matrix} \bigwedge^{r}F'\otimes \bigwedge^{p-1}G\otimes S_{q+1}G\\
\oplus\\
\bigwedge^{r-1}F'\otimes \bigwedge^{p-1}G\otimes S_{q}G\otimes F''\otimes 
G''\end{matrix}}
\ar[r]^(.6){\incl^\dagger}\ar[d]^{\kappa}& \bigwedge^{r}F\otimes \bigwedge^{p-1}G\otimes S_{q+1}G\ar[r]^
(.37){\quot^\dagger}\ar[d]^{\kappa}&
{\begin{matrix} 
\bigwedge^{r-1}F'\otimes \bigwedge^{p-1}G'\otimes S_{q+1}G'\otimes F''\\\oplus\\
\bigwedge^{r-1}F'\otimes \bigwedge^{p-2}G'\otimes S_{q+1}G'\otimes F''\otimes 
G''\end{matrix}}
\ar[d]^{\kappa}\to0
\\
0\to{\begin{matrix} \bigwedge^{r}F'\otimes L^{p-2}_{q+2}G\\\oplus\\
\bigwedge^{r-1}F'\otimes L^{p-2}_{q+1}G\otimes F''\otimes G''\end{matrix}}
\ar[r]^(.6){\incl^\dagger}\ar[d]& \bigwedge^{r}F\otimes L^{p-2}_{q+2}G\ar[r]^(.37){\quot^\dagger}\ar[d]&
{\begin{matrix} 
\bigwedge^{r-1}F'\otimes L^{p-2}_{q+2}G'\otimes F''\\\oplus\\
\bigwedge^{r-1}F'\otimes L^{p-3}_{q+2}G'\otimes F''\otimes G''\end{matrix}}
\to0\ar[d]\\
0&0&0}$$}\caption{This picture is a commutative diagram which is used in the proof of Observation~\ref{8-21-0}.\ref{8-21-0.a}. The middle two rows are exact. The columns are exact provided $2\le p+q$. We use ``$S$'' as an abbreviation for ``$\Sym$''.}\setcounter{figure}{\value{equation}}\label{the following picture}\addtocounter{equation}{1}
\end{center}\end{figure}

\setcounter{figure}{\value{equation}}
\begin{figure}
\begin{center}
{\tiny$$\hskip-1.6pt \xymatrix{0\ar[d]&0\ar[d]&0\ar[d]
\\
0\to{\begin{matrix} \bigwedge^{r}F'\otimes K^p_q({G'}^*)\\\oplus\\
\bigwedge^{r}F'\otimes K^{p-1}_{q}({G'}^*)\otimes {G''}^*\end{matrix}}
\ar[r]^(.6){\incl^\ddag}\ar@{^(->}[d]& \bigwedge^{r}F\otimes K^p_q(G^*)\ar[r]^(.42){\quot^\ddag}\ar@{^(->}[d]&
{\begin{matrix} 
\bigwedge^{r}F'\otimes K^p_{q-1}(G^*)\otimes {G''}^*\\\oplus\\
\bigwedge^{r-1}F'\otimes K^{p}_{q}(G^*)\otimes F''\end{matrix}}
\ar@{^(->}[d]\to0\\
0\to{\begin{matrix} \bigwedge^{r}F'\otimes \bigwedge^p({G'}^*)\otimes D_q({G'}
^*)\\\oplus\\
\bigwedge^{r}F'\otimes \bigwedge^{p-1}({G'}^*)\otimes D_{q}({G'}^*)\otimes  {G''}
^*
\end{matrix}}
\ar[r]^(.6){\incl^\ddag}\ar[d]^{\eta_{\id_{G^*}}}& \bigwedge^{r}F\otimes \bigwedge^p(G^*)\otimes D_q(G^*)\ar[r]^(.42){\quot^\ddag}\ar[d]^{\eta_{\id_
{G^*}}}&
{\begin{matrix} 
\bigwedge^{r}F'\otimes \bigwedge^p(G^*)\otimes D_{q-1}(G^*)\otimes {G''}^*\\
\oplus\\
\bigwedge^{r-1}F'\otimes \bigwedge^{p}(G^*)\otimes D_{q}(G^*)\otimes F''\end
{matrix}}
\ar[d]^{\eta_{\id_{G^*}}}\to0
\\
0\to\hskip-2pt{\begin{matrix} \bigwedge^{r}F'\otimes \bigwedge^{p+1}({G'}^*)\otimes D_{q-1}({G'}^*)\\
\oplus\\
\bigwedge^{r}F'\otimes \bigwedge^{p}({G'}^*)\otimes D_{q-1}({G'}^*)\otimes 
{G''}^*\end{matrix}}
\ar[r]^(.6){\incl^\ddag}\ar[d]^{\eta_{\id_{G^*}}}&\bigwedge^{r}F\otimes \bigwedge^{p+1}(G^*)\otimes D_{q-1}(G^*)\ar[r]^
(.42){\quot^\ddag}\ar[d]^{\eta_{\id_{G^*}}}&
{\begin{matrix} 
\bigwedge^{r}F'\otimes \bigwedge^{p+1}(G^*)\otimes D_{q-2}(G^*) \otimes {G''}^*
\\
 \oplus\\
\bigwedge^{r-1}F'\otimes \bigwedge^{p+1}(G^*)\otimes D_{q-1}(G^*)\otimes F''
\end{matrix}}
\ar[d]^{\eta_{\id_{G^*}}}\to0\\
0\to{\begin{matrix} \bigwedge^{r}F'\otimes K^{p+2}_{q-2}({G'}^*)\\\oplus\\
\bigwedge^{r}F'\otimes K^{p+1}_{q-2}({G'}^*)\otimes {G''}^*\end{matrix}}
\ar[r]^(.6){\incl^\ddag}\ar[d]& \bigwedge^{r}F\otimes K^{p+2}_{q-2}(G^*)\ar[r]^(.42){\quot^\ddag}\ar[d]&
{\begin{matrix} 
\bigwedge^{r}F'\otimes K^{p+2}_{q-3}(G^*)\otimes {G''}^*\\\oplus\\
\bigwedge^{r-1}F'\otimes K^{p+2}_{q-2}(G^*)\otimes F''\end{matrix}}
\to0\ar[d]\\
0&0&0}$$}\caption{This picture is a commutative diagram which is used in the proof of Observation~\ref{8-21-0}.\ref{8-21-0.b}. The middle two rows are exact. The columns are exact provided $2\le p+q$.}\setcounter{figure}{\value{equation}}\label{not the following picture}\addtocounter{equation}{1}
\end{center}\end{figure}

\begin{remark-no-advance} 
If $r=1$ and $F'=0$, then $F=F''$ has rank one and no harm occurs if we set $F=F''$ equal to $R$ (that is, apply $-\otimes F^*$). In this case,
\ref{8-21-0}.\ref{8-21-0.a} is
\begin{equation}\label{dentist}0\to 
L^p_{q-1}G\otimes G''
\xrightarrow{\incl^\dagger}  L^p_qG \xrightarrow{\quot^\dagger} 
 L^p_qG'\oplus
\chi\big((p,q)\neq(1,0)\big)\big(L^{p-1}_{q}G'\otimes G''\big)\to 0.\end{equation}
If $r=0$, then \ref{8-21-0}.\ref{8-21-0.b} is 
\small{\begin{equation}\label{rep-thy}\textstyle 0\to \begin{matrix}
K^p_q({G'}^*)\\
\oplus \\
 \big(K^{p-1}_q({G'}^*)\otimes {G''}^*\big)\end{matrix}
\xrightarrow{\incl^\ddag} K^p_q(G^*)
\xrightarrow{\quot^\ddag} 
\chi\big((p,q)\neq(0,1)\big)\big(K^p_{q-1}(G^*)\otimes {G''}^*\big)\to 0.\end{equation} }
The exact sequences (\ref{dentist}) and (\ref{rep-thy}) are  results from representation theory; see, for example,  \cite[2.3.1]{W} or \cite[Exercise 6.11]{FH}. The versions we give are stated explicitly, require no assumptions about characteristic, and  are precisely the results that we use in the proof of Lemma~\ref{key}. In fact, our proof of Proposition~\ref{87.5} was created by starting with proofs of (\ref{dentist}) and (\ref{rep-thy}).
\end{remark-no-advance}
\begin{proof} (\ref{8-21-0.a}). If $p<0$ or $q<0$, then all of the modules in \ref{8-21-0}.\ref{8-21-0.a} are zero. If $q=0$, then \ref{8-21-0}.\ref{8-21-0.a} is 
$$0\to  \textstyle \chi(p=0)\bigwedge^rF'
\xrightarrow{\incl^\dagger} \chi(p=0){\textstyle\bigwedge^r}F \xrightarrow{\quot^\dagger} 
\chi(p=0) \bigwedge^{r-1}F'\otimes  F''\to 0,$$ which is exact for all $p$. (Notice that we used the factor $\chi((p,q)\neq (0,0))$ in our proof that \ref{8-21-0}.\ref{8-21-0.a} is exact for all $p$ when $q=0$.)
 If $p=0$, then  \ref{8-21-0}.\ref{8-21-0.a} 
is 
{\small$$0\to \begin{matrix} \bigwedge^rF'\otimes \Sym_qG\\\oplus\\
\bigwedge^{r-1}F'\otimes \Sym_{q-1}G\otimes F''\otimes G''\end{matrix}
\xrightarrow{\incl^\dagger} \textstyle\bigwedge^rF\otimes \Sym_qG \xrightarrow{\quot^\dagger} 
\bigwedge^{r-1}F'\otimes \Sym_qG'\otimes F''\to 0,$$}

\noindent which is exact for all $q$. Henceforth, we assume $2\le p+q$. Observe that  Figure~\ref{the following picture} is a commutative diagram; each column  is split exact; and the middle two rows are split exact. The assertion follows from the snake lemma and the fact that the bottom $\incl^\dagger$ in Figure~\ref{the following picture} is automatically an injection. 

(\ref{8-21-0.b}). If $p<0$ or $q<0$, then all of the modules in \ref{8-21-0}.\ref{8-21-0.b} are zero. If $q=0$, then \ref{8-21-0}.\ref{8-21-0.b} is 
$$\textstyle 0\to{\begin{matrix} \bigwedge^{r}F'\otimes \bigwedge^p({G'}^*)\\\oplus\\
\bigwedge^{r}F'\otimes \bigwedge^{p-1}({G'}^*)\otimes {G''}^*\end{matrix}}
\xrightarrow{\incl^\ddag} \bigwedge^{r}F\otimes \bigwedge^p(G^*)\xrightarrow{\quot^\ddag}
{\begin{matrix} 
\bigwedge^{r-1}F'\otimes \bigwedge^{p}(G^*)\otimes F''\end{matrix}}
\to0,
$$
which is exact for all integers $p$. If $p=0$, then \ref{8-21-0}.\ref{8-21-0.b} is
$$\textstyle 0\to \chi(q=0)\bigwedge^{r}F' 
\xrightarrow{\incl^\ddag} \chi(q=0)\bigwedge^{r}F \xrightarrow{\quot^\ddag}
\chi(q=0)\bigwedge^{r-1}F'\otimes  F''
\to0,$$
which is exact for all integers $q$. 
 Henceforth, we assume $2\le p+q$. Observe that  (\ref{not the following picture}) is a commutative diagram; each column  is split exact; and the middle two rows are split exact. Once again, the assertion follows from the snake lemma and the fact that the bottom $\incl^\ddag$ in Figure~\ref{not the following picture} is automatically an injection. \end{proof}

\medskip
The complexes $\mathbb L^{N,p}_\Phi$ and $\mathbb K^{N,p}_\Phi$ may be found in Definition~\ref{8mathbb-E}.\ref{8mathbb-E.c}; the complexes $\mathbb A(\Phi'')$ and $\mathbb B(\Phi'')$ are defined in Notation~\ref{notation-9}; and some comments about the total complex (or mapping cone) of a map of complexes is in \ref{Tot}.

\begin{proposition} \label{87.5}Adopt Data~{\rm\ref{8data-8-8}} and Notation~{\rm\ref{notation-9}}. Let $p$ and $N$ be integers. The following statements hold.
\begin{enumerate}[\rm(a)] 
\item\label{87.5.b} Assume that $p\neq 0$.
Then there is a canonical short exact sequence  of complexes 
\begin{align}0\to 
\Tot\Big(\mathbb L^{N,p}_{\pi'\circ \Phi}\otimes \mathbb A(\Phi'')\Big)\xrightarrow{\incl^\dagger} \mathbb L^{N,p}_\Phi\xrightarrow{\quot^\dagger} 
{\begin{matrix}\mathbb L^{N-1,p}_{\Phi'}\otimes F''\\\oplus\\   \mathbb L^{N-1,p-1}_{\Phi'}\otimes {F''} \otimes G''\end{matrix}}\to 0.
\label{87.5.gtsL}\end{align}
In particular, if $\Phi''$ is an isomorphism, then  $\quot^\dagger$ induces an isomorphism
\begin{equation}\label{87.5.cncl'}\HH_j(\mathbb L^{N-1,p}_{\Phi'})\oplus \HH_j(\mathbb L^{N-1,p-1}_{\Phi'})\cong
\HH_j(\mathbb L^{N,p}_\Phi),\end{equation} for all $j$.

\item\label{87.5.a} Assume that   
$p\neq 0$. 
Then there is a canonical short exact sequence  of complexes 
\begin{align}0\to{\begin{matrix}\mathbb K^{N,p}_{\Phi'}\\\oplus\\  (\mathbb K^{N,p-1}_{\Phi'}\otimes {G''}^*)\end{matrix}}\xrightarrow{\incl^\ddag} \mathbb K^{N,p}_\Phi\xrightarrow{\quot^\ddag}
\Tot\Big(\mathbb K^{N-1,p}_{\pi'\circ \Phi} \otimes 
\mathbb B(\Phi'')
\Big)\to 0.
\label{87.5.gts}\end{align}
In particular, if $\Phi''$ is an isomorphism, then  $\incl^\ddag$ induces an isomorphism
\begin{equation}\label{87.5.cncl}\HH_j(\mathbb K^{N,p}_{\Phi'})\oplus \HH_j(\mathbb K^{N,p-1}_{\Phi'})\cong \HH_j(\mathbb K^{N,p}_\Phi),\end{equation} for all $j$.
\end{enumerate}
\end{proposition}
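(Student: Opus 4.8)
The plan is to obtain (\ref{87.5.gtsL}) and (\ref{87.5.gts}) as short exact sequences of \emph{complexes} by stacking, in each homological degree, the pointwise short exact sequences of $R$-modules furnished by Observation~\ref{8-21-0}, and then to read off the homology assertions from the long exact sequences these produce. For part~(\ref{87.5.b}) the first job is bookkeeping: fix a homological degree $j$, put $r=\goth f-j$ and $q=\goth f-N-j$, and match terms. Then $[\mathbb L^{N,p}_\Phi]_j=\bigwedge^{r}F\otimes L^p_{q}G$ is the middle module of Observation~\ref{8-21-0}.\ref{8-21-0.a}; the module $[\mathbb L^{N-1,p}_{\Phi'}\otimes F'']_j\oplus[\mathbb L^{N-1,p-1}_{\Phi'}\otimes F''\otimes G'']_j$ is its right-hand module; and, with $R$ placed in homological degree~$1$ and $F''\otimes G''$ in degree~$0$ in $\mathbb A(\Phi'')$, the module $[\Tot(\mathbb L^{N,p}_{\pi'\circ\Phi}\otimes\mathbb A(\Phi''))]_j=[\mathbb L^{N,p}_{\pi'\circ\Phi}]_{j-1}\oplus\big([\mathbb L^{N,p}_{\pi'\circ\Phi}]_j\otimes F''\otimes G''\big)$ is its left-hand module. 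Two places in this matching use $p\neq 0$: it makes the exceptional case $(p,j)=(0,\goth f-N)$ in (\ref{cases}) agree with the factor $\chi\big((p,q)\neq(1,0)\big)$ of Observation~\ref{8-21-0}.\ref{8-21-0.a} (both single out $p=1,q=0$), and it prevents the exceptional module of $\mathbb L^{N,p}_{\pi'\circ\Phi}$ from being forced to zero against the non-zero left-hand module of Observation~\ref{8-21-0}.\ref{8-21-0.a} at $q=0$. Since the maps $\incl^\dagger,\quot^\dagger$ of Notation~\ref{notation-9} are exactly the maps bearing those names in Observation~\ref{8-21-0}.\ref{8-21-0.a}, each row is a short exact sequence of modules.

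It then remains to check that $\incl^\dagger$ and $\quot^\dagger$ are morphisms of complexes, i.e.\ that they commute with the differential $\Kos_\Phi$ of $\mathbb L^{N,p}_\Phi$, with $\Kos_{\Phi'}$ on the $\mathbb L_{\Phi'}$-summands, and with the mapping-cone differential of $\Tot(\mathbb L^{N,p}_{\pi'\circ\Phi}\otimes\mathbb A(\Phi''))$ (see~\ref{Tot}), whose off-diagonal entry comes from the differential $\Kos_{\Phi''}$ of $\mathbb A(\Phi'')$. The mechanism is that $\ev^*(1)\in G^*\otimes G$ is the sum of the corresponding elements for $G'$ and for $G''$, so---because $\Phi=\Phi'\oplus\Phi''$ is block diagonal---$\Kos_\Phi$ decomposes into a ``$\Phi'$-part'', which raises the exterior degree inside $F'$ and is reproduced by $\Kos_{\pi'\circ\Phi}$, and a ``$\Phi''$-part'', which wedges in the generator of $F''$ and multiplies in the generator of $G''$ and is reproduced by the $\Kos_{\Phi''}$-term of the mapping cone; keeping track of the signs in~\ref{Tot} finishes the commutativity. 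This is of the same flavor as the commuting squares in Remark~\ref{5.3}.\ref{5.3.c}, refined along $F=F'\oplus F''$ and $G=G'\oplus G''$, with Proposition~\ref{A3} and the coordinate-free formalism of~\ref{87Not1} supplying the identities. Part~(\ref{87.5.a}) is entirely parallel: one uses Observation~\ref{8-21-0}.\ref{8-21-0.b} with $r=N-j$, $q=j$, replaces $\mathbb A(\Phi'')$ by $\mathbb B(\Phi'')$ (with ${G''}^*$ in homological degree~$1$ and $F''$ in degree~$0$), $\Kos$ by $\eta$, and $\incl^\dagger,\quot^\dagger$ by $\incl^\ddag,\quot^\ddag$, and the factor $\chi\big((p,q)\neq(0,1)\big)$ is again harmless because $p\neq 0$.

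For the ``in particular'' statements: if $\Phi''$ is an isomorphism, then $\Kos_{\Phi''}\colon R\to F''\otimes G''$ and $\Phi''\colon{G''}^*\to F''$ are isomorphisms, so $\mathbb A(\Phi'')$ and $\mathbb B(\Phi'')$ are split exact, hence contractible; tensoring any $R$-complex with a contractible complex of free modules yields a contractible complex, so $\Tot(\mathbb L^{N,p}_{\pi'\circ\Phi}\otimes\mathbb A(\Phi''))$ and $\Tot(\mathbb K^{N-1,p}_{\pi'\circ\Phi}\otimes\mathbb B(\Phi''))$ are acyclic. The long exact homology sequences of (\ref{87.5.gtsL}) and (\ref{87.5.gts}) then force $\quot^\dagger$ and $\incl^\ddag$ to be quasi-isomorphisms, and since $F''$, $G''$, ${G''}^*$ are free of rank~$1$ they may be dropped without changing homology, yielding (\ref{87.5.cncl'}) and (\ref{87.5.cncl}).

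The main obstacle is the second paragraph: verifying that $\incl^\dagger,\quot^\dagger$ (and $\incl^\ddag,\quot^\ddag$) are chain maps with the mapping-cone signs of~\ref{Tot} handled correctly, and in particular that the part of $\Kos_\Phi$ (respectively $\eta_\Phi$) mixing the summands of $F=F'\oplus F''$ and $G=G'\oplus G''$ is precisely the off-diagonal term contributed by $\mathbb A(\Phi'')$ (respectively $\mathbb B(\Phi'')$). The pointwise exactness is immediate from Observation~\ref{8-21-0}, and the homology count at the end is a routine consequence of acyclicity of a contractible complex.
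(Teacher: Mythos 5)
Your proposal is correct and takes essentially the same route as the paper: in each homological degree you match the terms of (\ref{87.5.gtsL}) and (\ref{87.5.gts}) against the short exact sequences of Observation~\ref{8-21-0} (with the same substitutions $r=\goth f-j$, $q=\goth f-N-j$ for part (\ref{87.5.b}) and $r=N-j$, $q=j$ for part (\ref{87.5.a})), then check that $\incl^\dagger,\quot^\dagger$ (resp.\ $\incl^\ddag,\quot^\ddag$) are chain maps — the paper carries this out via the commutative diagrams in Figures \ref{Aug-23-2} and \ref{Aug-23-3}, which is exactly the block-diagonal decomposition of $\Kos_\Phi$ and $\eta_\Phi$ you describe — and finally obtain the homology isomorphisms from the long exact sequence, using that the $\Tot$ term is acyclic when $\Phi''$ is an isomorphism. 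Your added remark that the tensor factors $F''$, $G''$, ${G''}^*$ are free of rank one and may be dropped is a harmless tidying of the same conclusion.
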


\begin{remark-no-advance} 
 The conclusion of Proposition~\ref{87.5}.\ref{87.5.a} does not hold when $p=0$ and $1\le N\le \goth f$. In this case, $\mathbb K_{\Phi'}^{N,p}$ and $\mathbb K_{\Phi}^{N,p}$ are $0\to \bigwedge^NF'\to 0$ and $0\to \bigwedge^NF\to 0$, respectively, with each non-zero module in position zero, and $\mathbb K_{\Phi'}^{N,p-1}$ is the zero complex. Thus, (\ref{87.5.cncl}) does not hold. 

The conclusion of Proposition~\ref{87.5}.\ref{87.5.b} also does not hold when $p=0$. In particular, $\mathbb L^{N,0}_\Phi$ is the complex 
$$\textstyle 0\to \bigwedge^{N+1}F\otimes \Sym_1G\xrightarrow{\Kos_\Phi}\bigwedge^{N+2}F\otimes \Sym_2G\xrightarrow{\Kos_\Phi}\dots \xrightarrow{\Kos_\Phi}\bigwedge^{\goth f}F\otimes \Sym_{\goth f-N}G\to 0.
$$ In the typical situation $$\textstyle \HH_{\goth f-N-1}(\mathbb L^{N,0}_\Phi)=\bigwedge ^NF, \quad    \HH_{\goth f-N-1}(\mathbb L^{N-1,0}_{\Phi'})=\bigwedge ^{N-1}F',$$ and (\ref{87.5.cncl'}) fails to hold.
\end{remark-no-advance}

\setcounter{figure}{\value{equation}}
\begin{figure}
\begin{center}
{\footnotesize$$ \xymatrix{
{\begin{matrix} \bigwedge^{r}F'\otimes L^p_qG\\\oplus\\
\bigwedge^{r-1}F'\otimes L^p_{q-1}G\otimes F''\otimes G''\end{matrix}}
\ar[r]^(.6){\incl^\dagger}\ar[d]^{\left[\smallmatrix \Kos_{\pi'\circ \Phi}&0\\\Kos_{\Phi''}&-\Kos_{\pi'\circ \Phi}\endsmallmatrix\right]}& \bigwedge^{r}F\otimes L^p_qG\ar[r]^(.37){\quot^\dagger}\ar[d]^{\Kos_\Phi}&
{\begin{matrix} 
\bigwedge^{r-1}F'\otimes L^p_qG'\otimes F''\\\oplus\\
\bigwedge^{r-1}F'\otimes L^{p-1}_{q}G'\otimes F''\otimes G''\end{matrix}}
\ar[d]^{\left[\smallmatrix -\Kos_{\Phi'}&0\\0&-\Kos_{\Phi'}\endsmallmatrix\right]}
\\
{\begin{matrix} \bigwedge^{r+1}F'\otimes L^p_{q+1}G\\\oplus\\
\bigwedge^{r}F'\otimes L^p_{q}G\otimes F''\otimes G''\end{matrix}}
\ar[r]^(.6){\incl^\dagger}& \bigwedge^{r+1}F\otimes L^p_{q+1}G\ar[r]^(.37){\quot^\dagger}&
{\begin{matrix} 
\bigwedge^{r}F'\otimes L^p_{q+1}G'\otimes F''\\\oplus\\
\bigwedge^{r}F'\otimes L^{p-1}_{q+1}G'\otimes F''\otimes G''\end{matrix}}
}$$}
\caption{In the proof of Proposition~\ref{87.5}.\ref{87.5.b} one 
verifies that this diagram (with $p$ and $q$ both positive) commutes in order  to see that that 
 (\ref{87.5.gtsL})  is a map of complexes.}\setcounter{figure}{\value{equation}}\label{Aug-23-2}\addtocounter{equation}{1}\setcounter{figure}{\value{equation}}
{\footnotesize$$ \xymatrix{
{\begin{matrix} \bigwedge^{r}F'\otimes K^p_q({G'}^*)\\\oplus\\
\bigwedge^{r}F'\otimes K^{p-1}_{q}({G'}^*)\otimes {G''}^*\end{matrix}}
\ar[r]^(.63){\incl^\ddag}\ar[d]^{\left[\smallmatrix \eta_{\pi'\circ \Phi}&0\\0&\eta_{\pi'\circ \Phi}\endsmallmatrix\right]}& \bigwedge^{r}F\otimes K^p_q(G^*)\ar[r]^(.4){\quot^\ddag}\ar[d]^{\eta_\Phi}&
{\begin{matrix} 
\bigwedge^{r}F'\otimes K^p_{q-1}(G^*)\otimes {G''}^*\\\oplus\\
\bigwedge^{r-1}F'\otimes K^{p}_{q}(G^*)\otimes F''\end{matrix}}
\ar[d]^{\left[\smallmatrix \eta_{\Phi'}&0\\\Phi''&-\eta_{\Phi'}\endsmallmatrix\right]}
\\
{\begin{matrix} \bigwedge^{r+1}F'\otimes K^p_{q-1}(G^*)\\\oplus\\
\bigwedge^{r+1}F'\otimes K^{p-1}_{q-1}(G^*)\otimes F''\otimes {G''}^*\end{matrix}}
\ar[r]^(.63){\incl^\ddag}& \bigwedge^{r+1}F\otimes K^p_{q-1}(G^*)\ar[r]^(.4){\quot^\ddag}&
{\begin{matrix} 
\bigwedge^{r+1}F'\otimes K^p_{q-2}(G^*)\otimes {G''}^*\\\oplus\\
\bigwedge^{r}F'\otimes K^{p}_{q-1}(G^*)\otimes F''\end{matrix}}
}$$}
\caption{In the proof of Proposition~\ref{87.5}.\ref{87.5.a} one 
verifies that this diagram commutes in order  to see that that 
 (\ref{87.5.gts})  is a map of complexes.} \setcounter{figure}{\value{equation}}\label{Aug-23-3}\addtocounter{equation}{1}
\end{center}\end{figure}

\begin{proof} (\ref{87.5.b}). There is nothing to prove if $p\le 0$; so we assume that  $1\le p$.   
Notice that (\ref{87.5.gtsL}) in position $j$ is
{\tiny $$0\to \begin{matrix} \bigwedge^{\goth f-j}F'\otimes L^p_{\goth f-j-N}G\\\oplus\\
\bigwedge^{\goth f-j-1}F'\otimes L^p_{\goth f-j-N-1}G\otimes F''\otimes G''\end{matrix}
\xrightarrow{\incl^\dagger} {\textstyle\bigwedge^{\goth f-j}}F\otimes L^p_{\goth f-j-N}G \xrightarrow{\quot^\dagger} 
\begin{matrix} \bigwedge^{\goth f-j-1}F'\otimes L^p_{\goth f-j-N}G'\otimes F''\\\oplus\\
\chi((p,j)\neq (1,\goth f-N))\bigwedge^{\goth f-j-1}F'\otimes L^{p-1}_{\goth f-j-N}G'\otimes F''\otimes G''\end{matrix}\to 0,$$}

\noindent which is \ref{8-21-0}.\ref{8-21-0.a} with $r$ replaced by $\goth f-j$ and $q$ replaced by $\goth f-j-N$; 
hence, each row of (\ref{87.5.gtsL}) is a short exact sequence. To see that that 
 (\ref{87.5.gtsL})  is a map of complexes, one  verifies that
  Figure~\ref{Aug-23-2} is a commutative diagram, and this is  straightforward. Thus, 
(\ref{87.5.gtsL}) is a short exact sequence of complexes.

The assertion (\ref{87.5.cncl'}) is now obvious. Indeed, the hypotheses that $\Phi''$ is an isomorphism ensures that the complex $\Tot\Big(\mathbb L^{N,p}_{\pi'\circ \Phi}\otimes \mathbb A(\Phi'')\Big)$
on the left side of (\ref{87.5.gtsL}) has homology zero and the long exact sequence of homology associated to the short exact sequence of complexes (\ref{87.5.gtsL}) yields (\ref{87.5.cncl'}). 

\medskip\noindent(\ref{87.5.a}). Assume that $1\le p$. Notice that (\ref{87.5.gts}) in position $j$ is
{\footnotesize $$0\to \begin{matrix} \bigwedge^{N-j}F'\otimes K^p_{j}({G'}^*)\\\oplus\\
\bigwedge^{N-j}F'\otimes K^{p-1}_{j}({G'}^*)\otimes {G''}^*\end{matrix}
\xrightarrow{\incl^\ddag} {\textstyle\bigwedge^{N-j}}F\otimes K^p_{j}(G^*) \xrightarrow{\quot^\ddag} 
\begin{matrix} 
\bigwedge^{N-j}F'\otimes K^{p}_{j-1}(G^*)\otimes {G''}^*
\\\oplus\\
\bigwedge^{N-j-1}F'\otimes K^p_{j}(G^*)\otimes F''
\end{matrix}\to 0,$$}

\noindent which is \ref{8-21-0}.\ref{8-21-0.b} with $r$ replaced by $N-j$ and $q$ replaced by $j$. The parameter $p$ is is not zero by hypothesis; hence, 
Observation~\ref{8-21-0}.\ref{8-21-0.b} ensures that
each row of (\ref{87.5.gts}) is a short exact sequence. To see that that 
 (\ref{87.5.gts})  is a map of complexes, one  verifies that
  Figure~\ref{Aug-23-3} is a commutative diagram, and this is  straightforward. Thus, 
(\ref{87.5.gts}) is a short exact sequence of complexes.
The assertion (\ref{87.5.cncl}) is now obvious, as in the proof of (\ref{87.5.cncl'}). 
\end{proof}

\bigskip

\section{The definition and elementary properties of the complexes $\mathcal C_\Phi^{i,a}$.}

\bigskip
The maps and modules of $\mathcal C_\Phi^{i,a}$ are introduced in \ref{CiaPhi}. It is shown in \ref{complex} that each $\mathcal C_\Phi^{i,a}$ is a complex. Examples are given in \ref{3.3} and \ref{5.6}. The relationship between the classical generalized Eagon-Northcott complexes $\{\mathcal C_\Phi^{i}\}$ and the complexes $\{\mathcal C_\Phi^{i,a}\}$ is examined in \ref{new=old}. The duality in the family $\{\mathcal C_\Phi^{i,a}\}$ is studied in \ref{duality}. Information about the length of $\mathcal C_\Phi^{i,a}$ is recorded in \ref{3.7}.
The zero-th homology of $\mathcal C_\Phi^{i,a}$, for $-1\le i$, may be found in \ref{H0}. The fact that $I_\goth g(\Phi)$ annihilates $\HH_0(\mathcal C_\Phi^{i,a})$, for $-1\le i$, is established in \ref{ann(H0)}.

\begin{data}\label{setup}Let $R$ be a commutative Noetherian ring, $F$ and $G$ be free $R$-modules of rank $\goth f$ and $\goth g$, respectively, with $\goth g\le \goth f$, $\Phi:G^*\to F$ be an $R$-module homomorphism.    
\end{data}
\begin{definition}\label{CiaPhi}Adopt Data~{\rm{\ref{setup}}}. Recall the complexes $\mathbb K_{\Phi}^{N,p}$ and $\mathbb L_{\Phi}^{N,p}$ of Definition~{\rm{\ref{8mathbb-E}.\ref{8mathbb-E.c}}} and Remark~{\rm\ref{5.3}.\ref{5.3.c}} and the homomorphism $\kappa$ of \ref{2.3}. Let $i$ and $a$ be integers with $1\le a\le \goth g$. Define the maps and modules $(\mathcal C^{i,a}_\Phi,d)$ to be 
\begin{equation}\label{FirstRecipe}\textstyle 0\to \mathbb K^{\goth f-\goth g-i-1,a}_\Phi[-i-2]\xrightarrow{d} \bigwedge^{\goth f-\goth g+a-i-1}F \xrightarrow{d} \mathbb L^{\goth f-i-1,\goth g-a}_\Phi\to 0,\end{equation}with $[\mathcal C^{i,a}_\Phi]_{i+1}=\bigwedge^{f-g+a-i-1}F$. 
The differentials 
$$[\mathcal C^{i,a}_\Phi]_{i+2}\xrightarrow{d_{i+2}} [\mathcal C^{i,a}_\Phi]_{i+1}\xrightarrow{d_{i+i}}[\mathcal C^{i,a}_\Phi]_i$$
are
\begin{align*}\textstyle[\mathcal C^{i,a}_\Phi]_{i+2}&\textstyle=[\mathbb K^{\goth f-\goth g-i-1,a}_\Phi]_0=\bigwedge^{\goth f-\goth g-i-1}F\otimes K_0^a(G^*)=\bigwedge^{\goth f-\goth g-i-1}F\otimes \bigwedge^a(G^*)\\&\textstyle\xrightarrow{1\otimes \bigwedge^a \Phi}\bigwedge^{\goth f-\goth g-i-1}F\otimes \bigwedge^aF\xrightarrow{\mult}\bigwedge^{\goth f-\goth g+a-i-1}F=[\mathcal C^{i,a}_\Phi]_{i+1}\\\intertext{and}
[\mathcal C^{i,a}_\Phi]_{i+1}&\textstyle=\bigwedge^{\goth f-\goth g+a-i-1}F
\xrightarrow{1\otimes \ev^*(1)}\bigwedge^{\goth f-\goth g+a-i-1}F\otimes \bigwedge^{\goth g+1-a}(G^*)\otimes \bigwedge^{\goth g+1-a}G\\&\textstyle\xrightarrow{1\otimes \bigwedge^{\goth g+1-a}\Phi\otimes \kappa}
\bigwedge^{\goth f-\goth g+a-i-1}F\otimes \bigwedge^{\goth g+1-a}F\otimes L_1^{\goth g-a}G\\&\textstyle\xrightarrow{\mult\otimes 1}
\bigwedge^{\goth f-i}F\otimes L_1^{\goth g-a}G
=[\mathbb L^{\goth f-i-1,\goth g-a}_\Phi]_i=[\mathcal C^{i,a}_\Phi]_i.
\end{align*}
\end{definition}

\begin{remark}\label{SecondRecipe}We give two other descriptions of the modules in $\mathcal C^{i,a}_\Phi$:
\begin{equation}\label{spot-by-spot}[\mathcal C^{i,a}_\Phi]_j=\begin{cases}
\bigwedge^{\goth f-j}F\otimes L_{i+1-j}^{\goth g -a}G,&\text{if $j\le i$,}\\
\bigwedge^{\goth f-\goth g+a-i-1}F,&\text{if $j=i+1$, and}\\
\bigwedge^{\goth f-\goth g+1-j}F\otimes K_{j-i-2}^{a}(G^*),&\text{if $i+2\le j$,}\end{cases}\end{equation}
and $\mathcal C^{i,a}_\Phi$ looks like 
\begin{align}\label{all-at-once}\textstyle \cdots \xrightarrow{\eta_\Phi} \bigwedge ^{\goth f-\goth g-i-2}F\otimes K_1^a (G^*) \xrightarrow{\eta_\Phi} \bigwedge ^{\goth f-\goth g-i-1}F\otimes K_0^a (G^*) \xrightarrow{\bigwedge^a\Phi} \bigwedge ^{\goth f-\goth g+a-i-1}F\vspace{5pt} \\\textstyle
\xrightarrow{\bigwedge^{\goth g+1-a}\Phi} \bigwedge ^{\goth f-i}F
\otimes L_1^{\goth g-a} G \xrightarrow{\Kos_\Phi} 
\bigwedge ^{\goth f-i+1}F
\otimes L_2^{\goth g-a} G \xrightarrow{\Kos_\Phi} \cdots,\notag\end{align}
with  $\bigwedge ^{\goth f-\goth g+a-i-1}F$ in position $i+1$. 
\end{remark}

\begin{observation}\label{complex} Adopt Data~{\rm\ref{setup}}. Let and $i$ and $a$ be integers with $1\le a\le \goth g$.
Then the maps and modules $(\mathcal C_{\Phi}^{i,a},d)$ of Definition~{\rm{\ref{CiaPhi}}} form a complex. 
\end{observation}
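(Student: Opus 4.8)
The plan is to verify $d\circ d = 0$ at every pair of consecutive positions in $\mathcal C^{i,a}_\Phi$, using the description (\ref{all-at-once}). There are four types of consecutive pairs to check, so I would organize the argument accordingly. First, inside the tail coming from $\mathbb L^{\goth f-i-1,\goth g-a}_\Phi$, all consecutive maps are $\Kos_\Phi$, and the composite vanishes because $\mathbb L^{N,p}_\Phi$ is already known to be a complex; see Remark~\ref{5.3}.\ref{5.3.c}, where the relevant commuting square exhibits $\Kos_\Phi$ as a self-map of $L^p_\bullet G$-valued modules induced from a genuine chain map on $\bigwedge^\bullet F\otimes\bigwedge^\bullet G\otimes\Sym_\bullet G$. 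Dually, inside the head coming from $\mathbb K^{\goth f-\goth g-i-1,a}_\Phi[-i-2]$, all consecutive maps are $\eta_\Phi$, and $\eta_\Phi\circ\eta_\Phi=0$ for the same reason: $\mathbb K^{N,p}_\Phi$ is a complex. So the only genuinely new verifications occur at the two positions straddling $\bigwedge^{\goth f-\goth g+a-i-1}F$ in position $i+1$.

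Next I would treat the composite $[\mathcal C^{i,a}_\Phi]_{i+2}\to[\mathcal C^{i,a}_\Phi]_{i+1}\to[\mathcal C^{i,a}_\Phi]_i$, i.e. $d_{i+1}\circ d_{i+2}$. Writing it out using Definition~\ref{CiaPhi}, this is, up to the multiplication maps in $\bigwedge^\bullet F$, the composite
\[
\textstyle\bigwedge^{\goth f-\goth g-i-1}F\otimes\bigwedge^a(G^*)\xrightarrow{1\otimes\bigwedge^a\Phi}\bigwedge^{\goth f-\goth g+a-i-1}F\xrightarrow{1\otimes\ev^*(1)}\cdots\xrightarrow{1\otimes\bigwedge^{\goth g+1-a}\Phi\otimes\kappa}\cdots.
\]
The key point is that an element of $\bigwedge^a(G^*)$, after being mapped by $\bigwedge^a\Phi$ into $\bigwedge^aF$ and then paired (via $\ev^*(1)$ and the module action $\modact$ of $\bigwedge^\bullet(G^*)$ on $\bigwedge^\bullet G$) against a factor of $\bigwedge^{\goth g+1-a}\Phi(\bigwedge^{\goth g+1-a}(G^*))$, produces an element of $\bigwedge^{\goth g+1}G$ wedged with things — but $\bigwedge^{\goth g+1}G=0$ since $\rank G=\goth g$. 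More precisely, I would use Proposition~\ref{A3}, most likely parts \ref{A3.a} and \ref{A3.d}, to push the two copies of $\Phi$ (the $\bigwedge^a\Phi$ and the $\bigwedge^{\goth g+1-a}\Phi$) past each other and show the composite factors through a wedge product $\bigwedge^a(G^*)\wedge\bigwedge^{\goth g+1-a}(G^*)\to\bigwedge^{\goth g+1}(G^*)=0$, or through the corresponding vanishing on the $G$ side after the module action is taken into account. This is the step I expect to be the main obstacle: the bookkeeping of signs and of which copy of the exterior algebra the relevant element lands in is delicate, and it requires carefully tracking how $\kappa$ (the Koszul differential defining $L_1^{\goth g-a}G$) interacts with the $\bigwedge^{\goth g+1-a}\Phi$ factor.

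Finally I would check $d_{i+2}\circ d_{i+3}$ and $d_{i}\circ d_{i+1}$ — the two composites where exactly one of the two maps is the "exceptional" map ($\bigwedge^a\Phi$ or $\bigwedge^{\goth g+1-a}\Phi$) and the other is $\eta_\Phi$ or $\Kos_\Phi$ respectively. For $d_{i+2}\circ d_{i+3}$ I would use the commuting square in Remark~\ref{5.3}.\ref{5.3.c} (the $\mathbb K$ version) together with the observation that $\bigwedge^a\Phi$ on $K_0^a(G^*)=\bigwedge^a(G^*)$ is the $\eta_\Phi$-compatible boundary map: concretely, $\eta_\Phi$ followed by $\bigwedge^a\Phi$ equals (up to sign) $\bigwedge^a\Phi$ followed by the appropriate multiplication, and one invokes $\eta_\Phi\circ\eta_\Phi=0$ in $\mathbb K^{\goth f-\goth g-i-1,a}_\Phi$ restricted through $K_1^a(G^*)\hookrightarrow\bigwedge^a(G^*)\otimes D_1(G^*)$. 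The composite $d_i\circ d_{i+1}$ is handled dually using the $\mathbb L$ square of Remark~\ref{5.3}.\ref{5.3.c}: $\bigwedge^{\goth g+1-a}\Phi$ followed by $\Kos_\Phi$ agrees with the $\Kos_\Phi$-structure map on $L_\bullet^{\goth g-a}G$, and one uses that $\mathbb L^{\goth f-i-1,\goth g-a}_\Phi$ is a complex together with the fact that the image of $\kappa$ defining $L_1^{\goth g-a}G$ lies in the kernel of the next $\kappa$. In all of these, the underlying mechanism is the same: our maps are induced from honest chain maps on tensor products of (co)Koszul strands, and the exceptional maps are precisely the connecting pieces that splice these strands together compatibly, so associativity of multiplication and the module actions, plus the vanishing $\bigwedge^{\goth g+1}G=\bigwedge^{\goth g+1}(G^*)=0$, do all the work. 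I would present parts one and three briefly by citation to Remark~\ref{5.3}.\ref{5.3.c}, and spend the bulk of the proof on the central composite $d_{i+1}\circ d_{i+2}$.
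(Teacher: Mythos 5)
Your overall plan is the same as the paper's: regard $\mathcal C^{i,a}_\Phi$ as a splice of $\mathbb K$, a single exterior power, and $\mathbb L$; cite the fact that $\mathbb K^{N,p}_\Phi$ and $\mathbb L^{N,p}_\Phi$ are already complexes; and then check the three new composites spanning position $i+1$. Your analysis of the central composite $d_{i+1}\circ d_{i+2}$ is also right and matches the paper's mechanism: after combining the two $\Phi$-applications, the element of $\bigwedge^\bullet(G^*)$ produced has degree $a+(\goth g+1-a)=\goth g+1>\operatorname{rank}G^*$, so it dies.

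Where the proposal has a real gap is in the two flanking composites, and interestingly you have the relative difficulty backwards. The central composite is in fact the \emph{easy} one (the paper dispatches it in three lines, exactly via $\bigwedge^{\goth g+1}(G^*)=0$), whereas $d_i\circ d_{i+1}$ is the substantial calculation. Your proposed justification for $d_i\circ d_{i+1}=0$ --- ``$\bigwedge^{\goth g+1-a}\Phi$ followed by $\Kos_\Phi$ agrees with the $\Kos_\Phi$-structure map on $L^{\goth g-a}_\bullet G$, and one uses that $\mathbb L^{\goth f-i-1,\goth g-a}_\Phi$ is a complex together with the fact that the image of $\kappa$ lies in the kernel of the next $\kappa$'' --- does not close. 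The map $d_{i+1}$ is not one of the differentials of $\mathbb L$, so ``$\mathbb L$ is a complex'' says nothing about $d_i\circ d_{i+1}$, and the fact that $L^{\goth g-a}_1G=\ker\kappa$ is merely what makes $d_{i+1}$ well-defined, not what makes $d_i\circ d_{i+1}$ vanish. The paper's actual argument writes out $(d_i\circ d_{i+1})(f)$ as a triple sum over canonical elements $\ev^*(1)$, applies the regrouping identity
\[
\sum_\ell\sum_{\ell'}m_\ell^*\wedge n_{\ell'}^*\otimes m_\ell\otimes n_{\ell'}
=\sum_L\sum_{\ell'}M_L^*\otimes n_{\ell'}^*(M_L)\otimes n_{\ell'}
\quad\text{in}\quad
\textstyle\bigwedge^{\goth g+2-a}(G^*)\otimes\bigwedge^{\goth g+1-a}G\otimes G,
\]
and then finishes with the Koszul antisymmetry
$\sum_{\ell'}\sum_{\ell''}n_{\ell'}^*\wedge n_{\ell''}^*\otimes n_{\ell'}\cdot n_{\ell''}=0$ in $\bigwedge^2(G^*)\otimes\Sym_2G$.
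So the operative vanishing here is \emph{not} $\bigwedge^{\goth g+1}G=0$ or $\bigwedge^{\goth g+1}(G^*)=0$; it is the usual wedge/symmetric-power cancellation. Your closing remark that the exterior-power vanishing ``does all the work'' is therefore incorrect for this composite. Likewise, for $d_{i+2}\circ d_{i+3}$ the relevant fact is the \emph{defining} relation of $K^a_1(G^*)$ as $\ker\eta_{\id_{G^*}}\subseteq\bigwedge^a(G^*)\otimes G^*$ (i.e., $\sum_k\alpha_k\wedge\beta_k=0$), not ``$\eta_\Phi\circ\eta_\Phi=0$'' in $\mathbb K$, which is a different statement. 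Finally, a route you do not take but the paper does: Observation~\ref{duality} lets one drop $d_{i+2}\circ d_{i+3}$ altogether, since it is the dual of $d_{i'}\circ d_{i'+1}$ for $\mathcal C^{\goth f-\goth g-i-1,\goth g+1-a}_\Phi$, leaving only two explicit computations.
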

\begin{proof}One obtains $\mathcal C_{\Phi}^{i,a}$ by pasting together two well-known complexes; hence it suffices to show that 
$$(\mathcal C_{\Phi}^{i,a})_{i+3}\xrightarrow{d_{i+3}}(\mathcal C_{\Phi}^{i,a})_{i+2}\xrightarrow{d_{i+2}}
(\mathcal C_{\Phi}^{i,a})_{i+1}\xrightarrow{d_{i+1}}
(\mathcal C_{\Phi}^{i,a})_{i}\xrightarrow{d_{i}}
(\mathcal C_{\Phi}^{i,a})_{i-1}$$is a complex; furthermore, by the duality of Observation~\ref{duality}, it suffices to show that $$
(\mathcal C_{\Phi}^{i,a})_{i+2}\xrightarrow{d_{i+2}}
(\mathcal C_{\Phi}^{i,a})_{i+1}\xrightarrow{d_{i+1}}
(\mathcal C_{\Phi}^{i,a})_{i}\xrightarrow{d_{i}}
(\mathcal C_{\Phi}^{i,a})_{i-1}$$is a complex. Take $f\in \bigwedge^{\goth f-\goth g+a-i-1}F=(\mathcal C_{\Phi}^{i,a})_{i+1}$. We compute
\begingroup\allowdisplaybreaks\begin{align*}(d_i\circ d_{i+1})(f)&= \textstyle d_i\left(\sum_\ell
f\wedge(\bigwedge^{\goth g +1-a}\Phi)(m^*_\ell)\otimes \kappa(m_\ell)
\in \bigwedge^{\goth f-i}F\otimes L^{\goth g-a}_1 
G=(\mathcal C_{\Phi}^{i,a})_{i}
\right)\\
&= \textstyle \sum_\ell\sum_{\ell'}
f\wedge(\bigwedge^{\goth g +1-a}\Phi)(m^*_\ell)\wedge \Phi(n^*_{\ell'})\otimes \kappa(m_\ell)\cdot(1\otimes n_{\ell'})\\
&\hskip20pt\textstyle  \in \bigwedge^{\goth f-i+1}F\otimes L^{\goth g-a}_2 
G=(\mathcal C_{\Phi}^{i,a})_{i-1}
\\
&= \textstyle \sum_\ell\sum_{\ell'}\sum_{\ell''}
f\wedge(\bigwedge^{\goth g +2-a}\Phi)(m^*_\ell\wedge n^*_{\ell'})\otimes  n_{\ell''}^*(m_\ell)\otimes n_{\ell''}\cdot n_{\ell'}\\
&= \textstyle \sum_L\sum_{\ell'}\sum_{\ell''}
f\wedge(\bigwedge^{\goth g +2-a}\Phi)(M^*_L)\otimes  n_{\ell''}^*(n_{\ell'}^*(M))\otimes n_{\ell''}\cdot n_{\ell'}\\&=0,
\end{align*}\endgroup
where \begingroup\allowdisplaybreaks\begin{align*}\ev^*(1)&\textstyle =\sum_{\ell} m_{\ell}^*\otimes m_{\ell} &&\textstyle \in \bigwedge^{\goth g+1-a}G^*\otimes \bigwedge^{\goth g+1-a}G,\\
\ev^*(1)&\textstyle =\sum_{\ell'} n_{\ell'}^*\otimes n_{\ell'}
=\sum_{\ell''} n_{\ell''}^*\otimes n_{\ell''}
 &&\textstyle \in G^*\otimes G, \text{ and}\\
\ev^*(1)&\textstyle =\sum_{L} M_{L}^*\otimes M_{L} &&\textstyle \in \bigwedge^{\goth g+2-a}G^*\otimes \bigwedge^{\goth g+2-a}G
\end{align*}\endgroup
are the canonical elements of (\ref{ev-star}). One easily verifies that
\begin{equation}\notag\textstyle \sum_\ell\sum_{\ell'}m_\ell^*\wedge n_{\ell'}^*\otimes m_{\ell}
\otimes n_{\ell'}=\sum_L\sum_{\ell'}M_L^*\otimes n_{\ell'}^*(M_{L})
\otimes n_{\ell'}\in \bigwedge^{\goth g+2-a}G^*\otimes \bigwedge^{\goth g+1-a}G \otimes G.\end{equation}(Merely evaluate both sides at a typical element of $\bigwedge^{\goth g+2-a}G\otimes \bigwedge^{\goth g+1-a}G^* \otimes G^*$.) It is obvious that 
\begin{equation}\notag
\textstyle\sum_{\ell'}\sum_{\ell''} n_{\ell'}^*\wedge n_{\ell''}^*\otimes n_{\ell'}\cdot n_{\ell''}=0\in \bigwedge^2G^*\otimes \Sym_2 G.\end{equation}

Take $f\in \bigwedge^{\goth f-\goth g-i-1}F$ and $\gamma\in \bigwedge^a(G^*)=K_0^a(G^*)$, see \ref{2.8}.\ref{2.8.h}. So, $f\otimes \gamma$ is in $(\mathcal C_\Phi^{i,a})_{i+2}$. We compute
\begingroup\allowdisplaybreaks\begin{align*}(d_{i+1}\circ d_{i+2})(f\otimes \gamma)&\textstyle= d_{i+1}\left(f\wedge (\bigwedge^a\Phi)(\gamma)\in \bigwedge^{\goth f-\goth g+a-i-1}F=(\mathcal C_\Phi^{i,a})_{i+1}\right)\\
&\textstyle= \sum_{\ell}f\wedge (\bigwedge^a\Phi)(\gamma)\wedge (\bigwedge^{\goth g-a+1}\Phi)(m_\ell^*)\otimes \kappa(m_{\ell})\\&\textstyle\hskip20pt\in \bigwedge^{\goth f-i}F\otimes L^{\goth g-a}_1G=(\mathcal C_\Phi^{i,a})_{i}\\
&\textstyle= \sum_{\ell}f\wedge (\bigwedge^{\goth g+1}\Phi)(\gamma\wedge m_\ell^*)\otimes \kappa(m_{\ell})\\&=0.
\end{align*}\endgroup 
\vskip-20pt\end{proof}

\begin{remark}\label{July-6} If $a$ is equal to $0$ or $\goth g+1$, then it is possible to construct a complex $\mathcal C^{i,a}_{\Phi}$ using the recipe of Remark~\ref{SecondRecipe}. These complexes are 
$$\textstyle \mathcal C^{i,0}_{\Phi}:\quad 0\to \bigwedge^{\goth f-\goth g-i-1}F \xrightarrow{\text{identity map}} \bigwedge^{\goth f-\goth g-i-1}F\to 0,$$ with the non-zero modules appearing in positions $i+2$ and $i+1$; 
and
$$\textstyle \mathcal C^{i,\goth g+1}_{\Phi}:\quad 0\to  \bigwedge^{\goth f-i}F\to 0,$$ with the non-zero module appearing in position $i+1$.
\end{remark}

\begin{example}\label{3.3} Adopt Data~{\rm\ref{setup}}  
with  $(\goth g,\goth f)=(5,9)$. We record the complexes $\mathcal C_{\Phi}^{i,a}$ of Definition~{\rm\ref{CiaPhi}} and Observation~{\rm\ref{complex}} which have the form:
$$0\to (\mathcal C_{\Phi}^{i,a})_5\to (\mathcal C_{\Phi}^{i,a})_4\to (\mathcal C_{\Phi}^{i,a})_3\to (\mathcal C_{\Phi}^{i,a})_2\to (\mathcal C_{\Phi}^{i,a})_1
\to (\mathcal C_{\Phi}^{i,a})_0\to 0.$$Of course, these complexes have length 
$\goth f-\goth g+1=5$.

\begingroup\allowdisplaybreaks\begin{align}\scriptscriptstyle &\scriptscriptstyle \mathcal C_{\Phi}^{-1,1}:\ 0\to\bigwedge^0F\otimes K^1_{4}(G^*)\xrightarrow{\eta_\Phi} \bigwedge^1F\otimes K^1_{3}(G^*)\xrightarrow{\eta_\Phi}\bigwedge^2F\otimes K^1_{2}(G^*)\xrightarrow{\eta_\Phi} \bigwedge^3F\otimes K^1_{1}(G^*)\xrightarrow{\eta_\Phi}
\bigwedge^4F\otimes K^1_{0}(G^*)\xrightarrow{\bigwedge^1\Phi} \bigwedge ^5F
\to 0 \notag\\
&\scriptscriptstyle \mathcal C_{\Phi}^{0,1}:\ 0\to\bigwedge^0F\otimes K^1_{3}(G^*)\xrightarrow{\eta_\Phi} \bigwedge^1F\otimes K^1_{2}(G^*)\xrightarrow{\eta_\Phi}\bigwedge^2F\otimes K^1_{1}(G^*)\xrightarrow{\eta_\Phi} \bigwedge^3F\otimes K^1_{0}(G^*)\xrightarrow{\bigwedge^1\Phi}
\bigwedge^4F\xrightarrow{\bigwedge^5\Phi} \bigwedge ^9F\otimes L^4_{1}G
\to 0 \notag\\
&\scriptscriptstyle \mathcal C_{\Phi}^{1,1}:\ 0\to\bigwedge^0F\otimes K^1_{2}(G^*)\xrightarrow{\eta_\Phi} \bigwedge^1F\otimes K^1_{1}(G^*)\xrightarrow{\eta_\Phi}\bigwedge^2F\otimes K^1_{0}(G^*)\xrightarrow{\bigwedge^1\Phi} \bigwedge^3F\xrightarrow{\bigwedge^5\Phi}
\bigwedge^8F\otimes L^4_{1}G\xrightarrow{\Kos_\Phi} \bigwedge ^9F\otimes L^4_{2}G
\to 0 \notag\\
&\scriptscriptstyle \mathcal C_{\Phi}^{2,1}:\ 0\to\bigwedge^0F\otimes K^1_{1}(G^*)\xrightarrow{\eta_\Phi} \bigwedge^1F\otimes K^1_{0}(G^*)\xrightarrow{\bigwedge^1\Phi}\bigwedge^2F\xrightarrow{\bigwedge^5\Phi} \bigwedge^7F\otimes L^4_{1}G\xrightarrow{\Kos_\Phi} 
\bigwedge^8F\otimes L^4_{2}G\xrightarrow{\Kos_\Phi} \bigwedge ^9F\otimes L^4_{3}G
\to 0 \notag\\
&\scriptscriptstyle \mathcal C_{\Phi}^{3,1}:\ 0\to\bigwedge^0F\otimes K^1_{0}(G^*)\xrightarrow{\bigwedge^1\Phi} \bigwedge^1F\xrightarrow{\bigwedge^5\Phi}\bigwedge^6F\otimes L^4_{1}G\xrightarrow{\Kos_\Phi} \bigwedge^7F\otimes L^4_{2}G\xrightarrow{\Kos_\Phi}
\bigwedge^8F\otimes L^4_{3}G\xrightarrow{\Kos_\Phi} \bigwedge ^9F\otimes L^4_{4}G
\to 0 \notag\\
&\scriptscriptstyle \mathcal C_{\Phi}^{4,1}:\ 0\to\bigwedge^0F\xrightarrow{\bigwedge^5\Phi} \bigwedge^5F\otimes L^4_{1}G\xrightarrow{\Kos_\Phi}\bigwedge^6F\otimes L^4_{2}G\xrightarrow{\Kos_\Phi} \bigwedge^7F\otimes L^4_{3}G\xrightarrow{\Kos_\Phi}
\bigwedge^8F\otimes L^4_{4}G\xrightarrow{\Kos_\Phi} \bigwedge ^9F\otimes L^4_{5}G\to 0 \notag\\
&\scriptscriptstyle \mathcal C_{\Phi}^{5,1}:\ 0\to
\bigwedge^4F\otimes L^4_{1}G
\xrightarrow{\Kos_\Phi} \bigwedge^5F\otimes L^4_{2}G\xrightarrow{\Kos_\Phi}\bigwedge^6F\otimes L^4_{3}G\xrightarrow{\Kos_\Phi} \bigwedge^7F\otimes L^4_{4}G\xrightarrow{\Kos_\Phi}
\bigwedge^8F\otimes L^4_{5}G\xrightarrow{\Kos_\Phi} \bigwedge ^9F\otimes L^4_{6}G
\to 0 \notag
\\\hline
\scriptscriptstyle &\scriptscriptstyle \mathcal C_{\Phi}^{-1,2}:\ 0\to\bigwedge^0F\otimes K^2_{4}(G^*)\xrightarrow{\eta_\Phi} \bigwedge^1F\otimes K^2_{3}(G^*)\xrightarrow{\eta_\Phi}\bigwedge^2F\otimes K^2_{2}(G^*)\xrightarrow{\eta_\Phi} \bigwedge^3F\otimes K^2_{1}(G^*)\xrightarrow{\eta_\Phi}
\bigwedge^4F\otimes K^2_{0}(G^*)\xrightarrow{\bigwedge^2\Phi} \bigwedge ^6F
\to 0 \notag\\
&\scriptscriptstyle \mathcal C_{\Phi}^{0,2}:\ 0\to\bigwedge^0F\otimes K^2_{3}(G^*)\xrightarrow{\eta_\Phi} \bigwedge^1F\otimes K^2_{2}(G^*)\xrightarrow{\eta_\Phi}\bigwedge^2F\otimes K^2_{1}(G^*)\xrightarrow{\eta_\Phi} \bigwedge^3F\otimes K^2_{0}(G^*)\xrightarrow{\bigwedge^2\Phi}
\bigwedge^5F\xrightarrow{\bigwedge^4\Phi} \bigwedge ^9F\otimes L^3_{1}G
\to 0 \notag\\
&\scriptscriptstyle \mathcal C_{\Phi}^{1,2}:\ 0\to\bigwedge^0F\otimes K^2_{2}(G^*)\xrightarrow{\eta_\Phi} \bigwedge^1F\otimes K^2_{1}(G^*)\xrightarrow{\eta_\Phi}\bigwedge^2F\otimes K^2_{0}(G^*)\xrightarrow{\bigwedge^2\Phi} \bigwedge^4F\xrightarrow{\bigwedge^4\Phi}
\bigwedge^8F\otimes L^3_{1}G\xrightarrow{\Kos_\Phi} \bigwedge ^9F\otimes L^3_{2}G
\to 0 \notag\\
&\scriptscriptstyle \mathcal C_{\Phi}^{2,2}:\ 0\to\bigwedge^0F\otimes K^2_{1}(G^*)\xrightarrow{\eta_\Phi} \bigwedge^1F\otimes K^2_{0}(G^*)\xrightarrow{\bigwedge^2\Phi}\bigwedge^3F\xrightarrow{\bigwedge^4\Phi} \bigwedge^7F\otimes L^3_{1}G\xrightarrow{\Kos_\Phi} 
\bigwedge^8F\otimes L^3_{2}G\xrightarrow{\Kos_\Phi} \bigwedge ^9F\otimes L^3_{3}G
\to 0 \notag\\
&\scriptscriptstyle \mathcal C_{\Phi}^{3,2}:\ 0\to\bigwedge^0F\otimes K^2_{0}(G^*)\xrightarrow{\bigwedge^2\Phi} \bigwedge^2F\xrightarrow{\bigwedge^4\Phi}\bigwedge^6F\otimes L^3_{1}G\xrightarrow{\Kos_\Phi} \bigwedge^7F\otimes L^3_{2}G\xrightarrow{\Kos_\Phi}
\bigwedge^8F\otimes L^3_{3}G\xrightarrow{\Kos_\Phi} \bigwedge ^9F\otimes L^3_{4}G
\to 0 \notag\\
&\scriptscriptstyle \mathcal C_{\Phi}^{4,2}:\ 0\to\bigwedge^1F\xrightarrow{\bigwedge^4\Phi} \bigwedge^5F\otimes L^3_{1}G\xrightarrow{\Kos_\Phi}\bigwedge^6F\otimes L^3_{2}G\xrightarrow{\Kos_\Phi} \bigwedge^7F\otimes L^3_{3}G\xrightarrow{\Kos_\Phi}
\bigwedge^8F\otimes L^3_{4}G\xrightarrow{\Kos_\Phi} \bigwedge ^9F\otimes L^3_{5}G\to 0 \notag\\\hline
\scriptscriptstyle &\scriptscriptstyle \mathcal C_{\Phi}^{-1,3}:\ 0\to\bigwedge^0F\otimes K^3_{4}(G^*)\xrightarrow{\eta_\Phi} \bigwedge^1F\otimes K^3_{3}(G^*)\xrightarrow{\eta_\Phi}\bigwedge^2F\otimes K^3_{2}(G^*)\xrightarrow{\eta_\Phi} \bigwedge^3F\otimes K^3_{1}(G^*)\xrightarrow{\eta_\Phi}
\bigwedge^4F\otimes K^3_{0}(G^*)\xrightarrow{\bigwedge^3\Phi} \bigwedge ^7F
\to 0 \notag\\
&\scriptscriptstyle \mathcal C_{\Phi}^{0,3}:\ 0\to\bigwedge^0F\otimes K^3_{3}(G^*)\xrightarrow{\eta_\Phi} \bigwedge^1F\otimes K^3_{2}(G^*)\xrightarrow{\eta_\Phi}\bigwedge^2F\otimes K^3_{1}(G^*)\xrightarrow{\eta_\Phi} \bigwedge^3F\otimes K^3_{0}(G^*)\xrightarrow{\bigwedge^3\Phi}
\bigwedge^6F\xrightarrow{\bigwedge^3\Phi} \bigwedge ^9F\otimes L^2_{1}G
\to 0 \notag\\
&\scriptscriptstyle \mathcal C_{\Phi}^{1,3}:\ 0\to\bigwedge^0F\otimes K^3_{2}(G^*)\xrightarrow{\eta_\Phi} \bigwedge^1F\otimes K^3_{1}(G^*)\xrightarrow{\eta_\Phi}\bigwedge^2F\otimes K^3_{0}(G^*)\xrightarrow{\bigwedge^3\Phi} \bigwedge^5F\xrightarrow{\bigwedge^3\Phi}
\bigwedge^8F\otimes L^2_{1}G\xrightarrow{\Kos_\Phi} \bigwedge ^9F\otimes L^2_{2}G
\to 0 \notag\\
&\scriptscriptstyle \mathcal C_{\Phi}^{2,3}:\ 0\to\bigwedge^0F\otimes K^3_{1}(G^*)\xrightarrow{\eta_\Phi} \bigwedge^1F\otimes K^3_{0}(G^*)\xrightarrow{\bigwedge^3\Phi}\bigwedge^4F\xrightarrow{\bigwedge^3\Phi} \bigwedge^7F\otimes L^2_{1}G\xrightarrow{\Kos_\Phi} 
\bigwedge^8F\otimes L^2_{2}G\xrightarrow{\Kos_\Phi} \bigwedge ^9F\otimes L^2_{3}G
\to 0 \notag\\
&\scriptscriptstyle \mathcal C_{\Phi}^{3,3}:\ 0\to\bigwedge^0F\otimes K^3_{0}(G^*)\xrightarrow{\bigwedge^3\Phi} \bigwedge^3F\xrightarrow{\bigwedge^3\Phi}\bigwedge^6F\otimes L^2_{1}G\xrightarrow{\Kos_\Phi} \bigwedge^7F\otimes L^2_{2}G\xrightarrow{\Kos_\Phi}
\bigwedge^8F\otimes L^2_{3}G\xrightarrow{\Kos_\Phi} \bigwedge ^9F\otimes L^2_{4}G
\to 0 \notag\\
&\scriptscriptstyle \mathcal C_{\Phi}^{4,3}:\ 0\to\bigwedge^2F\xrightarrow{\bigwedge^3\Phi} \bigwedge^5F\otimes L^2_{1}G\xrightarrow{\Kos_\Phi}\bigwedge^6F\otimes L^2_{2}G\xrightarrow{\Kos_\Phi} \bigwedge^7F\otimes L^2_{3}G\xrightarrow{\Kos_\Phi}
\bigwedge^8F\otimes L^2_{4}G\xrightarrow{\Kos_\Phi} \bigwedge ^9F\otimes L^2_{5}G\to 0 \notag\\ \hline
\scriptscriptstyle &\scriptscriptstyle \mathcal C_{\Phi}^{-1,4}:\ 0\to\bigwedge^0F\otimes K^4_{4}(G^*)\xrightarrow{\eta_\Phi} \bigwedge^1F\otimes K^4_{3}(G^*)\xrightarrow{\eta_\Phi}\bigwedge^2F\otimes K^4_{2}(G^*)\xrightarrow{\eta_\Phi} \bigwedge^3F\otimes K^4_{1}(G^*)\xrightarrow{\eta_\Phi}
\bigwedge^4F\otimes K^4_{0}(G^*)\xrightarrow{\bigwedge^4\Phi} \bigwedge ^8F
\to 0 \notag\\
&\scriptscriptstyle \mathcal C_{\Phi}^{0,4}:\ 0\to\bigwedge^0F\otimes K^4_{3}(G^*)\xrightarrow{\eta_\Phi} \bigwedge^1F\otimes K^4_{2}(G^*)\xrightarrow{\eta_\Phi}\bigwedge^2F\otimes K^4_{1}(G^*)\xrightarrow{\eta_\Phi} \bigwedge^3F\otimes K^4_{0}(G^*)\xrightarrow{\bigwedge^4\Phi}
\bigwedge^7F\xrightarrow{\bigwedge^2\Phi} \bigwedge ^9F\otimes L^1_{1}G
\to 0 \notag\\
&\scriptscriptstyle \mathcal C_{\Phi}^{1,4}:\ 0\to\bigwedge^0F\otimes K^4_{2}(G^*)\xrightarrow{\eta_\Phi} \bigwedge^1F\otimes K^4_{1}(G^*)\xrightarrow{\eta_\Phi}\bigwedge^2F\otimes K^4_{0}(G^*)\xrightarrow{\bigwedge^4\Phi} \bigwedge^6F\xrightarrow{\bigwedge^2\Phi}
\bigwedge^8F\otimes L^1_{1}G\xrightarrow{\Kos_\Phi} \bigwedge ^9F\otimes L^1_{2}G
\to 0 \notag\\
&\scriptscriptstyle \mathcal C_{\Phi}^{2,4}:\ 0\to\bigwedge^0F\otimes K^4_{1}(G^*)\xrightarrow{\eta_\Phi} \bigwedge^1F\otimes K^4_{0}(G^*)\xrightarrow{\bigwedge^4\Phi}\bigwedge^5F\xrightarrow{\bigwedge^2\Phi} \bigwedge^7F\otimes L^1_{1}G\xrightarrow{\Kos_\Phi} 
\bigwedge^8F\otimes L^1_{2}G\xrightarrow{\Kos_\Phi} \bigwedge ^9F\otimes L^1_{3}G
\to 0 \notag\\
&\scriptscriptstyle \mathcal C_{\Phi}^{3,4}:\ 0\to\bigwedge^0F\otimes K^4_{0}(G^*)\xrightarrow{\bigwedge^4\Phi} \bigwedge^4F\xrightarrow{\bigwedge^2\Phi}\bigwedge^6F\otimes L^1_{1}G\xrightarrow{\Kos_\Phi} \bigwedge^7F\otimes L^1_{2}G\xrightarrow{\Kos_\Phi}
\bigwedge^8F\otimes L^1_{3}G\xrightarrow{\Kos_\Phi} \bigwedge ^9F\otimes L^1_{4}G
\to 0 \notag\\
&\scriptscriptstyle \mathcal C_{\Phi}^{4,4}:\ 0\to\bigwedge^3F\xrightarrow{\bigwedge^2\Phi} \bigwedge^5F\otimes L^1_{1}G\xrightarrow{\Kos_\Phi}\bigwedge^6F\otimes L^1_{2}G\xrightarrow{\Kos_\Phi} \bigwedge^7F\otimes L^1_{3}G\xrightarrow{\Kos_\Phi}
\bigwedge^8F\otimes L^1_{4}G\xrightarrow{\Kos_\Phi} \bigwedge ^9F\otimes L^1_{5}G\to 0 \notag\\ \hline
\scriptscriptstyle 
&\scriptscriptstyle \mathcal C_{\Phi}^{-2,5}:\ 0\to\bigwedge^1F\otimes K^5_{5}(G^*)\xrightarrow{\eta_\Phi} \bigwedge^2F\otimes K^5_{4}(G^*)\xrightarrow{\eta_\Phi}\bigwedge^3F\otimes K^5_{3}(G^*)\xrightarrow{\eta_\Phi} \bigwedge^4F\otimes K^5_{2}(G^*)\xrightarrow{\eta_\Phi}
\bigwedge^5F\otimes K^5_{1}(G^*)\xrightarrow{\eta_\Phi} \bigwedge^6F\otimes K^5_{0}(G^*)
\to 0 \notag\\
&\scriptscriptstyle \mathcal C_{\Phi}^{-1,5}:\ 0\to\bigwedge^0F\otimes K^5_{4}(G^*)\xrightarrow{\eta_\Phi} \bigwedge^1F\otimes K^5_{3}(G^*)\xrightarrow{\eta_\Phi}\bigwedge^2F\otimes K^5_{2}(G^*)\xrightarrow{\eta_\Phi} \bigwedge^3F\otimes K^5_{1}(G^*)\xrightarrow{\eta_\Phi}
\bigwedge^4F\otimes K^5_{0}(G^*)\xrightarrow{\bigwedge^5\Phi} \bigwedge ^9F
\to 0 \notag\\
&\scriptscriptstyle \mathcal C_{\Phi}^{0,5}:\ 0\to\bigwedge^0F\otimes K^5_{3}(G^*)\xrightarrow{\eta_\Phi} \bigwedge^1F\otimes K^5_{2}(G^*)\xrightarrow{\eta_\Phi}\bigwedge^2F\otimes K^5_{1}(G^*)\xrightarrow{\eta_\Phi} \bigwedge^3F\otimes K^5_{0}(G^*)\xrightarrow{\bigwedge^5\Phi}
\bigwedge^8F\xrightarrow{\bigwedge^1\Phi} \bigwedge ^9F\otimes L^0_{1}G
\to 0 \notag\\
&\scriptscriptstyle \mathcal C_{\Phi}^{1,5}:\ 0\to\bigwedge^0F\otimes K^5_{2}(G^*)\xrightarrow{\eta_\Phi} \bigwedge^1F\otimes K^5_{1}(G^*)\xrightarrow{\eta_\Phi}\bigwedge^2F\otimes K^5_{0}(G^*)\xrightarrow{\bigwedge^5\Phi} \bigwedge^7F\xrightarrow{\bigwedge^1\Phi}
\bigwedge^8F\otimes L^0_{1}G\xrightarrow{\Kos_\Phi} \bigwedge ^9F\otimes L^0_{2}G
\to 0 \notag\\
&\scriptscriptstyle \mathcal C_{\Phi}^{2,5}:\ 0\to\bigwedge^0F\otimes K^5_{1}(G^*)\xrightarrow{\eta_\Phi} \bigwedge^1F\otimes K^5_{0}(G^*)\xrightarrow{\bigwedge^5\Phi}\bigwedge^6F\xrightarrow{\bigwedge^1\Phi} \bigwedge^7F\otimes L^0_{1}G\xrightarrow{\Kos_\Phi} 
\bigwedge^8F\otimes L^0_{2}G\xrightarrow{\Kos_\Phi} \bigwedge ^9F\otimes L^0_{3}G
\to 0 \notag\\
&\scriptscriptstyle \mathcal C_{\Phi}^{3,5}:\ 0\to\bigwedge^0F\otimes K^5_{0}(G^*)\xrightarrow{\bigwedge^5\Phi} \bigwedge^5F\xrightarrow{\bigwedge^1\Phi}\bigwedge^6F\otimes L^0_{1}G\xrightarrow{\Kos_\Phi} \bigwedge^7F\otimes L^0_{2}G\xrightarrow{\Kos_\Phi}
\bigwedge^8F\otimes L^0_{3}G\xrightarrow{\Kos_\Phi} \bigwedge ^9F\otimes L^0_{4}G
\to 0 \notag\\
&\scriptscriptstyle \mathcal C_{\Phi}^{4,5}:\ 0\to\bigwedge^4F\xrightarrow{\bigwedge^1\Phi} \bigwedge^5F\otimes L^0_{1}G\xrightarrow{\Kos_\Phi}\bigwedge^6F\otimes L^0_{2}G\xrightarrow{\Kos_\Phi} \bigwedge^7F\otimes L^0_{3}G\xrightarrow{\Kos_\Phi}
\bigwedge^8F\otimes L^0_{4}G\xrightarrow{\Kos_\Phi} \bigwedge ^9F\otimes L^0_{5}G\to 0 \notag\end{align}\endgroup
\end{example}

\begin{example}\label{5.6}If $\goth f=\goth g+1$ and $i=0$, then $\mathcal C^{0,a}_\Phi\otimes \bigwedge^\goth f F^*$ is $$\textstyle 0\to \bigwedge^0F\otimes \bigwedge^\goth f F^*\otimes K_0^a(G^*)\xrightarrow{\bigwedge^a\Phi}\bigwedge^aF\otimes \bigwedge^\goth f F^* \xrightarrow{\bigwedge^{\goth g+1-a}\Phi}\bigwedge^\goth f F\otimes \bigwedge^\goth f F^*\otimes L_1^{\goth g-a}G\to 0,$$which is naturally isomorphic to
\begin{equation}\label{5.6ex}\textstyle 0\to  \bigwedge^\goth f F^*\otimes \bigwedge^a(G^*)\xrightarrow{d_2} \bigwedge^{\goth f-a} F^* \xrightarrow{d_1}\bigwedge^{\goth f-a}G\to 0,\end{equation}
with $d_2(\omega_{F^*}\otimes \gamma_a)=[(\bigwedge^a\Phi)(\gamma_a)](\omega_{F^*})$ and $d_1(\phi_{\goth f-a})=(\bigwedge^{\goth f-a}\Phi^*)(\phi_{\goth f-a})$, for $\omega_{F^*}\in \bigwedge^\goth f F^*$, $\gamma_a\in \bigwedge^a(G^*)$, and $\phi_{\goth f-a}\in \bigwedge^{\goth f-a}F$. In particular, if $\goth f=4$,  $\goth g=3$, $a=2$, and $\Phi=(\Phi_{i,j})$ is given by a $4\times 3$ matrix, then (\ref{5.6ex}) is
$$0\to R^3\xrightarrow{d_2}R^6\xrightarrow{d_1}R^3 \to 0$$ with
$$d_1=\bmatrix 
\Delta(1,2;1,2)&\Delta(1,3;1,2)&\Delta(1,4;1,2)&\Delta(3,4;1,2)&\Delta(2,4;1,2)&
\Delta(2,3;1,2)\\
\Delta(1,2;1,3)&\Delta(1,3;1,3)&\Delta(1,4;1,3)&\Delta(3,4;1,3)&\Delta(2,4;1,3)&
\Delta(2,3;1,3)\\
\Delta(1,2;2,3)&\Delta(1,3;2,3)&\Delta(1,4;2,3)&\Delta(3,4;2,3)&\Delta(2,4;2,3)&
\Delta(2,3;2,3)\endbmatrix$$
and $$d_2=\bmatrix 
\Delta(3,4;1,2)&\Delta(3,4;1,3)&\Delta(3,4;2,3)\\
-\Delta(2,4;1,2)&-\Delta(2,4;1,3)&-\Delta(2,4;2,3)\\
\Delta(2,3;1,2)&\Delta(2,3;1,3)&\Delta(2,3;2,3)\\
\Delta(1,2;1,2)&\Delta(1,2;1,3)&\Delta(1,2;2,3)\\
-\Delta(1,3;1,2)&-\Delta(1,3;1,3)&-\Delta(1,3;2,3)\\
\Delta(1,4;1,2)&\Delta(1,4;1,3)&\Delta(1,4;2,3)\endbmatrix,$$
where $$\Delta(i,j;k,\ell)=\det\bmatrix \Phi_{i,k}&\Phi_{i,\ell}\\\Phi_{j,k}&\Phi_{j,\ell}\endbmatrix.$$
\end{example}

\begin{observation} \label{new=old} Adopt Data~{\rm\ref{setup}}. Recall the complexes $\{\mathcal C^{i}_\Phi\}$ of Definition~{\rm\ref{FFR}} and the complexes 
$\{\mathcal C^{i,a}_\Phi\}$ of Definition~{\rm\ref{CiaPhi}} and Observation~{\rm\ref{complex}}. 
 Then, for each integer $i$, the complexes 
$$\mathcal C^{i}_\Phi, \quad  \mathcal C^{i,1}_\Phi, \quad\text{and}\quad \mathcal C^{i-1,\goth g}_\Phi\otimes \textstyle \bigwedge^\goth g G$$
are canonically isomorphic.\end{observation}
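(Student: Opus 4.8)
The plan is to unwind both descriptions of each complex, exhibit the claimed isomorphisms one module at a time, and then verify that the module isomorphisms intertwine the differentials. I would treat the two assertions $\mathcal C^i_\Phi\cong\mathcal C^{i,1}_\Phi$ and $\mathcal C^i_\Phi\cong\mathcal C^{i-1,\goth g}_\Phi\otimes\bigwedge^\goth g G$ separately; the arguments are parallel and hold uniformly in $i$. For the first, I would read the modules of $\mathcal C^{i,1}_\Phi$ off (\ref{spot-by-spot}) and compare with (\ref{Ci-spot}) using Observation~\ref{2.8}: in each position $j\le i$ the module $L^{\goth g-1}_{i+1-j}G$ is canonically $\bigwedge^\goth g G\otimes\Sym_{i-j}G$ by \ref{2.8}.\ref{2.8.b} (the proviso $\ell+\goth g\ne 1$ holds since $\ell=i+1-j\ge 1$); in position $j=i+1$ both modules equal $\bigwedge^{\goth f-\goth g-i}F$; and in each position $j\ge i+2$ the module $K^1_{j-i-2}(G^*)$ is canonically $D_{j-i-1}(G^*)$ by \ref{2.8}.\ref{2.8.a}. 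For the second I would argue the same way with $a=\goth g$: in position $j\le i-1$ the module $L^0_{i-j}G$ equals $\Sym_{i-j}G$ by \ref{2.8}.\ref{2.8.c}; in position $j=i$ the module is $\bigwedge^{\goth f-i}F$; and in position $j\ge i+1$ the module $K^\goth g_{j-i-1}(G^*)$ equals $\bigwedge^\goth g(G^*)\otimes D_{j-i-1}(G^*)$ by \ref{2.8}.\ref{2.8.d}. After tensoring with $\bigwedge^\goth g G$ and cancelling $\bigwedge^\goth g(G^*)\otimes\bigwedge^\goth g G\cong R$ by means of the canonical element $\omega_{G^*}\otimes\omega_G$ of \ref{omega}, each module becomes $(\mathcal C^i_\Phi)_j$.

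Next I would check that these identifications intertwine the differentials on the two ``Koszul-type'' halves. On the ``$\mathbb L$-half'' both complexes carry $\Kos_\Phi$: in the first case the identification $\bigwedge^\goth g G\otimes\Sym_\bullet G\cong L^{\goth g-1}_\bullet G$ is induced by the Koszul map $\kappa$, and its compatibility with $\Kos_\Phi$ is the commutativity of the second diagram in Remark~\ref{5.3}.\ref{5.3.c} taken with $p=\goth g$; in the second case $L^0_\bullet G=\Sym_\bullet G$ on the nose, so there is nothing to check. On the ``$\mathbb K$-half'' both complexes carry $\eta_\Phi$: in the first case the identification $D_\bullet(G^*)\cong K^1_\bullet(G^*)$ is induced by $\eta_{\id_{G^*}}$, with compatibility given by the first diagram in Remark~\ref{5.3}.\ref{5.3.c} taken with $p=1$; in the second case $K^\goth g_\bullet(G^*)=\bigwedge^\goth g(G^*)\otimes D_\bullet(G^*)$ on the nose, and $\eta_\Phi$ ignores the factors $\bigwedge^\goth g(G^*)$ and $\bigwedge^\goth g G$ that get cancelled.

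It then remains to match the two ``exceptional'' differentials in each case, by direct computation from Definition~\ref{CiaPhi}. With $a=1$: the map $d_{i+2}$ of $\mathcal C^{i,1}_\Phi$ is $f\otimes\gamma\mapsto f\wedge\Phi(\gamma)$, which is exactly the formula for $\eta_\Phi$ on $\bigwedge^\bullet F\otimes D_1(G^*)$ in Definition~\ref{8mathbb-E}.\ref{8mathbb-E.a}, and $d_{i+1}$ is $f\mapsto f\wedge(\bigwedge^\goth g\Phi)(\omega_{G^*})\otimes\kappa(\omega_G)$, which is the map $\bigwedge^\goth g\Phi$ of Definition~\ref{FFR} once $L^{\goth g-1}_1G$ is identified with $\bigwedge^\goth g G$ via $\kappa$. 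With $a=\goth g$: the maps $d_{i+1}$ and $d_i$ of $\mathcal C^{i-1,\goth g}_\Phi$ unwind to $f\otimes\gamma\mapsto f\wedge(\bigwedge^\goth g\Phi)(\gamma)$ (which becomes $\bigwedge^\goth g\Phi$ of $\mathcal C^i_\Phi$ after tensoring with $\bigwedge^\goth g G$ and cancelling) and to $\Kos_\Phi$ (tensored with $\bigwedge^\goth g G$), respectively.

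The hard part will be the bookkeeping at the three ``meeting'' positions $i$, $i+1$, $i+2$, where the interior maps give way to the exceptional maps $\bigwedge^a\Phi$ and $\bigwedge^{\goth g+1-a}\Phi$ and where the identifications of Observation~\ref{2.8} are the degenerate ones ($K^1_0(G^*)=G^*$, $L^{\goth g-1}_1G=\bigwedge^\goth g G$, $K^\goth g_0(G^*)=\bigwedge^\goth g(G^*)$, $L^0_0G=R$). One must check that the Koszul map $\kappa$ occurring inside $d_{i+1}$ in Definition~\ref{CiaPhi} degenerates as expected, that $\ev^*(1)\in\bigwedge^{\goth g+1-a}(G^*)\otimes\bigwedge^{\goth g+1-a}G$ specializes at $a=1$ to $\omega_{G^*}\otimes\omega_G$ and at $a=\goth g$ to $\sum_k e_k^*\otimes e_k$ as used above, and that the various signs---harmless throughout---line up. These verifications are routine, but they carry essentially all of the content of the statement.
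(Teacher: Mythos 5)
Your proposal is correct and follows the same approach as the paper: identify the modules of the three complexes position by position using (\ref{Ci-spot}), (\ref{spot-by-spot}), Observation~\ref{2.8}, and the canonical isomorphism $\bigwedge^\goth g G\otimes\bigwedge^\goth g(G^*)\cong R$, then observe that these canonical identifications assemble into isomorphisms of complexes. The paper dispatches the differential compatibility in a single sentence, whereas you flesh out exactly which diagrams of Remark~\ref{5.3}.\ref{5.3.c} and which degeneracies of $\kappa$, $\eta$, and $\ev^*$ are being invoked at the splice positions $i$, $i+1$, $i+2$; this is more work than the paper records but is the same argument.
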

\begin{proof}Use the formulas (\ref{Ci-spot}) and (\ref{spot-by-spot}), Observation~\ref{2.8}, and the fact that $\bigwedge^\goth gG\otimes \bigwedge^\goth gG^*$ is canonically isomorphic to $R$ to see that the modules
 $$(\mathcal C^{i}_\Phi)_j, \quad  (\mathcal C^{i,1}_\Phi)_j, \quad\text{and}\quad (\mathcal C^{i-1,\goth g}_\Phi)_j\otimes \textstyle \bigwedge^\goth g G$$
are canonically isomorphic for all $j$. These canonical isomorphisms induce the required canonical isomorphisms of complexes. 
\end{proof}

\begin{observation}\label{duality} Adopt Data~{\rm\ref{setup}}. Let and $i$ and $a$ be integers with $1\le a\le \goth g$. Then the complexes 
$$\mathcal C_\Phi^{i,a}\otimes \textstyle \bigwedge^{\goth f}(F^*) \quad\text{and}\quad \mathcal (C_\Phi^{\goth f-\goth g-i-1,\goth g+1-a})^*[-(\goth f-\goth g+1)]$$ of Definition~{\rm\ref{CiaPhi}} and Observation~{\rm\ref{complex}} are canonically isomorphic.
\end{observation}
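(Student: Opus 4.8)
The plan is to check the claimed isomorphism degree-by-degree and then verify compatibility with the differentials, exploiting the fact that $\mathcal C_\Phi^{i,a}$ is assembled from three standard pieces $\mathbb K\to\bigwedge\to\mathbb L$, each of which is individually self-dual in the appropriate sense. First I would spell out the module in homological degree $j$ on each side. On the left, $[\mathcal C_\Phi^{i,a}\otimes\bigwedge^{\goth f}(F^*)]_j$ is obtained from (\ref{spot-by-spot}) by tensoring with $\bigwedge^{\goth f}(F^*)$, which converts $\bigwedge^{\goth f-j}F$ into $\bigwedge^{j}(F^*)$ (canonically, via the perfect pairing $\bigwedge^{\goth f-j}F\otimes\bigwedge^{\goth f}(F^*)\cong\bigwedge^j(F^*)$ from \ref{87Not1}). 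On the right, $[(\mathcal C_\Phi^{\goth f-\goth g-i-1,\goth g+1-a})^*[-(\goth f-\goth g+1)]]_j=[(\mathcal C_\Phi^{\goth f-\goth g-i-1,\goth g+1-a})_{\goth f-\goth g+1-j}]^*$; applying (\ref{spot-by-spot}) with $(i,a)$ replaced by $(\goth f-\goth g-i-1,\goth g+1-a)$ and the position index replaced by $\goth f-\goth g+1-j$, then dualizing, I would match the three cases. The middle term ($j=i+1$) matches because $(\bigwedge^{\goth f-\goth g+a-i-1}F)^*\cong\bigwedge^{\goth f-\goth g+a-i-1}(F^*)$; the $\mathbb L$-strand of the left complex ($j\le i$) matches the $\mathbb K$-strand of the right complex's dual via the perfect pairing $L^{\goth g-a}_{i+1-j}G\otimes K^{\goth g-a+1}_{i-j}(G^*)\to R$ of (\ref{p207}); and symmetrically the $\mathbb K$-strand ($i+2\le j$) matches the dual of the $\mathbb L$-strand via the same pairing with the roles of $G$ and $G^*$ interchanged.

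The second step is to check that these pointwise isomorphisms commute with the differentials, equivalently that each of the three maps $\eta_\Phi$, the two "exceptional" maps $\bigwedge^a\Phi$ and $\bigwedge^{\goth g+1-a}\Phi$, and $\Kos_\Phi$ goes over to the corresponding map in the dual complex. For the $\eta_\Phi$ and $\Kos_\Phi$ maps this is the classical statement that the Weyl-module complex $\mathbb K^{N,p}_\Phi$ and the Schur-module complex $\mathbb L^{N,p}_\Phi$ are, up to tensoring with a rank-one module and a homological shift, $R$-duals of one another; this follows from Definition~\ref{8mathbb-E}.\ref{8mathbb-E.a} by transposing the compositions defining $\eta_\Phi$ and $\Kos_\Phi$ and using that $(\ev^*(1))$ is independent of coordinates (\ref{ev-star}), together with the defining commutative diagrams in Remark~\ref{5.3}.\ref{5.3.c}. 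The one genuinely new point is the middle: I must check that the dual of $[\mathcal C_\Phi^{i,a}]_{i+2}\xrightarrow{\mathrm{mult}\circ(1\otimes\bigwedge^a\Phi)}[\mathcal C_\Phi^{i,a}]_{i+1}$ is $[\mathcal C_{\Phi'}^{\cdots}]_{?}\xrightarrow{\bigwedge^{\goth g+1-(\goth g+1-a)}\Phi'}\cdots$ for the dual complex with parameter $a'=\goth g+1-a$, i.e. that the composite $\wedge\circ(1\otimes\bigwedge^a\Phi)$ dualizes to $\wedge\circ(1\otimes\ev^*(1))\circ\cdots$ built from $\bigwedge^a\Phi$ again (the exceptional powers $a$ and $\goth g+1-a$ swap roles). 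This is where Proposition~\ref{A3}, especially parts (\ref{A3.c}) and (\ref{A3.d}), does the work: transposing "wedge with $(\bigwedge^a\Phi)(\gamma)$" against "apply $\bigwedge^{\goth f-\bullet}F^*$" is governed by \ref{A3}.\ref{A3.c}, and moving $\Phi$ across the wedge/contraction is \ref{A3}.\ref{A3.d}; keeping track of the signs $(-1)^{(\goth f-r)(\goth f-p)}$ from \ref{A3}.\ref{A3.b} is what pins down the shift $[-(\goth f-\goth g+1)]$.

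I expect the pointwise matching to be routine bookkeeping with the rank formulas of \ref{4.6} and the pairings (\ref{p207}); the main obstacle is the sign and shift calculation at the two exceptional positions — verifying that dualizing $\mathcal C_\Phi^{i,a}$ reverses homological direction, shifts by exactly $\goth f-\goth g+1$, sends $i\mapsto\goth f-\goth g-i-1$ and $a\mapsto\goth g+1-a$, and introduces no stray sign that would obstruct an honest isomorphism of complexes (as opposed to a mere isomorphism up to sign). I would handle this by choosing the canonical generator $\omega_{F^*}\otimes\omega_F\in\bigwedge^{\goth f}(F^*)\otimes\bigwedge^{\goth f}F$ of \ref{omega} once and for all, writing every map through $\ev^*(1)$, and invoking Proposition~\ref{A3} to normalize; the fact that $\mathcal C^{i,a}_\Phi$ is built by concatenation means it is enough to check duality separately on the $\mathbb K$-part, on $\bigwedge$, and on the $\mathbb L$-part, and that the two concatenation maps dualize correctly, which localizes the sign issue to a short finite computation rather than a global one.
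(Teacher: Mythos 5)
Your approach matches the paper's: match modules degree-by-degree via (\ref{spot-by-spot}), (\ref{p207}), Observation~\ref{2.8}, and the canonical isomorphism $\bigwedge^\ell F^*\cong\bigwedge^{\goth f-\ell}F\otimes\bigwedge^{\goth f}(F^*)$, invoke the standard self-duality of the $\mathbb K$- and $\mathbb L$-strands, and then check the patch diagram at positions $i,\ i+1,\ i+2$ explicitly. The paper is actually less careful about signs than you intend to be --- it states only that the patch diagram commutes ``up to sign,'' which suffices after adjusting the vertical isomorphisms --- so your worry about stray signs is a reasonable precaution rather than evidence of a gap in either argument.
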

\begin{Remark} The symbol ``$[-(\goth f-\goth g+1)]$'' refers to a shift in homological degree, see \ref{shift}. In particular, for each integer $j$,
$$((C_\Phi^{\goth f-\goth g-i-1,\goth g+1-a})^*[-(\goth f-\goth g+1)])_j=
 ((C_\Phi^{\goth f-\goth g-i-1,\goth g+1-a})_{\goth f-\goth g+1-j})^*.$$
\end{Remark}
\begin{proof}
Use the formula  (\ref{spot-by-spot}), Observation~\ref{2.8}, and the fact that $\bigwedge^\ell F^*$ is canonically isomorphic to $\bigwedge^{\goth f-\ell}F\otimes \bigwedge^\goth f(F^*)$, for all integers $\ell$ to see that the modules
 $$(\mathcal C^{i,a}_\Phi)_j \otimes \textstyle \bigwedge^\goth f(F^*)\quad\text{and}\quad 
((C_\Phi^{\goth f-\goth g-i-1,\goth g+1-a})_{\goth f-\goth g+1-j})^*
$$
are canonically isomorphic for all $j$.
Indeed, \begingroup\allowdisplaybreaks
\begin{align*}&((C_\Phi^{\goth f-\goth g-i-1,\goth g+1-a})_{\goth f-\goth g+1-j})^*\\
=&\begin{cases}
(\bigwedge^{\goth g-1+j}F\otimes L_{j-i-1}^{a-1}G)^*\,&\text{if $i+2\le j$,}\\
(\bigwedge^{\goth g+i+1-a}F)^*,&\text{if $i+1=j$, and}\\
(\bigwedge^{j}F\otimes K_{i-j}^{\goth g+1-a}(G^*))^*,&\text{if $j\le i$}\end{cases}
\\
\cong&\begin{cases}
\bigwedge^{\goth g-1+j}(F^*)\otimes K_{j-i-2}^{a}(G^*),&\text{if $i+2\le j$,}\\
\bigwedge^{\goth g+i+1-a}(F^*),&\text{if $i+1=j$, and}\\
\bigwedge^{j}(F^*)\otimes L_{i-j+1}^{\goth g-a}G,&\text{if $j\le i$}\end{cases}&&\text{by (\ref{p207})}\\
\cong&\begin{cases}
\bigwedge^{\goth f-\goth g+1-j}F\otimes K_{j-i-2}^{a}(G^*)\otimes \textstyle \bigwedge^\goth f(F^*),&\text{if $i+2\le j$,}\\
\bigwedge^{\goth f-\goth g-i-1+a}F\otimes \textstyle \bigwedge^\goth f(F^*),&\text{if $i+1=j$, and}\\
\bigwedge^{\goth f-j}F\otimes L_{i-j+1}^{\goth g-a}G\otimes \textstyle \bigwedge^\goth f(F^*),&\text{if $j\le i$}\end{cases}\\
=&(\mathcal C^{i,a}_\Phi)_j\otimes \textstyle\bigwedge^{\goth f}(F^*) .
\end{align*}\endgroup
The complex $\mathcal C_\Phi^{i,a}$ is obtained by patching together two well-known complexes. The duality among the pieces is well understood. We focus on the duality at the patch:
\small{$$\xymatrix {(\mathcal C_\Phi^{i,a})_{i+2}\otimes \bigwedge^\goth f(F^*)\ar[r]^{d}\ar[d]&(\mathcal C_\Phi^{i,a})_{i+1}\otimes \bigwedge^\goth f(F^*)\ar[r]^{d}\ar[d]&
(\mathcal C_\Phi^{i,a})_{i}\otimes \bigwedge^\goth f(F^*)\ar[d]\\
((\mathcal C_\Phi^{\goth f-\goth g-i-1,\goth g+1-a})_{\goth f-\goth g-1-i})^*\ar[r]^{d^*}&((\mathcal C_\Phi^{\goth f-\goth g-i-1,\goth g+1-a})_{\goth f-\goth g-i})^*\ar[r]^{d^*}&
((\mathcal C_\Phi^{\goth f-\goth g-i-1,\goth g+1-a})_{\goth f-\goth g+1-i})^*,}$$}
which is the same as $$\scriptstyle\xymatrix {\scriptstyle\bigwedge^{\goth f-\goth g-i-1}F\otimes K^a_0(G^*)\otimes \bigwedge^\goth f(F^*)\ar[r]^(.55){d}\ar[d]&\scriptstyle\bigwedge^{\goth f-\goth g+a-i-1}F\otimes \bigwedge^\goth f(F^*)\ar[r]^{d}\ar[d]&\scriptstyle
\bigwedge^{\goth f-i}F\otimes L_1^{\goth g-a}G\otimes \bigwedge^\goth f(F^*)\ar[d]\\\scriptstyle
(\bigwedge^{\goth g+1+i}F\otimes L_1^{a-1}G)^*\ar[r]^(.55){d^*}&\scriptstyle(\bigwedge^{\goth g+i+1-a}F)^*
\ar[r]^{d^*}&\scriptstyle
(\bigwedge^iF\otimes K_0^{\goth g+1-a}(G^*))^*.}$$ The vertical maps are comprised of the canonical isomorphisms $\bigwedge^\ell F\otimes \bigwedge^\goth fF^*\cong \bigwedge ^{\goth f-\ell} F^*$ and the isomorphisms induced by the perfect pairing of \ref{p207}. 
There is no difficulty in checking that the diagram commutes up to sign.
\end{proof}

Our conventions concerning the length of a complex are given in \ref{length}.

\begin{observation}\label{3.7}Adopt Data~{\rm\ref{setup}}. Let and $i$ and $a$ be integers with $1\le a\le \goth g$. Recall the complexes $\{\mathcal C_\Phi^{i,a}\}$ of Definition~{\rm\ref{CiaPhi}} and Observation~{\rm\ref{complex}}. 
\begin{enumerate}[\rm(a)]
\item\label{3.7.a} If $-1\le i$, then $(\mathcal C_\Phi^{i,a})_j=0$ for $j\le -1$ and  $\length(\mathcal C^{i,a}_\Phi)\le \goth f$.
\item If $i\le \goth f-\goth g$ and $\goth f-\goth g+2\le j$, then $(\mathcal C_\Phi^{i,a})_j=0$.
\item\label{3.7.c} Assume
$$-1\le i\le \goth f-\goth g,\text{ or }(i,a)=(\goth f-\goth g+1,1),\text{ or }(i,a)=(-2,\goth g).$$
Then $(\mathcal C_\Phi^{i,a})_j=0$ for $j\le -1$ and for $\goth f-\goth g+2\le j$; in particular, $\mathcal C_\Phi^{i,a}$ 
 is a complex of length at most $\goth f-\goth g+1$.\end{enumerate}
\end{observation}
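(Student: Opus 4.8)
The plan is to prove all three parts of Observation~\ref{3.7} by direct inspection of the module description (\ref{spot-by-spot}) of $\mathcal C_\Phi^{i,a}$, using only the elementary vanishing $\bigwedge^m F=0$ for $m<0$ and for $m>\goth f$. First I would record that, by (\ref{spot-by-spot}) together with the identity $\goth f-\goth g+a-i-1=\goth f-(i+1)-(\goth g-a)$, every module $(\mathcal C_\Phi^{i,a})_j$ is a tensor product whose exterior-power-of-$F$ factor is $\bigwedge^{c_j}F$, where $c_j=\goth f-j$ if $j\le i$, $c_j=\goth f-(i+1)-(\goth g-a)$ if $j=i+1$, and $c_j=\goth f-\goth g+1-j$ if $i+2\le j$. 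Since $1\le a\le\goth g$ and $\goth g\ge1$, one always has $c_j\le\goth f-j$; consequently $(\mathcal C_\Phi^{i,a})_j=0$ whenever $c_j\notin[0,\goth f]$, and this single observation drives everything.

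For part~(a): if $-1\le i$ and $j\le-1$, then $j\le i$, so $c_j=\goth f-j\ge\goth f+1>\goth f$ and the module vanishes; if instead $j\ge\goth f+1$, then $c_j\le\goth f-j<0$ in all three cases, so again the module vanishes, whence $\mathcal C_\Phi^{i,a}$ is concentrated in homological degrees $0,\dots,\goth f$ and $\length(\mathcal C_\Phi^{i,a})\le\goth f$. For part~(b): when $i\le\goth f-\goth g$ and $j\ge\goth f-\goth g+2$ we automatically have $j\ge i+2$, so the third formula applies and $c_j=\goth f-\goth g+1-j\le-1<0$, forcing vanishing. Part~(c) is then assembled from these: for $-1\le i\le\goth f-\goth g$ it is just (a) at the bottom and (b) at the top; for $(i,a)=(\goth f-\goth g+1,1)$ I would note $i\ge-1$, so (a) handles $j\le-1$, and check by the same arithmetic that $c_{i+1}=-1$ and $c_j\le-2$ for $j\ge i+2$; for $(i,a)=(-2,\goth g)$ I would recheck the bottom by hand, using $c_j=\goth f-j\ge\goth f+2$ for $j\le-2$ and $c_{-1}=\goth f+1>\goth f$, and observe that for $j\ge\goth f-\goth g+2$ one again lands in the third formula with $c_j\le-1$. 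In every case $\mathcal C_\Phi^{i,a}$ lives in homological degrees $0,\dots,\goth f-\goth g+1$, so has length at most $\goth f-\goth g+1$.

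I do not expect any real obstacle here; the statement is essentially bookkeeping around the piecewise formula (\ref{spot-by-spot}). The only points that need care are keeping the three cases of (\ref{spot-by-spot}) straight at their boundaries and, in part~(c), not tacitly appealing to part~(a) for the pair $(i,a)=(-2,\goth g)$, where $i<-1$ falls outside the hypothesis of (a) so the bottom vanishing must be verified directly. As an alternative route for the two exceptional pairs in (c) — and as a consistency check — Observation~\ref{new=old} identifies $\mathcal C_\Phi^{\goth f-\goth g+1,1}$ with $\mathcal C_\Phi^{\goth f-\goth g+1}$ and $\mathcal C_\Phi^{-2,\goth g}\otimes\bigwedge^{\goth g}G$ with $\mathcal C_\Phi^{-1}$, and both $\mathcal C_\Phi^{\goth f-\goth g+1}$ and $\mathcal C_\Phi^{-1}$ occur in Example~\ref{2.1} as complexes of length exactly $\goth f-\goth g+1$. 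Note finally that the Schur and Weyl factors of the modules of $\mathcal C_\Phi^{i,a}$ never enter: inside the surviving homological range only an upper bound on length is wanted, and outside it the $\bigwedge^{c_j}F$ factor already kills the module.
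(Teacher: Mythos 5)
Your proof is correct and takes essentially the same route as the paper's: read off the exterior-power-of-$F$ factor from the piecewise formula (\ref{spot-by-spot}) and observe that it vanishes whenever its degree lies outside $[0,\goth f]$. The paper's own proof states the two representative calculations and says "the other assertions are checked in a similar manner"; you have simply written out those remaining cases, including the correct observation that $(i,a)=(-2,\goth g)$ falls outside the hypothesis of part (a) and so the bottom vanishing must be verified directly.
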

\begin{proof} Use (\ref{spot-by-spot}). If $j\le -1\le i$, then the  the $\bigwedge^{\bullet} F$ contribution to $(\mathcal C_\Phi^{i,a})_j$ is 
$\bigwedge^{\goth f-j}F=0$. If $i\le \goth f-\goth g$ and $\goth f-\goth g+2\le j$, then the $\bigwedge^{\bullet} F$ contribution to $(\mathcal C_\Phi^{i,a})_j$ is 
$\bigwedge^{\goth f-\goth g+1-j}F=0$. The other assertions are checked in a similar manner.
\end{proof}

\begin{observation}\label{H0}Adopt Data~{\rm\ref{setup}}. Let and $i$ and $a$ be integers with $1\le a\le \goth g$. Recall the complexes $\{\mathcal C_\Phi^{i,a}\}$ of Definition~{\rm\ref{CiaPhi}} and Observation~{\rm\ref{complex}}. Then
$$\HH_0(\mathcal C^{i,a}_\Phi)=\begin{cases}
\frac{\bigwedge^{\goth f-\goth g+a}F}{\im (\bigwedge^a\Phi)\wedge \bigwedge^{\goth f-\goth g}F},&\text{if $i=-1$,}\vspace{5pt}\\
\coker (\bigwedge^{\goth g-a+1}\Phi^*),&\text{if $i=0$, and}\vspace{5pt}\\
\frac{L_{i+1}^{\goth g-a}G}{\Phi^*(F)\cdot L_{i}^{\goth g-a}G},&\text{if $1\le i$.}\end{cases}$$\end{observation}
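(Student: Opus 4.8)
The plan is to read $\HH_0(\mathcal C^{i,a}_\Phi)$ off the presentation given by the last two differentials, using the description of $\mathcal C^{i,a}_\Phi$ in Remark~\ref{SecondRecipe}, equation~(\ref{all-at-once}). By definition $\HH_0(\mathcal C^{i,a}_\Phi)=\coker\big((\mathcal C^{i,a}_\Phi)_1\xrightarrow{d_1}(\mathcal C^{i,a}_\Phi)_0\big)$, so everything hinges on identifying $(\mathcal C^{i,a}_\Phi)_0$, $(\mathcal C^{i,a}_\Phi)_1$, and the map $d_1$ in each of the three cases $i=-1$, $i=0$, $1\le i$, which are genuinely different because the ``exceptional patch'' of the complex sits in homological positions $i+1$ and $i+2$, hence slides past position $0$ as $i$ grows.

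First I would handle $1\le i$. Here positions $0$ and $1$ both lie strictly below the patch (since $i+1\ge 2$), so by (\ref{spot-by-spot}) we have $(\mathcal C^{i,a}_\Phi)_0=\bigwedge^{\goth f}F\otimes L^{\goth g-a}_{i+1}G$ and $(\mathcal C^{i,a}_\Phi)_1=\bigwedge^{\goth f-1}F\otimes L^{\goth g-a}_{i}G$, and $d_1=\Kos_\Phi$ in the sense of the complex $\mathbb L^{\goth f-i-1,\goth g-a}_\Phi$. Tensoring with $\bigwedge^{\goth f}F^*$ (which is harmless, being an invertible twist) identifies the cokernel of this last $\Kos_\Phi$ with $L^{\goth g-a}_{i+1}G$ modulo the image of the map $L^{\goth g-a}_iG\to L^{\goth g-a}_{i+1}G$ coming from multiplication by $\Phi^*(F)$; that is precisely $L^{\goth g-a}_{i+1}G/\big(\Phi^*(F)\cdot L^{\goth g-a}_iG\big)$. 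One should spell out that the relevant piece of $\Kos_\Phi$, after the twist by $\bigwedge^{\goth f}F^*\cong\bigwedge^{\goth g-1}F\otimes(\text{stuff})$, is the action of $G^*\xrightarrow{\Phi^*}F^*$ paired against $\bigwedge^{\goth f-1}F$, i.e. the composite $\bigwedge^{\goth f-1}F\otimes L^{\goth g-a}_iG\to\bigwedge^{\goth f}F\otimes L^{\goth g-a}_{i+1}G$ is $x\otimes y\mapsto\sum x\wedge\Phi(n^*_\ell)\otimes n_\ell\cdot y$; dualizing $\bigwedge^{\goth f-1}F\cong\bigwedge^1 F^*\otimes\bigwedge^{\goth f}F$ turns this into the $\Phi^*(F)$-multiplication on $L^{\goth g-a}_\bullet G$.

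Next, $i=0$: now position $1$ is the exceptional patch position $i+1$, so $(\mathcal C^{0,a}_\Phi)_1=\bigwedge^{\goth f-\goth g+a-1}F$ and $(\mathcal C^{0,a}_\Phi)_0=\bigwedge^{\goth f}F\otimes L^{\goth g-a}_1G$, and $d_1$ is the map called $\bigwedge^{\goth g+1-a}\Phi$ composed with $\kappa$ in Definition~\ref{CiaPhi}. Using Observation~\ref{2.8}.\ref{2.8.g} to write $L^{\goth g-a}_1G\cong\bigwedge^{\goth g-a+1}G$ and twisting by $\bigwedge^{\goth f}F^*$, I would identify $d_1$ with $\bigwedge^{\goth g-a+1}\Phi^*\colon\bigwedge^{\goth f-\goth g+a-1}(F^*)\to\bigwedge^{\goth g-a+1}G$ up to sign — this is exactly the computation already carried out in Example~\ref{5.6} in the special case $\goth f=\goth g+1$, and the general case is the same coordinate-free manipulation using Proposition~\ref{A3}.\ref{A3.d}. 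Hence $\HH_0=\coker\big(\bigwedge^{\goth g-a+1}\Phi^*\big)$, as claimed. Finally, $i=-1$: here positions $0$ and $1$ are $i+1$ and $i+2$, so $(\mathcal C^{-1,a}_\Phi)_0=\bigwedge^{\goth f-\goth g+a}F$ and $(\mathcal C^{-1,a}_\Phi)_1=\bigwedge^{\goth f-\goth g-1}F\otimes K^a_1(G^*)$, with $d_1$ the composite appearing in Definition~\ref{CiaPhi}: send $f\otimes\gamma$ (using $K^a_0(G^*)=\bigwedge^a(G^*)$ for the analogous degree-up map, and here $K^a_1$) through $1\otimes\bigwedge^a\Phi$ and multiply into $\bigwedge^\bullet F$. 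I would observe that the image of $d_1$ is spanned by elements $f'\wedge(\bigwedge^a\Phi)(\gamma)$ with $f'\in\bigwedge^{\goth f-\goth g}F$ and $\gamma\in\bigwedge^a(G^*)$ — since $K^a_1(G^*)$ surjects appropriately onto the relevant span after composing with the embedding $K^a_1(G^*)\hookrightarrow\bigwedge^a(G^*)\otimes D_1(G^*)$ and the module action — giving $\HH_0=\bigwedge^{\goth f-\goth g+a}F/\big(\im(\bigwedge^a\Phi)\wedge\bigwedge^{\goth f-\goth g}F\big)$.

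The main obstacle I anticipate is the bookkeeping in the $i=-1$ case: one must verify that the image of $\bigwedge^{\goth f-\goth g-1}F\otimes K^a_1(G^*)$ under $d_1$ really equals $\im(\bigwedge^a\Phi)\wedge\bigwedge^{\goth f-\goth g}F$ and not something smaller, i.e. that replacing $K^a_1(G^*)$ by all of $\bigwedge^a(G^*)\otimes D_1(G^*)$ (or just $\bigwedge^a(G^*)$ after evaluation) does not enlarge the image. This requires unwinding the definition of $\eta_\Phi$ together with the embedding of the Weyl module $K^a_1(G^*)=\ker\eta^a_1$, and checking that the $D_1(G^*)$-factor is consumed by the $\ev^*(1)$-then-$\Phi$ step so that only $\bigwedge^a\Phi$ survives; the key point is that elements $v^*\otimes w^*\in G^*\otimes\bigwedge^a(G^*)$ of ``Koszul type'' map to $0$ under $\bigwedge^{a+1}\Phi$ applied after multiplication, which is the defining relation of $K^a_1(G^*)$. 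Once that verification is in place the other two cases are routine twists of already-established facts.
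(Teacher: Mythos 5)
Your overall approach — read $\HH_0(\mathcal C^{i,a}_\Phi)$ from the presentation $(\mathcal C^{i,a}_\Phi)_1\xrightarrow{d_1}(\mathcal C^{i,a}_\Phi)_0$ using (\ref{spot-by-spot}), split into the three cases according to where the patch position $i+1$ sits, and twist by $\bigwedge^{\goth f}F^*$ — is exactly what the paper does (compare (\ref{pict1}) and (\ref{pict2})), and your treatments of $i=0$ and $1\le i$ are in substance correct.

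There is, however, a genuine off-by-one error in the case $i=-1$. For $i=-1$ the patch position $i+1$ is $0$ and position $1$ is $i+2$, so by (\ref{spot-by-spot}) one has $(\mathcal C^{-1,a}_\Phi)_1 = \bigwedge^{\goth f-\goth g}F\otimes K^a_0(G^*) = \bigwedge^{\goth f-\goth g}F\otimes\bigwedge^a(G^*)$, not $\bigwedge^{\goth f-\goth g-1}F\otimes K^a_1(G^*)$ as you wrote (that is $(\mathcal C^{-1,a}_\Phi)_2$). With the correct module in place, $d_1$ is precisely the patch map $d_{i+2}$ of Definition~\ref{CiaPhi}: $f\otimes\gamma\mapsto f\wedge(\bigwedge^a\Phi)(\gamma)$, whose image is manifestly $\im(\bigwedge^a\Phi)\wedge\bigwedge^{\goth f-\goth g}F$. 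The "main obstacle" you anticipate — verifying that the image coming from $K^a_1(G^*)$ is not smaller, and arguing about the $D_1(G^*)$ factor being consumed — is entirely an artifact of having started one position too far to the left; once corrected, there is no such issue to resolve.

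A minor slip in the $i=0$ case: after twisting $(\mathcal C^{0,a}_\Phi)_1 = \bigwedge^{\goth f-\goth g+a-1}F$ by $\bigwedge^{\goth f}F^*$, the domain of the resulting map is $\bigwedge^{\goth g+1-a}F^*$, not $\bigwedge^{\goth f-\goth g+a-1}(F^*)$, so that the map is genuinely $\bigwedge^{\goth g+1-a}\Phi^*:\bigwedge^{\goth g+1-a}F^*\to\bigwedge^{\goth g+1-a}G$ as in (\ref{pict1}). The conclusion you state is right; only the intermediate labeling is off.
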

\begin{proof} Fix a basis element $\omega_{F^*}$ of $\bigwedge^\goth fF^*$ and, for each $j$, let 
$\sigma:\bigwedge^{j}F\to \bigwedge^{\goth f-j}F^*$ be the non-canonical isomorphism which sends $f_{j}$ to $f_{j}(\omega_{F^*})$.

For $i=0$, consider the commutative square
\begin{equation}\label{pict1}\xymatrix{\mathcal (C_\Phi^{0,a})_1=\bigwedge^{\goth f-\goth g+a-1}F\ar[r]^{d_1}\ar[d]_{\cong}^{\sigma}&
 \mathcal (C_\Phi^{0,a})_0=\bigwedge^{\goth f}F\otimes L_{1}^{\goth g-a} G\ar[d]_{\cong}^{\sigma\otimes \text{(\ref{2.8}.\ref{2.8.g})}}\ar[r]&\HH_0(C_\Phi^{0,a})\ar[r]&0\\
\bigwedge^{\goth g+1-a}F^* \ar[r]^{\bigwedge^{\goth g+1-a}\Phi^*}&
  \bigwedge^{\goth g-a+1} G.}\end{equation}

For $1\le i$,  consider the commutative square
\begin{equation}\label{pict2}\xymatrix{\mathcal (C_\Phi^{i,a})_1=\bigwedge^{\goth f-1}F\otimes L_i^{\goth g-a} G\ar[r]^{d_1}\ar[d]_{\cong}^{\sigma_1\otimes 1}&
 \mathcal (C_\Phi^{i,a})_0=\bigwedge^{\goth f}F\otimes L_{i+1}^{\goth g-a} G\ar[d]_{\cong}^{\sigma_0\otimes 1}\ar[r]&\HH_0(C_\Phi^{i,a})\ar[r]&0\\
F^*\otimes L_i^{\goth g-a} G\ar[r]^{\delta}&
  L_{i+1}^{\goth g-a} G,}\end{equation}
where 
$\delta(\phi\otimes \sum_\ell A_\ell\otimes B_\ell)=\sum_\ell A_\ell \otimes \Phi^*(\phi)\cdot B_\ell$ for $\phi\in F^*$, $A_\ell\in \bigwedge^{\goth g-a}G$, $B_{\ell}\in \Sym_{i}G$  and 
$$\textstyle \sum_\ell A_\ell \otimes B_\ell\in L_i^{\goth g-a}G\subseteq \bigwedge^{\goth g-a}G\otimes \Sym_{i}G.$$
In both diagrams, (\ref{pict1}) and (\ref{pict2}), the top line is exact by \ref{3.7}.\ref{3.7.a}. For $i=-1$, 
the sequence
$$\textstyle\mathcal (C_\Phi^{-1,a})_1=\bigwedge^{\goth f-\goth g}F\otimes 
\bigwedge^aG^*\xrightarrow{d_1}
\mathcal (C_\Phi^{-1,a})_0=\bigwedge^{\goth f-\goth g+a}F\to\HH_0(C_\Phi^{-1,a})\to 0$$ is exact and $d_1$ sends $f_{\goth f-\goth g}\otimes \gamma_a$ to  $f_{\goth f-\goth g}\wedge (\bigwedge^a\Phi) (\gamma_a)$. 
\end{proof}

\begin{observation}\label{ann(H0)}Adopt Data~{\rm\ref{setup}}.  Let and $i$ and $a$ be integers with $1\le a\le \goth g$. Recall the complexes $\{\mathcal C_\Phi^{i,a}\}$ of Definition~{\rm\ref{CiaPhi}} and Observation~{\rm\ref{complex}}. Then $I_\goth g(\Phi)$ annihilates 
$\HH_0(\mathcal C^{i,a}_\Phi)$ for all $i$ with $-1\le i$.\end{observation}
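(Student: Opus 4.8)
The plan is to reduce the question to the three explicit descriptions of $\HH_0(\mathcal C^{i,a}_\Phi)$ furnished by Observation~\ref{H0}, and to handle the resulting three cases with the multilinear-algebra tools already developed. Since $I_\goth g(\Phi)=I_1(\bigwedge^\goth g\Phi)$ is generated by the images $[(\bigwedge^\goth g\Phi)(\gamma_\goth g)](\omega_{F^*})$ as $\gamma_\goth g$ ranges over $\bigwedge^\goth g(G^*)$, it suffices to show that each such generator kills the displayed cokernel/quotient. First I would dispose of the case $i=0$. Here $\HH_0=\coker(\bigwedge^{\goth g-a+1}\Phi^*)$, so I must show $I_\goth g(\Phi)\cdot \bigwedge^{\goth g-a+1}G\subseteq \im(\bigwedge^{\goth g-a+1}\Phi^*)$. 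This follows from Proposition~\ref{A3}.\ref{A3.d}: writing an element of $\bigwedge^{\goth g-a+1}G$ in the form $(\bigwedge^{\goth g-a+1}\mathrm{id})$ applied to itself is not quite it, but the point is that for $\delta\in\bigwedge^\goth g(G^*)$ and $c\in\bigwedge^{\goth g-a+1}G$, the element $c(\delta)\in\bigwedge^{a-1}(G^*)$, and multiplying the generator of $I_\goth g(\Phi)$ by $c$ and chasing through \ref{A3}.\ref{A3.d} with $\Psi=\Phi^*$ expresses the product as $(\bigwedge^{\goth g-a+1}\Phi^*)$ of something — this is exactly the mechanism by which determinantal entries factor through the transpose map.

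Next I would handle the case $i=-1$, where $\HH_0=\bigwedge^{\goth f-\goth g+a}F\big/\big(\im(\bigwedge^a\Phi)\wedge\bigwedge^{\goth f-\goth g}F\big)$. I must show that for every $\gamma_\goth g\in\bigwedge^\goth g(G^*)$ and every $f\in\bigwedge^{\goth f-\goth g+a}F$, the product $[(\bigwedge^\goth g\Phi)(\gamma_\goth g)](\omega_{F^*})\cdot f$ lies in $(\bigwedge^a\Phi)(\bigwedge^a(G^*))\wedge\bigwedge^{\goth f-\goth g}F$. This is precisely the situation governed by Proposition~\ref{use-in-5.10} with $V=F$, $d=\goth f$: one writes the scalar $[(\bigwedge^\goth g\Phi)(\gamma_\goth g)](\omega_{F^*})$ as a contraction $b_\goth g(\alpha_\goth f)$ with $b_\goth g=(\bigwedge^\goth g\Phi)(\gamma_\goth g)\in\bigwedge^\goth g F$ and $\alpha_\goth f=\omega_{F^*}\in\bigwedge^\goth f(F^*)$, so the product is $f\wedge\big(\alpha_\goth f(\cdots)\big)$-type expression; Proposition~\ref{use-in-5.10} then rewrites $b_\goth g\wedge(\alpha_q(c_p))$ as a sum of $b'_{r'}\wedge(\alpha'_{q'}(c_p))$ with the degree of the exterior-power-of-$F^*$ contribution bounded so that the $\bigwedge^\bullet F$ piece $b'_{r'}$ has degree at least $a$; since every such $b'_{r'}$ of degree $\ge a$ coming from iterated contractions of $(\bigwedge^\goth g\Phi)(\gamma_\goth g)$ is itself (wedged into) $\im(\bigwedge^a\Phi)$, the product lands in the required submodule. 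The bookkeeping here — confirming that the surviving exterior factors are genuinely in the image of $\bigwedge^a\Phi$ and not merely in $\bigwedge^{\ge a}F$ — is the one point requiring care.

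Finally, for $1\le i$, where $\HH_0=L_{i+1}^{\goth g-a}G\big/\Phi^*(F)\cdot L_i^{\goth g-a}G$, I would argue by the same factorization principle: a generator of $I_\goth g(\Phi)$ acting on $L_{i+1}^{\goth g-a}G\subseteq\bigwedge^{\goth g-a}G\otimes\Sym_{i+1}G$ can be routed, via $\ev^*(1)$ and $\goth g$-fold contraction, through $\goth g$ successive applications of the module action of $G^*$ pulled back by $\Phi^*$, each of which is (the $\delta$-component of) multiplication by an element of $\Phi^*(F)$ on the $\Sym$-factor; since $\goth g\ge \goth g-a$ applications suffice to land in $\Phi^*(F)\cdot L_i^{\goth g-a}G$ (one needs only that after stripping off the $\bigwedge^{\goth g-a}G$ part, at least one $\Phi^*(F)$-multiplication remains on the symmetric factor, which holds because $\goth g>\goth g-a$ as $a\ge 1$), the product vanishes in the quotient. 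I expect the main obstacle to be the $i=-1$ case: making Proposition~\ref{use-in-5.10} yield not just a degree bound but the precise containment in $\im(\bigwedge^a\Phi)\wedge\bigwedge^{\goth f-\goth g}F$ requires tracking that the repeated contractions of $(\bigwedge^\goth g\Phi)(\gamma_\goth g)$ decompose compatibly with the factorization $\goth g=a+(\goth g-a)$, i.e. that $(\bigwedge^\goth g\Phi)(\gamma_\goth g)=\sum (\bigwedge^a\Phi)(\gamma'_a)\wedge(\bigwedge^{\goth g-a}\Phi)(\gamma''_{\goth g-a})$, which follows from the comultiplication $\bigwedge^\goth g(G^*)\to\bigwedge^a(G^*)\otimes\bigwedge^{\goth g-a}(G^*)$ and functoriality of $\bigwedge^\bullet\Phi$.
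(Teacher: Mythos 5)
Your plan — reduce via Observation~\ref{H0} to the three explicit presentations of $\HH_0$ and attack each with Proposition~\ref{use-in-5.10} and Proposition~\ref{A3} — is exactly the route the paper takes, and your $i=0$ sketch is essentially right (modulo the slip $\Psi=\Phi^*$: the paper runs \ref{A3}.\ref{A3.d} with $\Psi=\Phi$, which is the direction that produces $\bigwedge^{\goth g+1-a}\Phi^*$ on the left, followed by one application of \ref{A3}.\ref{A3.c}).

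The $i=-1$ case, however, has the mechanics of Proposition~\ref{use-in-5.10} backwards, and this is precisely the step you flag as the one requiring care. First, the generators of $I_\goth g(\Phi)$ are the scalars $r=[(\bigwedge^\goth g\Phi)(\omega_{G^*})](\phi_\goth g)$ with $\phi_\goth g\in\bigwedge^\goth g(F^*)$ arbitrary; your $\omega_{F^*}\in\bigwedge^{\goth f}(F^*)$ gives an element of $\bigwedge^{\goth f-\goth g}(F^*)$, not a scalar, unless $\goth f=\goth g$. Second, in the paper's application of Proposition~\ref{use-in-5.10} the element that sits in the \emph{fixed} slot $c_p$ is $(\bigwedge^\goth g\Phi)(\omega_{G^*})$, while $b_r=f_{\goth f-\goth g+a}$ and $\alpha_q=\phi_\goth g$; the conclusion then gives an \emph{upper} bound $\deg\alpha'\le\goth g-a$ on the $\bigwedge^\bullet(F^*)$ factor, not a lower bound on the $\bigwedge^\bullet F$ factor $b'$, and the factor that lands in $\im\bigwedge^{\ge a}\Phi$ is $\alpha'\big((\bigwedge^\goth g\Phi)(\omega_{G^*})\big)$, via a subsequent application of \ref{A3}.\ref{A3.d}, not $b'$. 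Moreover, inspecting the proof of Proposition~\ref{use-in-5.10}, the new $b'_{r'}$ arise by \emph{wedging} pieces of basis elements together, not by contracting $b_r$; so your characterization of them as ``iterated contractions of $(\bigwedge^\goth g\Phi)(\gamma_\goth g)$'' does not reflect what the proposition delivers. (The comultiplication observation $(\bigwedge^\goth g\Phi)(\gamma_\goth g)=\sum(\bigwedge^a\Phi)(\gamma'_a)\wedge(\bigwedge^{\goth g-a}\Phi)(\gamma''_{\goth g-a})$ that you invoke is true and is implicitly used in the paper in passing from $\im\bigwedge^{\goth g-\deg\phi'}\Phi$ to $\im(\bigwedge^a\Phi)\wedge\bigwedge^{\goth f-\goth g}F$, but it cannot substitute for the degree bookkeeping just described.)

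For $1\le i$, your argument is far more roundabout than needed and it is not clear it would close as stated (``$\goth g$-fold contraction'' through $\Phi^*$ is not a well-defined operation on $L^{\goth g-a}_{i+1}G$). The paper's route is short: first observe $rG\subseteq\Phi^*(F^*)$ by taking $a=\goth g$ in the $i=0$ computation; then, since $0<\goth g-a+i+1$, exactness of the strand (\ref{KOS}) gives $L^{\goth g-a}_{i+1}G=\kappa\big(\bigwedge^{\goth g-a+1}G\otimes\Sym_iG\big)$; finally, peel one $G$-factor off the degree-$i$ symmetric part (using $\kappa(x\otimes gy')=g\cdot\kappa(x\otimes y')$) and absorb $rg\in\Phi^*(F^*)$. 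A single $\Phi^*(F^*)$-multiplication suffices; the inequality $\goth g>\goth g-a$ plays no role.
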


\begin{proof}We show that $I_\goth g(\Phi)\cdot (\mathcal C^{i,a}_{\Phi})_0\subseteq \im d_1$ for each relevant $i$. Consider the arbitrary element $r=[(\bigwedge^\goth g \Phi)(\omega_{G^*})](\phi_{\goth g})$   of $I_\goth g(\Phi)$, where
$\omega_{G^*}\in \bigwedge^\goth g G^*$ and  $\phi_{\goth g}\in \bigwedge^\goth g F^*$.

We  prove the assertion for  $i=-1$ by showing that $r\cdot 
\bigwedge^{\goth f-\goth g+a}F\subseteq \im (\bigwedge^a\Phi)\wedge \bigwedge^{\goth f-\goth g}F$. Let $f_{\goth f-\goth g+a}\in \bigwedge^{\goth f-\goth g+a}F$. Apply Proposition~\ref{use-in-5.10} in order to write $r\cdot f_{\goth f-\goth g+a}$, which is equal to
$$ f_{\goth f-\goth g+a}\wedge \phi_\goth g\Big(({\textstyle\bigwedge}^\goth g \Phi)(\omega_{G^*})\Big),$$
as a sum of elements of the form
$$f\wedge \phi\Big(({\textstyle\bigwedge}^\goth g \Phi)(\omega_{G^*})\Big),$$for  homogeneous elements  $f\in \bigwedge^\bullet F$ and $\phi\in \bigwedge^\bullet F$ with$$\deg \phi\le \goth g+\goth f-(\goth f-\goth g+a)-\goth g=\goth g-a.$$ Apply Proposition~\ref{A3}.\ref{A3.d} to see that 
$$f\wedge \phi\Big(({\textstyle\bigwedge}^\goth g \Phi)(\omega_{G^*})\Big)
= f\wedge ({\textstyle\bigwedge}^{\goth g-\deg \phi}  \Phi)
\Big(\Big[({\textstyle\bigwedge}^{\deg \phi} \Phi^*)(\phi)\Big](\omega_{G^*})\Big).
$$
This completes the proof when $i=-1$ because $a\le \goth g-\deg \phi$.

Consider $i=0$. In light of (\ref{pict1}), it suffices to show that \begin{equation}\label{sts}\textstyle 
r(\bigwedge^ {\goth g+1-a}G)\subseteq
(\bigwedge ^{\goth g+1-a}\Phi^*)(\bigwedge^{\goth g+1-a}F^*) .\end{equation}
Let $g_{\goth g+1-a}$ be an element of $\bigwedge^ {\goth g+1-a}G$. 
Observe that $$\textstyle\big[(\bigwedge^{a-1}\Phi)
[g_{\goth g+1-a}(\omega_{G^*})]\big] (\phi_{\goth g})$$ is an element of $\bigwedge^{\goth g+1-a}F^*$ and $\bigwedge^{\goth g+1-a}\Phi^*$ carries this element to 
\begingroup\allowdisplaybreaks\begin{align*} &\textstyle(\bigwedge^{\goth g+1-a}\Phi^*)
\Big(\big[(\bigwedge^{a-1}\Phi)
[g_{\goth g+1-a}(\omega_{G^*})]\big] (\phi_{\goth g})\Big)
 \\
=&\textstyle
[g_{\goth g+1-a}(\omega_{G^*})]
\Big((\bigwedge^{\goth g}\Phi^*)(\phi_{\goth g})\Big)
&&\text{by \ref{A3}.\ref{A3.d}}\\
=&\textstyle
g_{\goth g+1-a}\wedge \omega_{G^*}
\Big((\bigwedge^{\goth g}\Phi^*)(\phi_{\goth g})\Big)
&&\text{by \ref{A3}.\ref{A3.c}}\\
=&r\cdot g_{\goth g+1-a}.
\end{align*}   \endgroup
The claim (\ref{sts}) has been established. This completes the proof when $i=0$.

One further consequence of (\ref{sts}) is that $rG\subseteq \Phi^*(F^*)$. (Take $a$ to be $\goth g$ to obtain this conclusion.) 

We prove the assertion for $1\le i$ by showing that 
$$r L_{i+1}^{\goth g-a}G\subseteq \delta(F^*\otimes L_i^{\goth g-a}G),$$ in the notation of (\ref{pict2}). Observe that the exact sequence (\ref{KOS})
yields $$\textstyle  L_{i+1}^{\goth g-a}G=\kappa(\bigwedge^{\goth g-a+1}G\otimes \Sym_{i}G)$$since $0<\goth g-a+i+1$. On the other hand, 
$$\textstyle \kappa(\bigwedge^{\goth g-a+1}G\otimes \Sym_{i-1}G)\subseteq L_i^{\goth g-a}G.$$It follows that 
\begingroup\allowdisplaybreaks\begin{align*}\textstyle rL_{i+1}^{\goth g-a}G&\textstyle \subseteq rG\cdot \Big( \kappa(\bigwedge^{\goth g-a+1}G\otimes \Sym_{i-1}G)\Big)\subseteq rG\cdot \Big( L_i^{\goth g-a}G\Big)\\&\textstyle \subseteq \Phi^*(F^*)\cdot L_i^{\goth g-a}G =\delta (F^*\otimes L_i^{\goth g-a}G),\end{align*}\endgroup where the multiplication ``$\cdot$'' means multiply into the the symmetric algebra factor. The proof is complete for $1\le i$.
\end{proof}

\bigskip 

\section{The acyclicity of $\mathcal C^{i,a}_\Phi$.}

\bigskip
Theorem~\ref{main} is the main result of the paper. It asserts if $\Phi$ is sufficiently general, then $\mathcal C^{i,a}_\Phi$ is a resolution of $\HH_0(\mathcal C^{i,a}_\Phi)$  and $\HH_0(\mathcal C^{i,a}_\Phi)$ is a torsion-free $(R/I_\goth g(\Phi))$-module, for $-1\le i$ and $1\le a\le \goth g$. 
The depth-sensitivity assertion, Corollary~\ref{emain.cc},  was promised in (\ref{DS}), and is in fact our main motivation for writing the paper.
 Corollary~\ref{max-CM} records the fact, promised in (\ref{promise}), that in the generic situation, with
$-1\le i\le \goth f-\goth g$, then 
 $\HH_0(\mathcal C_\Phi^{i,a})$ is a maximal Cohen-Macaulay module of rank $\binom{\goth g-1}{a-1}$ over the determinantal ring $R/I_\goth g(\Phi)$. 
Recall, from \ref{H0}, that 
$$\HH_0(\mathcal C^{i,a}_\Phi)=\begin{cases}
\frac{\bigwedge^{\goth f-\goth g+a}F}{\im (\bigwedge^a\Phi)\wedge \bigwedge^{\goth f-\goth g}F},&\text{if $i=-1$,}\vspace{5pt}\\
\coker (\bigwedge^{\goth g-a+1}\Phi^*),&\text{if $i=0$, and}\vspace{5pt}\\
\frac{L_{i+1}^{\goth g-a}G}{\Phi^*(F)\cdot L_{i}^{\goth g-a}G},&\text{if $1\le i$.}\end{cases}$$
Theorem~\ref{main}
follows readily from Lemma~\ref{key} by way of the acyclicity lemma.
If  $I_1(\Phi)=R$, then it is shown in 
Lemma~\ref{key} that  
$$\mathcal C_\Phi^{i,a} \quad \text{and} \quad \mathcal C_{\Phi'}^{i,a}\oplus \mathcal C_{\Phi'}^{i,a-1}$$ have isomorphic homology for some smaller 
$R$-module homomorphism $\Phi'$.

\begin{lemma}\label{key} Adopt Data~{\rm\ref{8data-8-8}} and Notation~{\rm\ref{notation-9}} with $\goth g\le \goth f$ and $\Phi''$ an isomorphism. Let $a$ and $i$ be integers with   $1\le a\le \goth g-1$. Recall the complex $\mathcal C^{i,a}_\Phi$ of Definition~{\rm\ref{CiaPhi}} and Observation~{\rm\ref{complex}}. 
Then there 
there exists a canonical complex $\mathcal D$ of free $R$-modules and 
canonical short exact sequences
\begin{equation}\label{ses1}0\to \Tot\Big(\mathbb L^{\goth f-i-1,\goth g-a}_{\pi'\circ \Phi}\otimes \mathbb A(\Phi'')\Big)\xrightarrow{\incl^\dagger}\mathcal C^{i,a}_\Phi\xrightarrow{\quot^\dagger} \mathcal D\to 0\end{equation}
and
\begin{equation}\label{ses2}0\to \mathcal C^{i,a}_{\Phi'}\oplus\big(\mathcal C^{i,a-1}_{\Phi'}\otimes {G''}^*\big) \xrightarrow{\incl^\ddag}\mathcal D\xrightarrow{\quot^\ddag}
\Tot\Big(\mathbb K^{\goth f-\goth g-i-2,a}_{\pi'\circ \Phi}[-i-2]\otimes \mathbb B(\Phi'')\Big)\to 0.\end{equation}
In particular,   there are  canonical 
isomorphisms
\begin{equation}\label{iso-3}\HH_j(\mathcal C_{\Phi}^{i,a})\cong \HH_j(\mathcal C_{\Phi'}^{i,a})\oplus \Big(\HH_j(\mathcal C_{\Phi'}^{i,a-1})\otimes {G''}^*\Big)\end{equation} for all integers $j$. 
\end{lemma}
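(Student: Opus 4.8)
The plan is to build $\mathcal D$ directly as the complex sitting "between" the two halves of $\mathcal C^{i,a}_\Phi$, and then to recognize the two short exact sequences as the result of applying Proposition~\ref{87.5} to each half of the concatenation $\mathbb K \to \bigwedge \to \mathbb L$. Recall from Definition~\ref{CiaPhi} that $\mathcal C^{i,a}_\Phi$ is obtained by gluing $\mathbb K^{\goth f-\goth g-i-1,a}_\Phi[-i-2]$ to $\mathbb L^{\goth f-i-1,\goth g-a}_\Phi$ across the single module $\bigwedge^{\goth f-\goth g+a-i-1}F$ in position $i+1$. The idea is to decompose $F$ and $G$ as in Data~\ref{8data-8-8} (using that $\Phi''$ is an isomorphism, so $\Phi$'s image contains a basis element of $F$), and then apply Proposition~\ref{87.5}.\ref{87.5.b} to the $\mathbb L$-piece and Proposition~\ref{87.5}.\ref{87.5.a} to the $\mathbb K$-piece. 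Since $1\le a$ and $1\le \goth g-a$ (this is exactly why we need $1\le a\le \goth g-1$), the hypothesis $p\neq 0$ in both parts of Proposition~\ref{87.5} is satisfied with $p=\goth g-a$ for $\mathbb L$ and $p=a$ for $\mathbb K$.

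First I would define $\mathcal D$ to be the complex obtained by splicing together the "quotient" targets: on the $\mathbb L$-side, Proposition~\ref{87.5}.\ref{87.5.b} gives a surjection $\mathbb L^{\goth f-i-1,\goth g-a}_\Phi \twoheadrightarrow \mathbb L^{\goth f-i-2,\goth g-a}_{\Phi'}\otimes F''\oplus \mathbb L^{\goth f-i-2,\goth g-a-1}_{\Phi'}\otimes F''\otimes G''$; on the $\mathbb K$-side, Proposition~\ref{87.5}.\ref{87.5.a} exhibits $\mathbb K^{\goth f-\goth g-i-1,a}_\Phi$ as an extension of $\Tot(\mathbb K^{\goth f-\goth g-i-2,a}_{\pi'\circ\Phi}\otimes \mathbb B(\Phi''))$ by $\mathbb K^{\goth f-\goth g-i-1,a}_{\Phi'}\oplus(\mathbb K^{\goth f-\goth g-i-1,a-1}_{\Phi'}\otimes {G''}^*)$; and at the patching module $\bigwedge^{\goth f-\goth g+a-i-1}F$ one uses the direct sum decomposition $\bigwedge^{\goth f-\goth g+a-i-1}F=\bigoplus_{j}\bigwedge^{\goth f-\goth g+a-i-1-j}F'\otimes\bigwedge^j F''$, which since $\rank F''=1$ has just two summands. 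Then $\mathcal D$ is defined to be $\mathcal C^{i,a}_{\Phi'}\oplus(\mathcal C^{i,a-1}_{\Phi'}\otimes {G''}^*)$ glued to $\Tot(\mathbb K^{\goth f-\goth g-i-2,a}_{\pi'\circ\Phi}[-i-2]\otimes\mathbb B(\Phi''))$ — but really the cleanest route is to simply \emph{define} $\mathcal D$ via the maps $\quot^\dagger$ and $\incl^\dagger$, $\quot^\ddag$, $\incl^\ddag$ of Notation~\ref{notation-9}, verify the sequences (\ref{ses1}) and (\ref{ses2}) spot-by-spot using Observation~\ref{8-21-0} exactly as was done in the proof of Proposition~\ref{87.5}, and check compatibility with the differentials at the one patching spot by a direct diagram chase mirroring Figures~\ref{Aug-23-2} and~\ref{Aug-23-3}.

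The key steps, in order: (1) set up the decomposition and note $\Phi''$ an isomorphism; (2) observe that away from positions $i,i+1,i+2$ the complex $\mathcal C^{i,a}_\Phi$ agrees with $\mathbb L$ (for degrees $<i+1$) or $\mathbb K$ (for degrees $>i+1$), so (\ref{ses1}) and (\ref{ses2}) in those positions are literally the sequences of Proposition~\ref{87.5} (with $N=\goth f-i-1$, $p=\goth g-a$ for $\mathbb L$; $N=\goth f-\goth g-i-1$, $p=a$ for $\mathbb K$); (3) handle the three exceptional positions by hand: here the modules are $\bigwedge^{\goth f-i}F\otimes L_1^{\goth g-a}G$, $\bigwedge^{\goth f-\goth g+a-i-1}F$, and $\bigwedge^{\goth f-\goth g-i-1}F\otimes K_0^a(G^*)=\bigwedge^{\goth f-\goth g-i-1}F\otimes\bigwedge^a(G^*)$, and one checks that $\incl^\dagger,\quot^\dagger$ on the first two and $\incl^\ddag,\quot^\ddag$ on the last two fit into the appropriate short exact sequences (using $\rank F''=\rank G''=1$ so the multiplicities collapse), and that they commute with $d_{i+1}$ and $d_{i+2}$, which are given explicitly via $\bigwedge^a\Phi$ and $\bigwedge^{\goth g+1-a}\Phi\otimes\kappa$ in Definition~\ref{CiaPhi}; (4) conclude that (\ref{ses1}) and (\ref{ses2}) are short exact sequences of complexes, with $\mathcal D$ the evident middle term; (5) deduce (\ref{iso-3}): because $\Phi''$ is an isomorphism, $\mathbb A(\Phi'')$ and $\mathbb B(\Phi'')$ are split exact, so $\Tot(\mathbb L^{\goth f-i-1,\goth g-a}_{\pi'\circ\Phi}\otimes\mathbb A(\Phi''))$ and $\Tot(\mathbb K^{\goth f-\goth g-i-2,a}_{\pi'\circ\Phi}[-i-2]\otimes\mathbb B(\Phi''))$ have zero homology, whence the long exact homology sequences of (\ref{ses1}) and (\ref{ses2}) give $\HH_j(\mathcal C^{i,a}_\Phi)\cong\HH_j(\mathcal D)\cong\HH_j(\mathcal C^{i,a}_{\Phi'})\oplus(\HH_j(\mathcal C^{i,a-1}_{\Phi'})\otimes{G''}^*)$.

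The main obstacle I anticipate is step (3): verifying that the gluing maps on the three exceptional modules are genuinely compatible with the exceptional differentials $d_{i+1}$ (the $\bigwedge^{\goth g+1-a}\Phi$-then-$\kappa$ map) and $d_{i+2}$ (the $\bigwedge^a\Phi$ map), i.e. producing the analogues of the commuting squares in Figures~\ref{Aug-23-2} and~\ref{Aug-23-3} at the patch. This requires carefully tracking how $\incl^\dagger$ and $\incl^\ddag$ interact with the $\ev^*(1)$-insertions and the module actions defining $\eta_\Phi$, $\Kos_\Phi$, and $\kappa$, and confirming the sign conventions match those used in the mapping cone $\Tot$; but since the two halves are already known to be complexes (Observation~\ref{complex}) and Proposition~\ref{87.5} already did the analogous check for $\mathbb K$ and $\mathbb L$ separately, this is a finite, if slightly tedious, coordinate-free computation of the same flavor as the proof of Observation~\ref{complex}. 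A secondary point worth spelling out is why the multiplicity factors $\chi(\cdots)$ appearing in Observation~\ref{8-21-0} do not obstruct exactness at the exceptional spots — here one uses that the relevant $(p,q)$ pairs are $(a,0)$ and $(\goth g-a,1)$ and the corresponding $\chi$'s are all $1$ under the running hypothesis $1\le a\le\goth g-1$.
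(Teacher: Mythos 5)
Your plan is correct and follows the paper's own proof: define $\mathcal D$ as the cokernel of the injection of $\Tot\big(\mathbb L^{\goth f-i-1,\goth g-a}_{\pi'\circ\Phi}\otimes\mathbb A(\Phi'')\big)$ into $\mathcal C^{i,a}_\Phi$ (which gives (\ref{ses1}) at once), then apply Proposition~\ref{87.5}.\ref{87.5.a} to the $\mathbb K$-half of $\mathcal D$ to get a surjection onto $\Tot\big(\mathbb K^{\goth f-\goth g-i-2,a}_{\pi'\circ\Phi}[-i-2]\otimes\mathbb B(\Phi'')\big)$, and finally use that both total complexes are exact because $\Phi''$ is an isomorphism. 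The one point you should spell out when executing your step (3) is that the identification of $\mathcal C^{i,a}_{\Phi'}\oplus\big(\mathcal C^{i,a-1}_{\Phi'}\otimes {G''}^*\big)$ with $\ker\big(\quot^\ddag|_{\mathcal D}\big)$ is \emph{not} the literal map $\incl^\ddag$ at the $\mathbb L$-positions: at those spots the paper uses a swap of the two summands composed with $1\otimes\Phi''$ and $1\otimes\Kos_{\Phi''}$ (and at position $i+1$ the map $\big[\begin{smallmatrix}\incl & \incl^\dagger\circ(1\otimes\Phi'')\end{smallmatrix}\big]$), so the compatibility diagram you are anticipating at the patch really does require this twist and is not a plain inclusion/quotient check.
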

\begin{remarks-no-advance}\label{6.2.1}\begin{enumerate}
[\rm(a)]
\item\label{6.2.1.b} Lemma~\ref{key} does apply when $a=1$  provided one interprets $\mathcal C^{i,a-1}_{\Phi'}$  using Remark~\ref{July-6}. The complex $\mathcal C^{i,0}_{\Phi'}$  of Remark~\ref{July-6} is split exact; so the ultimate conclusion of Lemma \ref{key} when $a=1$   is  
$$\HH_j(\mathcal C_{\Phi}^{i,1})\cong \HH_j(\mathcal C_{\Phi'}^{i,1})
,$$ for all integers $j$.  In light of Observation~\ref{new=old}, this conclusion is well-known.

\item\label{6.2.1.c}  Lemma~\ref{key} is false when $a=\goth g$, even if one interprets $\mathcal C^{i,\goth g}_{\Phi'}$  using Remark~\ref{July-6}.
The correct statement is that 
$$\HH_j(\mathcal C_{\Phi}^{i,\goth g})\cong \HH_j(\mathcal C_{\Phi'}^{i,\goth g-1})\quad\text{and}\quad\HH_j(\mathcal C_{\Phi}^{i,\goth g}) \not \cong\HH_j(\mathcal C_{\Phi'}^{i,\goth g})\oplus \HH_j(\mathcal C_{\Phi'}^{i,\goth g-1}).$$ 
 Indeed, 
\begingroup\allowdisplaybreaks\begin{align*}\HH_j(\mathcal C_{\Phi}^{i,\goth g})&
\cong \HH_j(\mathcal C_{\Phi}^{i+1,1})&&\text{by Observation~\ref{new=old}}\\
&\cong \HH_j(\mathcal C_{\Phi'}^{i+1,1})&&\text{by (\ref{6.2.1.b})}\\
&\cong \HH_j(\mathcal C_{\Phi'}^{i,\goth g-1})&&\text{by Observation~\ref{new=old}}.
\end{align*}\endgroup On the other hand, the complex $\mathcal C_{\Phi'}^{i,\goth g}$ has non-zero homology because the complex is  non-zero in exactly one position; see Remark~\ref{July-6}.
\end{enumerate}
\end{remarks-no-advance}

\begin{proof-of-key} 
We know from Definition~\ref{CiaPhi} that 
$\mathcal C^{i,a}_\Phi$ is the complex 
$$\textstyle 0\to \mathbb K^{\goth f-\goth g-i-1,a}_\Phi[-i-2]\xrightarrow{d} \bigwedge^{\goth f-\goth g+a-i-1}F \xrightarrow{d} \mathbb L^{\goth f-i-1,\goth g-a}_\Phi\to 0,$$with $[\mathcal C^{i,a}_\Phi]_{i+1}=\bigwedge^{f-g+a-i-1}F$.
The parameter $\goth g-a$ is positive; so, we 
  know from Proposition~\ref{87.5}.\ref{87.5.b} that 
$$0\to \Tot\Big(\mathbb L^{\goth f-i-1,\goth g-a}_{\pi'\circ \Phi}\otimes \mathbb A(\Phi'')\Big)\xrightarrow{\incl^\dagger}\mathbb L^{\goth f-i-1,\goth g-a}_\Phi\xrightarrow{\quot^\dagger} 
{\begin{matrix}\mathbb L^{\goth f-i-2,\goth g-a}_{\Phi'}\otimes F''\\\oplus\\   \mathbb L^{\goth f-i-2,\goth g-a-1}_{\Phi'}\otimes {F''} \otimes G''\end{matrix}}\to 0$$ is 
a short exact sequence of complexes.
 It follows that \begin{equation}\label{recent}\Tot\Big(\mathbb L^{\goth f-i-1,\goth g-a}_{\pi'\circ \Phi}\otimes \mathbb A(\Phi'')\Big)\xrightarrow{\incl^\dagger}\mathcal C^{i,a}_\Phi\end{equation} is an injection of complexes. Let $\mathcal D$ be the cokernel of (\ref{recent}). Observe that (\ref{ses1}) is a short exact sequence of complexes and that $\mathcal D$ is isomorphic  to 
$$\textstyle 0\to \mathbb K^{\goth f-\goth g-i-1,a}_\Phi[-i-2]\xrightarrow{d} \bigwedge^{\goth f-\goth g+a-i-1}F  \xrightarrow{\quot^\dagger\circ d} 
{\begin{matrix}\mathbb L^{\goth f-i-2,\goth g-a}_{\Phi'}\otimes F''\\\oplus\\   \mathbb L^{\goth f-i-2,\goth g-a-1}_{\Phi'}\otimes {F''} \otimes G''\end{matrix}}\to 0.$$
The parameter $a$ is positive; and therefore, we also know from Proposition~\ref{87.5}.\ref{87.5.a} that 
\begin{align*}0\to{\begin{matrix}\mathbb K^{\goth f-\goth g-i-1,a}_{\Phi'}\\\oplus\\  (\mathbb K^{\goth f-\goth g-i-1,a-1}_{\Phi'}\otimes {G''}^*)\end{matrix}}\xrightarrow{\incl^\ddag} \mathbb K^{\goth f-\goth g-i-1,a}_\Phi\xrightarrow{\quot^\ddag}
\Tot\Big(\mathbb K^{\goth f-\goth g-i-2,a}_{\pi'\circ \Phi} \otimes 
\mathbb B(\Phi'')
\Big)\to 0
\end{align*}is a short exact sequence of complexes. It follows that 
$$\mathcal D \xrightarrow{\quot^\ddag}\Tot\Big(\mathbb K^{\goth f-\goth g-i-2,a}_{\pi'\circ \Phi} \otimes 
\mathbb B(\Phi'')
\Big)[-i-2]$$ is a surjection of complexes with kernel isomorphic  to the complex:
{\footnotesize\begin{align}\label{8:11}\textstyle 0\to
{\begin{matrix}\mathbb K^{\goth f-\goth g-i-1,a}_{\Phi'}[-i-2]\\\oplus\\  \mathbb K^{\goth f-\goth g-i-1,a-1}_{\Phi'}[-i-2]\otimes {G''}^*\end{matrix}}
\xrightarrow{d\circ \incl^{\ddag}} \bigwedge^{\goth f-\goth g+a-i-1}F  \xrightarrow{\quot^\dagger\circ d} 
{\begin{matrix}\mathbb L^{\goth f-i-2,\goth g-a}_{\Phi'}\otimes F''\\\oplus\\   \mathbb L^{\goth f-i-2,\goth g-a-1}_{\Phi'}\otimes {F''} \otimes G''\end{matrix}}\to 0.
\end{align}}Thus, 
\begin{equation}\label{Thus}0\to (\ref{8:11}) \xrightarrow{\incl^\ddag}\mathcal D \xrightarrow{\quot^\ddag}\Tot\Big(\mathbb K^{\goth f-\goth g-i-2,a}_{\pi'\circ \Phi} \otimes 
\mathbb B(\Phi'')
\Big)[-i-2]\to 0\end{equation} is a short exact sequence  of complexes. Observe that 
\begin{equation}\label{Observe} \xymatrix{\mathcal C^{i,a}_{\Phi'} \oplus(\mathcal C^{i,a-1}_{\Phi'}\otimes {G''}^*)
\ar[d]
\\
{\rm(\ref{8:11}),}
}\end{equation}given by
{\small$$\xymatrix{
0\to
{\begin{matrix}\mathbb K^{\goth f-\goth g-i-1,a}_{\Phi'}[-i-2]\\\oplus\\  \mathbb K^{\goth f-\goth g-i-1,a-1}_{\Phi'}[-i-2]\otimes {G''}^*\end{matrix}}
\ar[r]^(.6){\left[\smallmatrix d&0\\0&d \endsmallmatrix \right]}
\ar[d]^{\left[\smallmatrix \id&0\\0&\id\endsmallmatrix \right]}& 
{\begin{matrix} \bigwedge^{\goth f-\goth g+a-i-1}F'\\\oplus\\
\bigwedge^{\goth f-\goth g+a-i-2}F'\otimes {G''}^*\end{matrix}}  \ar[r]^(.4){\left[\smallmatrix d&0\\0&d \endsmallmatrix \right]}\ar[d]^{\left[\smallmatrix \incl&\incl^\dagger\circ (1\otimes \Phi'')\endsmallmatrix\right]}& 
{\begin{matrix}\mathbb L^{\goth f-i-2,\goth g-a-1}_{\Phi'}\\\oplus\\   \mathbb L^{\goth f-i-2,\goth g-a}_{\Phi'} \otimes {G''}^*\end{matrix}}\to 0\ar[d]^{\left[
\smallmatrix 0&1\otimes \Phi''\\1\otimes \Kos_{\Phi''}&0 \endsmallmatrix\right]}\\
0\to
{\begin{matrix}\mathbb K^{\goth f-\goth g-i-1,a}_{\Phi'}[-i-2]\\\oplus\\  \mathbb K^{\goth f-\goth g-i-1,a-1}_{\Phi'}[-i-2]\otimes {G''}^*\end{matrix}}
\ar[r]^(.6){d\circ \incl^{\ddag}}& \bigwedge^{\goth f-\goth g+a-i-1}F  \ar[r]^(.4){\quot^\dagger\circ d}& 
{\begin{matrix}\mathbb L^{\goth f-i-2,\goth g-a}_{\Phi'}\otimes F''\\\oplus\\   \mathbb L^{\goth f-i-2,\goth g-a-1}_{\Phi'}\otimes {F''} \otimes G''\end{matrix}}\to 0,
}$$} 

\noindent is an isomorphism of complexes. (The complexes $\mathcal C^{i,a}_{\Phi'}$ and $\mathcal C^{i,a-1}_{\Phi'}$ have been read from Definition~\ref{CiaPhi}.) Combine (\ref{Thus}) and (\ref{Observe}) in order to see that (\ref{ses2}) is a short exact sequence of complexes. 
 The isomorphisms of (\ref{iso-3}) follow immediately from the short exact sequences (\ref{ses1}) and (\ref{ses2}) because the two total complexes have all homology equal to zero since $\Phi''$ is an isomorphism.
\qed\end{proof-of-key}

In Lemma~\ref{6.3} we iterate Lemma~\ref{key}. The basic set-up is similar to, but not the same as, the set-up of Data~\ref{8data-8-8}. 

\begin{data}\label{data6} 
Let $R$ be a commutative Noetherian ring, $F$ and $G$ be free $R$-modules of rank $\goth f$ and $\goth g$, respectively, with $\goth g\le \goth f$, and $\Phi:G^*\to F$ be an $R$-module homomorphism.
Decompose $F$ and $G$ as
$$F=F'\oplus F'' \quad\text{and}\quad G=G'\oplus G'',$$ where $F'$, $F''$, $G'$ and $G''$  are free $R$-modules and $\rank F''=\rank G''=r$ for some  integer $r$ with $1\le r\le \goth g-1$ and 
and  let $$F^*={F'}^*\oplus {F''}^*\quad\text{and}\quad  G^*={G'}^*\oplus {G''}^*$$ be the corresponding decompositions of $F^*$ and $G^*$.
Assume that
 \begin{equation}\label{Phi'}\Phi=\bmatrix \Phi_r'&0\\0&\Phi_r''\endbmatrix,\end{equation} where $\Phi_r':{G'}^*\to F'$ is an $R$-module homomorphism and $\Phi_r'':{G''}^*\to F''$ is an $R$-module isomorphism.
\end{data}

\begin{lemma}\label{6.3} Adopt Data~{\rm\ref{data6}}. Let $i$ and $a$ be integers with $1\le a\le \goth g$. Recall the complex $\mathcal C^{i,a}_\Phi$ of Definition~{\rm\ref{CiaPhi}} and Observation~{\rm\ref{complex}}.
\begin{enumerate}[\rm(a)]
\item \label{6.3.a}If $1\le r\le \goth g-1$, then
\begin{equation}\label{6.3gts}\HH_j(\mathcal C^{i,a}_\Phi)\cong 
\bigoplus\limits_{\beta=1}^{\goth g-r}
\HH_j(\mathcal C^{i,\beta}_{\Phi'_{r}})^{\binom{r}{a-\beta}}.\end{equation}
\item\label{6.3.b} If $r=\goth g-1$, then the following statements hold$:$ \begin{enumerate}[\rm(i)]
\item \label{6.3.b.i}
$\HH_j(\mathcal C^{i,a}_\Phi)\cong \HH_j(\mathcal C^{i,1}_{\Phi'_{\goth g-1}})^{\binom{\goth g-1}{a-1}}$, 
\item\label{6.3.b.ii}if 
$I_{\goth g}(\Phi)=R$, then 
the complex $\mathcal C^{i,a}_\Phi$ is split exact$;$ 
and,  \item \label{6.3.b.iii}if  
$ I_{\goth g}(\Phi)$ is a proper ideal  of grade at least 
$\goth f-\goth g+1$,
then $\mathcal C^{i,a}_\Phi$ is a resolution of $(R/I_{\goth g}(\Phi))^{\binom{\goth g-1}{a-1}}$.\end{enumerate}\end{enumerate}
\end{lemma}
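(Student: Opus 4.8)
The strategy is to iterate Lemma~\ref{key} exactly $r$ times, peeling off one rank-one summand of $F''$ and $G''$ at each stage, and then read off the consequences. To set this up, I would first record the elementary combinatorial identity that governs the bookkeeping: if we write $\Phi''_r$ as an $r$-fold ``staircase'' direct sum of rank-one isomorphisms and apply Lemma~\ref{key} once, the isomorphism (\ref{iso-3}) says
\begin{equation*}\HH_j(\mathcal C^{i,a}_\Phi)\cong \HH_j(\mathcal C^{i,a}_{\widetilde\Phi})\oplus \HH_j(\mathcal C^{i,a-1}_{\widetilde\Phi})^{\oplus 1},\end{equation*}
where $\widetilde\Phi$ has the last rank-one block removed (I suppress the harmless tensor factor ${G''}^*$, which is a free module of rank one). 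Iterating $r$ times and collecting terms, the multiplicity of $\HH_j(\mathcal C^{i,\beta}_{\Phi'_r})$ becomes the number of length-$r$ $\pm$-paths from $a$ to $\beta$ that never step below the ``floor'' at which the complex degenerates, i.e. $\binom{r}{a-\beta}$ once one checks that the degenerate boundary terms $\mathcal C^{i,0}_{(\cdot)}$ (split exact, by Remark~\ref{July-6} and Remark~\ref{6.2.1}.\ref{6.2.1.b}) contribute nothing to homology and that $\beta$ never exceeds $\goth g-r$ in a surviving term. This gives part~(\ref{6.3.a}), formula~(\ref{6.3gts}); the proof is a straightforward induction on $r$, with Lemma~\ref{key} supplying the inductive step and Pascal's identity $\binom{r-1}{a-\beta}+\binom{r-1}{a-1-\beta}=\binom{r}{a-\beta}$ supplying the arithmetic.

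For part~(\ref{6.3.b}), specialize $r=\goth g-1$ in (\ref{6.3gts}). Then $\goth g-r=1$, so the only surviving index is $\beta=1$, and the sum collapses to the single term $\HH_j(\mathcal C^{i,1}_{\Phi'_{\goth g-1}})^{\binom{\goth g-1}{a-1}}$, which is (\ref{6.3.b.i}). Here $\Phi'_{\goth g-1}:{G'}^*\to F'$ with $\rank G'=1$ and $\rank F'=\goth f-\goth g+1$. Now $\mathcal C^{i,1}_{\Phi'_{\goth g-1}}$ is, via Observation~\ref{new=old}, (up to tensoring with an invertible module) the classical generalized Eagon-Northcott complex $\mathcal C^i_{\Phi'_{\goth g-1}}$ associated to a map into a free module with a one-dimensional source; equivalently, after the identification $\mathcal C^i_\Phi\cong\mathcal C^{i,1}_\Phi$ one is looking at the generalized Eagon-Northcott complex of an ${(\goth f-\goth g+1)}\times 1$ matrix. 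For a one-column matrix $\Phi'_{\goth g-1}$ the ideal $I_1(\Phi'_{\goth g-1})$ equals $I_\goth g(\Phi)$ (the rank-one-block change of basis makes the maximal minors of $\Phi$ agree with the entries of $\Phi'_{\goth g-1}$), and the generalized Eagon-Northcott complexes of a single column are the classical and well-understood ones: each $\mathcal C^i_{\Phi'_{\goth g-1}}$ is split exact when $I_1(\Phi'_{\goth g-1})=R$, and is a resolution of $R/I_1(\Phi'_{\goth g-1})$ (when $i$ is in the appropriate range $-1\le i\le\goth f-\goth g$, which is exactly the range making the complex have the right length and $\HH_0$) provided $\grade I_1(\Phi'_{\goth g-1})\ge \goth f-\goth g+1$. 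This is the classical acyclicity theorem for generalized Eagon-Northcott complexes quoted in the introduction (and for a single column it is elementary: the complex is essentially the Koszul-type resolution built on the column). Combining this with (\ref{6.3.b.i}): if $I_\goth g(\Phi)=R$ then every $\HH_j(\mathcal C^{i,1}_{\Phi'_{\goth g-1}})=0$ and, the modules being free, $\mathcal C^{i,a}_\Phi$ is split exact, giving (\ref{6.3.b.ii}); and if $I_\goth g(\Phi)$ is proper of grade $\ge\goth f-\goth g+1$, then $\HH_j(\mathcal C^{i,1}_{\Phi'_{\goth g-1}})=0$ for $j\ge 1$ and $\HH_0=R/I_\goth g(\Phi)$, so $\HH_j(\mathcal C^{i,a}_\Phi)=0$ for $j\ge 1$ and $\HH_0(\mathcal C^{i,a}_\Phi)\cong (R/I_\goth g(\Phi))^{\binom{\goth g-1}{a-1}}$, which is (\ref{6.3.b.iii}).

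\textbf{The main obstacle.} The genuinely delicate point is not the homological algebra but keeping the iteration of Lemma~\ref{key} honest at the boundary, i.e.\ correctly accounting for the degenerate complexes $\mathcal C^{i,0}_{(\cdot)}$ and $\mathcal C^{i,\goth g-r+1}_{(\cdot)}$ that appear transiently during the induction. Lemma~\ref{key} is only stated for $1\le a\le\goth g-1$, and Remark~\ref{6.2.1} warns that it is genuinely false at $a=\goth g$; so I must make sure that at no stage of the $r$-fold iteration do I apply the lemma to a complex whose upper index has reached the forbidden value relative to the current ranks. The clean way to do this is to run the induction on $r$ from the outside in --- at each step the ambient $\goth g$ drops by one along with $r$, so ``$a\le\goth g-1$'' is preserved precisely because the surviving indices stay in $1\le\beta\le\goth g-r$ --- and to invoke Remark~\ref{6.2.1}.\ref{6.2.1.b} to dispose of the $\mathcal C^{i,0}$ terms (they are split exact, hence contribute zero homology and do not affect the count, while still being legitimately produced by the lemma). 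Once that bookkeeping is pinned down, everything else is Pascal's triangle and the classical Eagon-Northcott facts.
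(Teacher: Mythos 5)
Your proposal is correct and follows essentially the same route as the paper: iterate Lemma~\ref{key} one rank-one block at a time, use Pascal's identity $\binom{r-1}{a-\beta}+\binom{r-1}{a-1-\beta}=\binom{r}{a-\beta}$ for the inductive bookkeeping, and for part~(b) recognize $\mathcal C^{i,1}_{\Phi'_{\goth g-1}}$ as (a twist of) the Koszul complex on the entries of the one-column map $\Phi'_{\goth g-1}$, whose first Fitting ideal is $I_{\goth g}(\Phi)$. One caution about your boundary discussion: the assertion that ``$a\le\goth g-1$ is preserved'' at each stage is not literally true (a step that holds $a$ fixed while the ambient $\goth g$ drops can push $a$ up to the current $\goth g$), and the paper avoids this by carrying the indicators $\chi(a\le\goth g-1)$ and $\chi(2\le a)$ through the induction and observing at the very end that they are absorbed by the vanishing of $\binom{r-1}{a-\beta}$ and $\binom{r-1}{a-1-\beta}$; also, the parenthetical restriction $-1\le i\le\goth f-\goth g$ in your Koszul step is unnecessary, since for a rank-one source $\mathcal C^{i,1}_{\Phi'_{\goth g-1}}$ is a twist of the same Koszul complex for every $i$.
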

\begin{proof}(\ref{6.3.a}). We are given $$\Phi=\bmatrix \Phi_{r}'&0\\0&\Phi_{r}''\endbmatrix,$$ where $\Phi_{r}''$ is an isomorphism of free modules of rank $r$. We may rearrange the data so that
$$\Phi=\bmatrix \Phi_{r}'&0&0\\0&\Phi'''&0\\0&0&\Phi'' \endbmatrix,$$
where $\Phi''$ is an isomorphism of free modules of rank one and  $\Phi'''$ is an isomorphism of free modules of rank $r-1$. Let $$\Phi'=\bmatrix \Phi_{r}'&0\\0&\Phi'''\endbmatrix.$$ Apply Lemma~\ref{key} and Remarks \ref{6.2.1}.\ref{6.2.1.b} and \ref{6.2.1}.\ref{6.2.1.c} to obtain
\begin{equation}\label{6.2.1*}\HH_j(\mathcal C^{i,a}_\Phi)\cong
\chi(a\le \goth g-1)\HH_j(\mathcal C^{i,a}_{\Phi'})
\oplus \chi(2\le a)
\HH_j(\mathcal C^{i,a-1}_{\Phi'}),\end{equation}
where $\chi$ is described in \ref{chi}.
Notice that (\ref{6.2.1*}) agrees with (\ref{6.3gts}) when $r=1$ because
$$\binom{1}{a-\beta}=\begin{cases} 
1,&\text{if $\beta=a$ and $1\le a\le \goth g-1$,}\\
1,&\text{if $\beta=a-1$ and $2\le a\le \goth g$},\\
0,&\text{if $\beta\not\in \{a-1,a\}$ and $1\le \beta\le \goth g-1$.}\end{cases}$$
Induction on  $r$ applied to  (\ref{6.2.1*}) now yields
$$\HH_j(\mathcal C^{i,a}_\Phi)\cong
\chi(a\le \goth g-1)\bigoplus\limits_{\beta=1}^{(\goth g-1)-(r-1)}
\HH_j(\mathcal C^{i,\beta}_{\Phi'_{r}})^{\binom{r-1}{a-\beta}}
\oplus \chi(2\le a)
\bigoplus\limits_{\beta=1}^{(\goth g-1)-(r-1)}
\HH_j(\mathcal C^{i,\beta}_{\Phi'_{r}})^{\binom{r-1}{a-1-\beta}}
.$$The constraints $\chi(a\le \goth g-1)$ and $\chi(2\le a)$ are redundant. Indeed, if $\goth g\le a$, then 
$$\textstyle\beta\le \goth g-r\implies r\le \goth g-\beta\le a-\beta\implies \binom{r-1}{a-\beta}=0,$$ and if $a\le 1$, then 
$$\textstyle 1\le \beta \implies a-1-\beta \le -1\implies \binom{r-1}{a-1-\beta}=0.$$

Thus,  \begingroup\allowdisplaybreaks\begin{align*}\HH_j(\mathcal C^{i,a}_\Phi)&\cong
\bigoplus\limits_{\beta=1}^{\goth g-r}
\HH_j(\mathcal C^{i,\beta}_{\Phi'_{r}})^{\binom{r-1}{a-\beta}} \oplus
\bigoplus\limits_{\beta=1}^{\goth g-r}
\HH_j(\mathcal C^{i,\beta}_{\Phi'_{r}})^{\binom{r-1}{a-1-\beta}}\\
&\cong
\bigoplus\limits_{\beta=1}^{\goth g-r}
\HH_j(\mathcal C^{i,\beta}_{\Phi'_{r}})^{\binom{r}{a-\beta}}.
\end{align*}\endgroup

\medskip\noindent (\ref{6.3.b}). Assertion (\ref{6.3.b.i}) is a special case of (\ref{6.3.a}).  
Recall that $\Phi'_{\goth g-1}:{G'}^*\to F'$ is a homomorphism,  $G'$ is a free module  of rank one, and $F'$ is a free module of rank $\goth f-\goth g+1$.  
The complex $\mathcal C^{i,1}_{\Phi'_{g-1}}$ is the Koszul complex on a generating set for ${I_1(\Phi'_{g-1})=I_{\goth g}(\Phi)}$.
  If $I_\goth g(\Phi)=R$, then $\mathcal C^{i,1}_{\Phi'_{g-1}}$ is split exact. This is (\ref{6.3.b.ii}). Otherwise,  the hypothesis $\goth f-\goth g+1\le \grade I_{\goth g}(\Phi)$ ensures that  $\grade I_{\goth g}(\Phi)$ is generated by a regular sequence and therefore 
$\mathcal C^{i,1}_{\Phi'_{g-1}}$ is a resolution of $\HH_0(\mathcal C^{i,1}_{\Phi'_{g-1}})=R/I_{\goth g}(\Phi)$. This is (\ref{6.3.b.iii}).
\end{proof}

Theorem~\ref{main} is the main result of the paper.

\begin{theorem}
\label{main}Adopt Data~{\rm\ref{setup}}. Let  $i$ and $a$ be integers with $1\le a\le \goth g$. Recall the complex $\mathcal C_\Phi^{i,a}$ from Definition~{\rm \ref{CiaPhi}} and Observation~{\rm\ref{complex}}.    
Assume that
$I_{\goth g}(\Phi)$ is a proper ideal of $R$ with $ 
\goth f-\goth g+1\le \grade I_{\goth g}(\Phi)$.

\begin{enumerate} [\rm(a)]
\item\label{2.a} If  $\length(\mathcal C_\Phi^{i,a})=\goth f-\goth g+1$ and $(\mathcal C_\Phi^{i,a})_j=0$ for $j\le -1$, then the following statements hold.

\begin{enumerate}[\rm(i)]
\item\label{emain.a} The complex $\mathcal C^{i,a}_\Phi$ is acyclic.
\item\label{emain.c} The $R$-module $\HH_0(\mathcal C^{i,a}_\Phi)$ is  perfect  of projective dimension $\goth f-\goth g+1$. 
\item\label{emain.d} The $(R/I_\goth g(\Phi))$-module $\HH_{0}(\mathcal C^{i,a}_\Phi)$ is 
torsion-free.  
\item\label{emain.d'} If $\goth f-\goth g+2\le \grade I_{\goth g-1}(\Phi)$, then  the $(R/I_\goth g(\Phi))$-module $\HH_{0}(\mathcal C^{i,a}_\Phi)$ has rank $\binom{\goth g-1}{a-1}$. 
\end{enumerate}

\item\label{emain.b.ii} If \ $-1\le i$ and  $\goth f-t+1\le \grade I_{t}(\Phi)$ for all $t$
with $\goth f+1-\length(\mathcal C^{i,a}_\Phi) \leq  
t 
\leq \goth g-1$, 
then 
\begin{enumerate}[\rm(i)]
\item\label{emain.b.ii.1} the complex $\mathcal C^{i,a}_\Phi$ is acyclic,
\item\label{emain.b.ii.2}  $\HH_{j}({\mathcal C}^{\goth f-\goth g-i-1,\goth g+1-a}_\Phi) = 
\Ext_{R}^{\goth f-\goth g+1-j}(\HH_{0}(\mathcal C^{i,a}_\Phi),R)$ 
for all $j$, and
\item\label{emain.b.ii.3}  $\HH_{j}({\mathcal C}^{\goth f-\goth g-i-1,\goth g+1-a}_\Phi) = 0$ for $1\le j$.\end{enumerate}
\item\label{emain.b.iii}  If \ $-1\le i$, $\goth f-\goth g+2 \le \length (\mathcal C^{i,a}_\Phi)$,  and \begin{equation}\label{hyp.b.iii} \goth f - t + 2\le \grade I_{t}(\Phi),\quad\text{for all
$t$ with $\goth f+1- \length(\mathcal C^{i,a}_\Phi)
\leq t \leq \goth g - 1$},\end{equation} then $\HH_{0}(\mathcal C^{i,a}_\Phi)$ is a torsion-free
$(R/I_{\goth g}(\Phi))$-module of rank $\binom{\goth g-1}{a-1}$.
\end{enumerate}
\end{theorem}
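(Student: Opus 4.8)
The plan is to derive the entire theorem from a single acyclicity statement, which I would prove by iterating Lemma~\ref{key} (i.e.\ via Lemma~\ref{6.3}) and then invoking the acyclicity lemma; all of (a)(ii)--(iv), (b), and (c) then follow from Observations~\ref{ann(H0)}, \ref{H0}, \ref{duality} and Proposition~\ref{1.25}. The statement to isolate is: \emph{if $-1\le i$, $\ell:=\length(\mathcal C^{i,a}_\Phi)$, and $\grade I_t(\Phi)\ge \goth f-t+1$ for every $t$ with $\goth f+1-\ell\le t\le \goth g$, then $\mathcal C^{i,a}_\Phi$ is acyclic.} When $\ell=\goth f-\goth g+1$ the range of $t$ collapses to $\{\goth g\}$, so this covers the hypotheses of (a); for a long complex the ranges appearing in (b) and (c) are exactly what it requires. (Recall that $(\mathcal C^{i,a}_\Phi)_j=0$ for $j\le -1$ is automatic by Observation~\ref{3.7}.)

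I would prove this by induction on $\goth g$. The base case $\goth g=1$ reduces, after the canonical identification $\bigwedge^{\goth g}G\cong R$, to the acyclicity of the Koszul complex on the $\goth f$ entries of $\Phi$, which holds because $\grade I_1(\Phi)\ge \goth f$. For the inductive step, fix a prime $\mathfrak p$ and let $t'$ be the largest integer with $I_{t'}(\Phi)\not\subseteq\mathfrak p$. If $t'\ge 1$, one may write $\Phi_\mathfrak p\cong\operatorname{diag}(\id_{R_\mathfrak p^{\,t'}},\Phi')$ over $R_\mathfrak p$ with $\Phi'$ an $(\goth f-t')\times(\goth g-t')$ matrix satisfying $I_s(\Phi'_\mathfrak p)=I_{s+t'}(\Phi)_\mathfrak p$ for all $s$, and then Lemma~\ref{6.3} gives
\[
\HH_j\big((\mathcal C^{i,a}_\Phi)_\mathfrak p\big)\cong\bigoplus_{\beta}\HH_j(\mathcal C^{i,\beta}_{\Phi'})^{\binom{t'}{a-\beta}},
\]
with split exactness when $t'\ge \goth g-1$ (a further reduction lands on a Koszul complex on a unit). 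Since localization does not lower grade and $I_s(\Phi'_\mathfrak p)=I_{s+t'}(\Phi)_\mathfrak p$, the grade hypotheses transfer to $\Phi'$ over the range needed for each $\mathcal C^{i,\beta}_{\Phi'}$, and $\goth g-t'<\goth g$, so the inductive hypothesis kills $\HH_j(\mathcal C^{i,\beta}_{\Phi'})$ for $j\ge 1$. Hence $\HH_j(\mathcal C^{i,a}_\Phi)_\mathfrak p=0$ for $j\ge 1$ whenever $I_1(\Phi)\not\subseteq\mathfrak p$, i.e.\ $\Supp\HH_j(\mathcal C^{i,a}_\Phi)\subseteq V(I_1(\Phi))$ for $j\ge1$. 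If some $\HH_j$ ($j\ge1$) were nonzero, choose $\mathfrak p$ minimal in its support; each $\HH_j(\mathcal C^{i,a}_\Phi)_\mathfrak p$ then has finite length over $R_\mathfrak p$, while $\mathfrak p\supseteq I_1(\Phi)\supseteq I_{\goth f+1-\ell}(\Phi)$ gives $\depth R_\mathfrak p\ge\grade I_{\goth f+1-\ell}(\Phi)\ge\ell$, so the acyclicity lemma applied to $(\mathcal C^{i,a}_\Phi)_\mathfrak p$ (each free term has depth $\ge\ell\ge j$; each $\HH_j$ is $0$ or of depth $0$) forces $\HH_j(\mathcal C^{i,a}_\Phi)_\mathfrak p=0$, a contradiction. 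This proves (a)(i), (b)(i), and the acyclicity used below.

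Granting this, in the setting of (a) the complex $\mathcal C^{i,a}_\Phi$ is a free resolution of $M:=\HH_0(\mathcal C^{i,a}_\Phi)$ of length $\le\goth f-\goth g+1$; since $I_\goth g(\Phi)\subseteq\ann M$ (Observation~\ref{ann(H0)}), we get $\goth f-\goth g+1\le\grade\ann M\le\pd_RM\le\goth f-\goth g+1$, so $M$ is perfect of projective dimension $\goth f-\goth g+1$ (and $M\ne0$ by Observation~\ref{H0} together with $R/I_1(\Phi)\ne0$): this is (a)(ii). Using the acyclicity statement with $(i,a)=(0,1)$ shows the Eagon--Northcott complex resolves $R/I_\goth g(\Phi)$, so $I_\goth g(\Phi)$ is perfect, and since $\pd_RM=\goth f-\goth g+1\le\grade I_\goth g(\Phi)$ the last sentence of Proposition~\ref{1.25} gives (a)(iii). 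For (a)(iv): at $\mathfrak q\in\Ass(R/I_\goth g(\Phi))$, Auslander--Buchsbaum and perfection give $\depth R_\mathfrak q=\goth f-\goth g+1$, so $\grade I_{\goth g-1}(\Phi)\ge\goth f-\goth g+2$ forces $I_{\goth g-1}(\Phi)\not\subseteq\mathfrak q$, whence $t'(\mathfrak q)=\goth g-1$ and Lemma~\ref{6.3}(b)(i) identifies $M_\mathfrak q$ with $\big((R/I_\goth g(\Phi))_\mathfrak q\big)^{\binom{\goth g-1}{a-1}}$, so $\rank M=\binom{\goth g-1}{a-1}$. For (b)(ii)--(iii): with $-1\le i$ the complex $\mathcal C^{i,a}_\Phi$ is a free resolution of $M$ in non-negative degrees, so the cohomology of its dual is $\Ext_R^\bullet(M,R)$; Observation~\ref{duality} identifies a shift of that dual with $\mathcal C^{\goth f-\goth g-i-1,\goth g+1-a}_\Phi$ (up to the canonically rank-one twist by $\bigwedge^\goth f F$), giving $\HH_j(\mathcal C^{\goth f-\goth g-i-1,\goth g+1-a}_\Phi)\cong\Ext_R^{\goth f-\goth g+1-j}(M,R)$, which vanishes for $j\ge 1$ since $\grade\ann M\ge\goth f-\goth g+1$. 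Finally (c) is Proposition~\ref{1.25} in full: the $(\goth f-t+2)$-hypotheses imply the $(\goth f-t+1)$-hypotheses of (b), so $\mathcal C^{i,a}_\Phi$ resolves $M$ with $\pd_RM=\ell$; the Lemma~\ref{6.3} reduction shows $\pd_{R_\mathfrak p}M_\mathfrak p\le \goth f-t'(\mathfrak p)$, hence the ideal $F_w$ of Proposition~\ref{1.25} contains $I_{\goth f-w+1}(\Phi)$, whose grade is $\ge (\goth f-(\goth f-w+1))+2=w+1$ for every $w$ in the required range $\grade I_\goth g(\Phi)+1\le w\le\ell$; Proposition~\ref{1.25} then yields torsion-freeness over $R/I_\goth g(\Phi)$, and the rank is $\binom{\goth g-1}{a-1}$ as in (a)(iv).

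The hard part will be the inductive step of the acyclicity statement: arranging the local block decomposition of $\Phi_\mathfrak p$ so that $I_s(\Phi'_\mathfrak p)=I_{s+t'}(\Phi)_\mathfrak p$, checking that Lemma~\ref{6.3} applies verbatim over $R_\mathfrak p$ (in particular that $\Phi''=\id$ satisfies the hypotheses of Data~\ref{data6}), and verifying that the length bounds and grade ranges propagate so that the induction on $\goth g$ and the minimal-support argument close together.
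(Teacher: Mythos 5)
Your architecture matches the paper's: iterate Lemma~\ref{key} through Lemma~\ref{6.3}, obtain acyclicity, then deduce (a)(ii) from the grade sandwich, (a)(iii)--(iv) and (c) from Proposition~\ref{1.25} and Lemma~\ref{6.3}.\ref{6.3.b}, and (b)(ii)--(iii) from Observation~\ref{duality}. Where you genuinely differ is in the acyclicity step. The paper proves (a)(i) directly (localize at a prime of grade $\le\goth f-\goth g$, find a $(\goth g-1)\times(\goth g-1)$ unit minor, invoke Lemma~\ref{6.3}.\ref{6.3.b.iii}) and then proves (b)(i) by induction on $\ell=\length(\mathcal C^{i,a}_\Phi)$, at each stage localizing at a prime of grade $<\ell$ and carving off a unit minor of the specific size $\goth f+1-\ell$, so that the complexes returned by Lemma~\ref{6.3}.\ref{6.3.a} automatically have length $\le\ell-1$. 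You instead package (a)(i) and (b)(i) into one statement and induct on $\goth g$ using the largest available unit minor $t'$; this shrinks $\goth g$ but does not automatically control the new length, so to transfer the grade window to $\Phi'$ over $R_{\mathfrak p}$ you tacitly need $\length(\mathcal C^{i,\beta}_{\Phi'})\le\ell$ for each $\beta$ that occurs (necessarily $\beta\le a$). That inequality does hold --- one reads it off (\ref{spot-by-spot}) using $\beta\le a$ and $\goth f'<\goth f$ --- but a full write-up has to verify it, as you rightly flag at the end.

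Two small slips. In the minimal-prime step, choose $\mathfrak p$ minimal in $\bigcup_{j\ge1}\Supp\HH_j(\mathcal C^{i,a}_\Phi)$, not in the support of a single $\HH_j$, so that \emph{every} $\HH_k(\mathcal C^{i,a}_\Phi)_{\mathfrak p}$ with $k\ge1$ is zero or of depth zero, as the acyclicity lemma requires. And your isolated acyclicity statement assumes $-1\le i$, which omits the case $(i,a)=(-2,\goth g)$ permitted by the hypotheses of part (a) (see Observation~\ref{3.7}.\ref{3.7.c}); this is immediate to repair via Observation~\ref{new=old}.
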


\begin{remarks-no-advance}\begin{enumerate}[\rm(a)]

\item \label{main-se} Recall from Lemma \ref{6.3}.\ref{6.3.b.ii} that if $I_{\goth g}(\Phi) = R$, then  $\mathcal C^{i,a}_\Phi$ 
is split exact.

\item Observation~\ref{3.7} contains elementary facts about the length of the complexes $\mathcal C^{i,a}_\Phi$. In particular, the hypotheses of (\ref{2.a}) are satisfied when   $$-1\le i\le \goth f-\goth g,\quad\text{or}\quad
 (i,a)=(\goth f-\goth g+1,1),\quad\text{or}\quad
 (i,a)=(-2,\goth g).$$
\item In the generic case (when $\Phi$ can be represented by a matrix of variables) all of the grade hypotheses of Theorem~\ref{main} are automatically satisfied because $$\goth f-t+2\le (\goth g-t+1)(\goth f-t+1),\quad \text{whenever $1\le t\le \goth g-1$}.$$

\item The modules $\HH_0(\mathcal C^{i,a}_\Phi)$ are recorded in Observation~\ref{H0}.
\end{enumerate}\end{remarks-no-advance}

\begin{proof}Throughout this proof, let $\mathcal C$ represent $\mathcal C^{i,a}_\Phi$ and $\ell$ and represent $\length(\mathcal C)$.

\medskip\noindent 
{(\ref{emain.a})}.   
The complex $\mathcal C$ has 
the form $$0\to \mathcal C_{\goth f-\goth g+1}\to \cdots \to\mathcal C_1\to \mathcal C_0\to 0
;$$ see Observation~\ref{3.7}. Let $\mathfrak p$ be a prime ideal of $R$ with $\grade \mathfrak p\le \goth f-\goth g$. Observe that 
$$\grade \mathfrak p\le \goth f-\goth g<\goth f-\goth g+1\le \grade I_{\goth g}(\Phi)\le \grade I_{\goth g-1}(\Phi).$$ Thus, there is a $(\goth g-1)\times (\goth g-1)$ minor of $\Phi$ which is a unit in $R_\mathfrak p$ and we may apply Lemma~\ref{6.3}.\ref{6.3.b.iii} in order to conclude that $\mathcal C_\mathfrak p$ is acyclic. The acyclicity criterion (see, for example, \cite[1.4.13]{BH}) now guarantees that $\mathcal C$ is acyclic. 

\medskip\noindent{(\ref{emain.c}).} We know from Observation~\ref{ann(H0)} that $I_\goth g(\Phi)\subseteq \ann(\HH_0(\mathcal C))$; and therefore,
\begingroup\allowdisplaybreaks\begin{align*}
\pd_R\HH_0(\mathcal C)&\le \goth f-\goth g+1&&\text{by \ref{emain.a}}\\
&\le \grade I_\goth g(\Phi)&&\text{by hypothesis}\\
&\le \grade \ann (\HH_0(\mathcal C))\\&\le \pd_R\HH_0(\mathcal C) &&\text{by (\ref{above})}.\end{align*}\endgroup
The proof of (\ref{emain.c}) is complete; see \ref{perfect}, if necessary.

\medskip\noindent Assertion {(\ref{emain.d})} is a consequence of
Proposition~\ref{1.25}.

\medskip\noindent   {(\ref{emain.d'}).} The $R$-module $R/I_g(\Phi)$ is perfect of projective dimension $\goth f-\goth g+1$. (See, for example, \cite[Cor.~5.2]{E61}, \cite[Thm.~1]{N63}. or \cite[2.7]{BV}.) Let $\mathfrak p\in \Spec R$ be an associated prime of $R/I_\goth g(\Phi)$. It follows that $\grade \mathfrak p=\goth f-\goth g+1$ and $I_{\goth g-1}(\Phi)\not\subseteq  \mathfrak p$. Thus, Corollary~\ref{6.3}.\ref{6.3.b.iii} may be applied to $\Phi_\mathfrak p$ in order to conclude that $\HH_0(\mathcal C^{i,a}_\Phi)_{\mathfrak p}\cong \HH_0(R/I_{\goth g}(\Phi))_{\mathfrak p}^{\binom{\goth g-1}{a-1}}$. The proof of {(\ref{emain.d'})} is complete; see \ref{rank}.

\medskip\noindent 
{(\ref{emain.b.ii.1}).}  We induct on $\ell$. The base case, $\ell= \goth f-\goth g+1$, is established in (\ref{emain.a}). We now study the case with
$\goth f-\goth g+2\le \ell$.
 As in the proof of (\ref{emain.a}) we apply the acyclicity criterion and prove that $\mathcal C_\mathfrak p$ is acyclic for all prime ideals $\mathfrak p$ of $R$ with $\grade \mathfrak p<\ell$. Fix such a $\mathfrak p$. The hypotheses of (\ref{emain.b.ii}) with $t=\goth f+1-\ell$ now ensure that $$\grade \mathfrak p<\ell\le \grade I_{\goth f+1-\ell}(\Phi).$$ Thus, there is an $(\goth f+1-\ell)\times (\goth f+1-\ell)$ minor of $\Phi$ which is a unit in in $R_\mathfrak p$ and, after rearrangement,
$$\Phi_{\mathfrak p}=\bmatrix \Phi_{\goth f+1-\ell}'&0\\0&\Phi_{\goth f+1-\ell}''\endbmatrix,$$ as described in (\ref{Phi'}). 
Apply Corollary~\ref{6.3}.\ref{6.3.a} in order to conclude that
\begin{equation}\label{lincom}\begin{array}{l}\HH_j(\mathcal C)_\mathfrak p\text{ is a direct of suitably many copies of modules from the set}\\
\{\HH_j(\mathcal C^{i,\beta}_{\Phi_{\goth f+1-\ell}'})\mid 1\le \beta\le \goth g-(\goth f+1-\ell)\}.\end{array}\end{equation}
Observe that 
$$\goth f-t+1\le \grade I_t(\Phi)\le \grade I_t(\Phi_\mathfrak p)=\grade I_{t-(\goth f+1-\ell)}(\Phi_{\goth f+1-\ell}')$$ for all $t$ with $\goth f+1-\ell\le t\le \goth g-1$. 
Let $$\goth f'=\goth f-(\goth f+1-\ell),\quad \goth g'=\goth g-(\goth f+1-\ell), \quad \text{and}\quad t'=t-(\goth f+1-\ell).$$ We have shown that
$$\goth f'-t'+1\le \grade I_{t'}(\Phi_{\goth f+1-\ell}')\quad \text{for all $t'$ with $0\le t'\le \goth g'-1$.}$$
It follows that the hypotheses of (\ref{emain.b.ii}) apply to $\mathcal C^{i,\beta}_{\Phi_{\goth f+1-\ell}'}$ for each $\beta$ with $1\le \beta\le \goth g'$. On the other hand, 
$$\length(\mathcal C^{i,\beta}_{\Phi_{\goth f+1-\ell}'})\le \rank(\text{the target of }\Phi_{\goth f+1-\ell}')=\goth f'=\goth f-(\goth f+1-\ell)=\ell-1. $$ By induction on $\ell$, each $\mathcal C^{i,\beta}_{\Phi_{\goth f+1-\ell}'}$ is acyclic; and therefore $\mathcal C_\mathfrak p$ is also acyclic by (\ref{lincom}).

\medskip\noindent(\ref{emain.b.ii.2}) and  (\ref{emain.b.ii.3}). Now that we know that $\mathcal C$ is a resolution of $\HH_0(\mathcal C)$,
assertion (\ref{emain.b.ii.2}) can be read from Observation~\ref{duality}; and (\ref{emain.b.ii.3}) follows from (\ref{emain.b.ii.2})
because
the grade $\goth f-\goth g+1$ ideal $I_\goth g(\Phi)$ is contained in the annihilator of $\HH_0(\mathcal C)$.

\medskip\noindent{(\ref{emain.b.iii}).}   
We already know from (\ref{emain.b.ii.1}) that 
\begin{equation}\label{IS}\text{$\mathcal C$ is a free resolution of $\HH_0(\mathcal C)$ of length $\ell$.}\end{equation} 
For each integer $w$, let $F_w$ be the 
ideal in $R$ generated by:
$$  \{ x \in R \mid \pd_{R_x} \HH_0(\mathcal C^{i,a}_\Phi)_x < w \}. $$
\begin{claim-no-advance}\label{claim3} If $\goth f+1-\ell\le t \le \goth g-1$, then $I_t(\Phi)\subseteq F_{\goth f-t+1}$.
\end{claim-no-advance}

\begin{proof-of-claim-3}If $\Delta$ is a $t\times t$ minor of $\Phi$, then one can arrange the data so that $$\Phi_{\Delta}=\bmatrix \Phi'_t&0\\0&\Phi''_t\endbmatrix,$$ where  $\Phi''_t$ is an isomorphism of free $R_\Delta$-modules of rank $t$ as is described in Data~\ref{data6}. Apply Corollary~\ref{6.3}.\ref{6.3.a}
to see that 
\begin{equation}\label{*sum}\HH_j(\mathcal C)_\Delta\cong 
\bigoplus\limits_{\beta=1}^{\goth g-t}
\HH_j(\mathcal C^{i,\beta}_{\Phi'_{t}})^{\binom{t}{a-\beta}},\quad\text{for all $j$}.\end{equation} Apply (\ref{IS}) to see that each $\mathcal C^{i,\beta}_{\Phi'_{t}}$ which actually appears in (\ref{*sum}) (that is, with $\binom t{a-\beta}\neq 0$) is also acyclic. Thus,
$$\pd_{R_\Delta} \HH_0(\mathcal C)_\Delta\le \max\{\length(\mathcal C^{i,\beta}_{\Phi'_{t}})\mid 1\le \beta\le \goth g-t\}\le \rank(\text{the target of $\Phi'_{t}$})=\goth f-t. $$
This completes the proof of Claim~\ref{claim3}. 
\end{proof-of-claim-3}

Combine hypothesis (\ref{hyp.b.iii}) and Claim~\ref{claim3} to see that 
$$\goth f+1-\ell\le t\le \goth g-1 \implies (\goth f-t+1)+1=\goth f-t+2\le \grade I_t(\Phi)\le \grade F_{\goth f-t+1}.$$ Let $w=\goth f-t+1$. We have shown that 
$$\goth f-\goth g+2\le w\le \ell\implies w+1\le \grade F_w.$$ Apply Proposition~\ref{1.25} to conclude that the $R/I_{\goth g}(\Phi)$ module $\HH_0(\mathcal C)$ is torsion-free. 

We re-use the rank calculation of (\ref{emain.d'}). If $\mathfrak p\in \Ass_R(R/I_\goth g(\Phi))$, then
$$\grade \mathfrak p=\goth f-\goth g+1<\goth f-\goth g+3\le \grade I_{\goth g-1}(\Phi);$$
hence, Corollary~\ref{6.3}.\ref{6.3.b.iii} may be applied to $\Phi_\mathfrak p$, as was done in (\ref{emain.d'}), to conclude that $\rank \HH_0(\mathcal C)=\binom{\goth g-1}{a-1}$.\end{proof}

The next result was promised in (\ref{DS}), and is our main motivation for writing the paper.
\begin{corollary}
\label{emain.cc} 
Adopt Data~{\rm\ref{setup}}. Let and $i$ and $a$ be integers with $1\le a\le \goth g$. Recall the complex $\mathcal C_\Phi^{i,a}$ from Definition~{\rm \ref{CiaPhi}} and Observation~{\rm\ref{complex}}.  If $-1\le i\le \goth f-\goth g$, 
then
$$\HH_j(\mathcal C^{i,a}_\Phi)=0 \quad \text{for} \quad \goth f-\goth g+2-\grade I_{\goth g}(\Phi)\le j.$$\end{corollary}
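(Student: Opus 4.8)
The plan is to reduce to the depth-sensitivity machinery of Proposition~\ref{depth-sensitivity-result} by realizing $\mathcal C^{i,a}_\Phi$ as the base change of a complex defined over a polynomial ring (over $\mathbb Z$) in the generic entries of $\Phi$. First I would let $A = \mathbb Z[x_{kl}]$ be the polynomial ring in $\goth f\goth g$ variables, let $\Xi\colon G_A^*\to F_A$ be the generic homomorphism whose matrix is $(x_{kl})$, and let $R$ receive a ring homomorphism $A\to R$ sending $x_{kl}$ to the entries of a matrix representation of $\Phi$, so that $\mathcal C^{i,a}_\Phi = \mathcal C^{i,a}_\Xi\otimes_A R$. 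Since $-1\le i\le \goth f-\goth g$, Observation~\ref{3.7}.\ref{3.7.c} guarantees $\length(\mathcal C^{i,a}_\Xi)\le \goth f-\goth g+1$ and $(\mathcal C^{i,a}_\Xi)_j = 0$ for $j\le -1$; and over the polynomial ring $A$ the ideal $I_{\goth g}(\Xi)$ is a generic determinantal ideal of grade exactly $\goth f-\goth g+1$ (e.g.\ \cite[Cor.~5.2]{E61}, \cite[Thm.~1]{N63}, or \cite[2.7]{BV}), so the grade hypothesis $\goth f-\goth g+1\le \grade I_{\goth g}(\Xi)$ of Theorem~\ref{main} is met. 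By Theorem~\ref{main}.\ref{2.a}.\ref{emain.a}, $\mathcal C^{i,a}_\Xi$ is an acyclic complex of free $A$-modules, hence a free resolution of $M := \HH_0(\mathcal C^{i,a}_\Xi)$; and by Theorem~\ref{main}.\ref{2.a}.\ref{emain.c}, $M$ is a perfect $A$-module of projective dimension $\goth f-\goth g+1$, so $\mathcal C^{i,a}_\Xi$ is a projective resolution of $M$ of length equal to $\pd_A M$.

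Next I would identify $\ann_A M$, or at least a sub-ideal of it, with $I_{\goth g}(\Xi)$. By Observation~\ref{ann(H0)}, $I_{\goth g}(\Xi)\subseteq \ann_A M$, which is all that is needed: Proposition~\ref{depth-sensitivity-result} (together with the remark immediately following it that $\ann M$ may be replaced by any sub-ideal) applies to the map $A\to R$, the perfect $A$-module $M$, and the resolution $\mathbb F = \mathcal C^{i,a}_\Xi$, giving
$$\HH_j(\mathcal C^{i,a}_\Xi\otimes_A R) = 0 \quad\text{for}\quad \pd_A M - \grade\big(I_{\goth g}(\Xi)R\big) + 1\le j.$$
Now $\pd_A M = \goth f-\goth g+1$ and $I_{\goth g}(\Xi)R = I_{\goth g}(\Phi)$, so the right-hand inequality reads $\goth f-\goth g+2-\grade I_{\goth g}(\Phi)\le j$; and the left-hand side is exactly $\HH_j(\mathcal C^{i,a}_\Phi)$ because base change commutes with forming this complex. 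This is the desired conclusion.

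The only genuine subtlety is the passage to the generic situation: one must check that $\mathcal C^{i,a}_\Phi$ really is $\mathcal C^{i,a}_\Xi\otimes_A R$, i.e.\ that the construction in Definition~\ref{CiaPhi} is compatible with base change. This is immediate from the coordinate-free description, since $\ev^*(1)$, the multiplications in the exterior and symmetric algebras, the module actions, and the maps $\bigwedge^a\Phi$, $\bigwedge^{\goth g+1-a}\Phi$, $\eta_\Phi$, $\Kos_\Phi$, $\kappa$ all commute with $-\otimes_A R$, and the modules $\bigwedge^\bullet F$, $L^p_q G$, $K^p_q(G^*)$ are free, hence base change commutes with forming them and with forming kernels of split maps; so $\mathcal C^{i,a}_\Xi\otimes_A R\cong \mathcal C^{i,a}_\Phi$ as complexes. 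The only other point to keep in mind is that when $i=\goth f-\goth g$ one still has $-1\le i\le \goth f-\goth g$, so Observation~\ref{3.7}.\ref{3.7.c} and Theorem~\ref{main}.\ref{2.a} apply on the nose; there is no boundary case to treat separately. I expect no real obstacle beyond bookkeeping, since all the heavy lifting—acyclicity and perfection of $\mathcal C^{i,a}_\Xi$ over the polynomial ring—is already furnished by Theorem~\ref{main}.
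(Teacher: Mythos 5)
Your proof is correct and follows exactly the same path the paper takes: pass to the generic homomorphism $\Xi$ over $A=\mathbb Z[x_{kl}]$, apply Theorem~\ref{main}.\ref{2.a} to see that $\mathcal C^{i,a}_\Xi$ is an $A$-free resolution of a perfect module of projective dimension $\goth f-\goth g+1$, use Observation~\ref{ann(H0)} to exhibit $I_{\goth g}(\Xi)$ as a sub-ideal of the annihilator, and then invoke Proposition~\ref{depth-sensitivity-result} (with the remark following it about sub-ideals) together with base-change compatibility. The paper's own proof is just the two-sentence compression of exactly this argument. One small slip: you cite Observation~\ref{3.7}.\ref{3.7.c} as giving $\length(\mathcal C^{i,a}_\Xi)\le \goth f-\goth g+1$, but the hypothesis of Theorem~\ref{main}.\ref{2.a} requires equality; for $-1\le i\le \goth f-\goth g$ and $1\le a\le\goth g$ the modules $(\mathcal C^{i,a}_\Xi)_0$ and $(\mathcal C^{i,a}_\Xi)_{\goth f-\goth g+1}$ are both nonzero (a quick check against (\ref{spot-by-spot}) and the rank formulas), which is what the remark after Theorem~\ref{main} is asserting, so equality does hold — you just need to say so.
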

\begin{proof} Assertion~\ref{main}.\ref{2.a} may be applied in the generic case with the ring equal to the polynomial ring $\mathbb Z[\{x_{i,j}\}]$ and the homomorphism given by a matrix of indeterminates.
The present assertion  is a consequence of Proposition~\ref{depth-sensitivity-result}.
\end{proof}

The next result was promised in (\ref{promise}).

\begin{corollary}\label{max-CM}If $k$ is a field, $\goth g\le \goth f$ are positive integers, $R$ is the polynomial ring$$R=k[\{x_{i,j}\mid 1\le j\le \goth g, 1\le i\le \goth f\}],$$ $\Phi:R^{\goth g}\to R^{\goth f}$ is the generic map given by 
$$\bmatrix x_{1,1}&\dots&x_{1,\goth g}\\\vdots&&\vdots\\
 x_{\goth f,1}&\dots&x_{\goth f,\goth g}\endbmatrix,$$
and $M=\HH_0(\mathcal C^{i,a}_\Phi)$ for some $i$ and $a$ with $-1\le i\le \goth f-\goth g$ and $1\le a\le \goth g$, then $\ann_R(M)=I_\goth g(\Phi)$ and $M$ is a maximal Cohen-Macaulay $(R/I_\goth g(\Phi))$-module of rank $\binom{\goth g-1}{a-1}$.
\end{corollary}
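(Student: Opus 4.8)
The plan is to deduce Corollary~\ref{max-CM} directly from Theorem~\ref{main} by checking that the generic situation meets all of that theorem's hypotheses. First I would record the standard facts about the generic matrix $\Phi$ over $R=k[\{x_{i,j}\}]$: for each $t$ with $1\le t\le \goth g$, the ideal $I_t(\Phi)$ is a proper ideal and $\operatorname{grade} I_t(\Phi)=(\goth f-t+1)(\goth g-t+1)$ (see, e.g., \cite[Thm.~2.1 and Cor.~5.2]{BV} or the references already cited in the proof of Theorem~\ref{main}.\ref{emain.d'}). In particular $\operatorname{grade} I_\goth g(\Phi)=\goth f-\goth g+1$, so the blanket hypothesis $\goth f-\goth g+1\le \operatorname{grade} I_\goth g(\Phi)$ of Theorem~\ref{main} holds, and for $1\le t\le \goth g-1$ one has the inequality $\goth f-t+2\le (\goth f-t+1)(\goth g-t+1)=\operatorname{grade} I_t(\Phi)$, which is exactly what is observed in the remark following Theorem~\ref{main}. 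This last inequality gives the hypothesis $\goth f-t+2\le\operatorname{grade} I_t(\Phi)$ needed for parts (\ref{emain.b.ii}) and (\ref{emain.b.iii}), a fortiori the weaker inequality $\goth f-t+1\le\operatorname{grade} I_t(\Phi)$ needed for part (\ref{emain.b.ii}), and also the hypothesis $\goth f-\goth g+2\le\operatorname{grade} I_{\goth g-1}(\Phi)$ of part (\ref{emain.d'}).

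Next I would split into the two length regimes according to Observation~\ref{3.7}. Since $-1\le i\le \goth f-\goth g$ and $1\le a\le\goth g$, Observation~\ref{3.7}.\ref{3.7.c} tells us $(\mathcal C^{i,a}_\Phi)_j=0$ for $j\le -1$ and for $\goth f-\goth g+2\le j$, so $\ell:=\length(\mathcal C^{i,a}_\Phi)\le \goth f-\goth g+1$. If $\ell=\goth f-\goth g+1$, then Theorem~\ref{main}.\ref{2.a} applies verbatim: part (i) gives acyclicity, part (ii) gives that $M=\HH_0(\mathcal C^{i,a}_\Phi)$ is perfect of projective dimension $\goth f-\goth g+1$, and since $\operatorname{grade} I_{\goth g-1}(\Phi)=2(\goth f-\goth g+2)>\goth f-\goth g+2$ part (iv) gives that $M$ has rank $\binom{\goth g-1}{a-1}$ as an $(R/I_\goth g(\Phi))$-module. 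If instead $\ell\le\goth f-\goth g$, then since $\goth f+1-\ell\ge\goth g+1$ the index range ``$\goth f+1-\ell\le t\le\goth g-1$'' is empty, so the grade hypotheses of Theorem~\ref{main}.\ref{emain.b.ii} are vacuously satisfied; part (i) there gives acyclicity, and $M$ has projective dimension $\ell\le\goth f-\goth g$. In this degenerate regime $\operatorname{grade} I_\goth g(\Phi)=\goth f-\goth g+1>\ell=\pd_RM$, so Proposition~\ref{1.25} (its ``in particular'' clause) shows $M$ is a torsion-free $(R/I_\goth g(\Phi))$-module; the rank is computed as before by localizing at an associated prime $\mathfrak p$ of $R/I_\goth g(\Phi)$, where $\operatorname{grade}\mathfrak p=\goth f-\goth g+1<\operatorname{grade} I_{\goth g-1}(\Phi)$ forces some $(\goth g-1)\times(\goth g-1)$ minor of $\Phi$ to become a unit, so that Lemma~\ref{6.3}.\ref{6.3.b.iii} applies to $\Phi_\mathfrak p$ and yields $M_\mathfrak p\cong (R/I_\goth g(\Phi))_\mathfrak p^{\binom{\goth g-1}{a-1}}$.

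Having the rank and torsion-freeness in hand, I would finish with the two remaining claims. For $\ann_R(M)=I_\goth g(\Phi)$: Observation~\ref{ann(H0)} gives $I_\goth g(\Phi)\subseteq\ann_R(M)$; conversely, $M$ has positive rank $\binom{\goth g-1}{a-1}\ge 1$ over $R/I_\goth g(\Phi)$, so $M$ is not annihilated by any element outside $I_\goth g(\Phi)$ modulo $I_\goth g(\Phi)$ — more precisely, $M$ being a torsion-free module of positive rank over the domain $R/I_\goth g(\Phi)$ (the generic determinantal ring is a domain) forces $\ann_R(M)=I_\goth g(\Phi)$. For maximal Cohen-Macaulayness: the ring $R/I_\goth g(\Phi)$ is Cohen-Macaulay since $I_\goth g(\Phi)$ is a perfect ideal in the regular ring $R$, and $M$ is a perfect $R$-module with $\pd_RM\le\goth f-\goth g+1=\operatorname{grade} I_\goth g(\Phi)=\operatorname{grade}\ann_R(M)$; in the case $\ell=\goth f-\goth g+1$ this is equality, giving perfection, while in the case $\ell\le\goth f-\goth g$ one has $\operatorname{grade}\ann_R(M)=\operatorname{grade} I_\goth g(\Phi)=\goth f-\goth g+1$ and $\pd_RM=\ell$, so I should instead invoke the Auslander--Buchsbaum formula together with the fact that $\dim R/I_\goth g(\Phi)=\dim R-(\goth f-\goth g+1)$: $\operatorname{depth}_R M=\dim R-\pd_R M$, and since $M$ is a torsion-free module over the Cohen-Macaulay domain $R/I_\goth g(\Phi)$ of dimension $\dim R-(\goth f-\goth g+1)\le\dim R-\ell=\operatorname{depth}_R M$, we get $\operatorname{depth}_{R/I_\goth g(\Phi)}M=\dim R/I_\goth g(\Phi)$, i.e. $M$ is maximal Cohen-Macaulay. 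I expect the main obstacle to be nothing deep but rather the careful bookkeeping in the degenerate regime $\ell\le\goth f-\goth g$ — specifically confirming that the relevant grade-hypothesis index ranges in Theorem~\ref{main} are genuinely empty there, and adapting the depth/rank argument (which in Theorem~\ref{main} is run under the assumption $\ell=\goth f-\goth g+1$) to the shorter complex; a clean alternative that sidesteps this is to note $\mathcal C^{i,a}_\Phi$ has length exactly $\goth f-\goth g+1$ for $-1\le i\le\goth f-\goth g$ unless some terminal module vanishes, and to argue the rank/annihilator claims purely by localizing at associated primes of $R/I_\goth g(\Phi)$ as above, which needs only Lemma~\ref{6.3}.
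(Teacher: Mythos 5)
Your proposal is correct and takes essentially the same route as the paper's proof: feed the generic grade estimates into Theorem~\ref{main}.\ref{2.a} to obtain perfection, acyclicity and rank, then deduce $\ann_R M=I_{\goth g}(\Phi)$ and maximal Cohen--Macaulayness from the primeness of $I_{\goth g}(\Phi)$ together with the fact that a perfect module over a Cohen--Macaulay ring is Cohen--Macaulay (the paper pins down the annihilator via unmixedness of perfect modules, \cite[1.4.15]{BH}, while you use torsion-freeness of positive rank over the generic determinantal domain; these are interchangeable). The one piece of wasted effort is the ``degenerate regime'' $\ell\le\goth f-\goth g$: for $-1\le i\le\goth f-\goth g$ and $1\le a\le\goth g$ the complex $\mathcal C^{i,a}_\Phi$ always has length exactly $\goth f-\goth g+1$ (this is the content of the remark following Theorem~\ref{main}), so that case never arises, and---as you come close to noticing---it would in any event contradict $\grade\ann_R M\le\pd_R M$ once $\ann_R M=I_{\goth g}(\Phi)$ is established; the ``clean alternative'' you mention at the end is exactly the right simplification.
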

\begin{proof}The ring $R$ is Cohen-Macaulay and the $R$-module $M$ is perfect (by Theorem~\ref{main}.\ref{2.a}); hence $M$ is a Cohen-Macaulay $R$-module.
If $\mathfrak p$ is in  $\Ass_RM$, then $I_g(\Phi)\subseteq \ann(M)\subseteq \mathfrak p$
(see Observation~\ref{ann(H0)})  and $\goth f-\goth g+1= \grade \mathfrak p$ (by \cite[1.4.15]{BH}). On the other hand, $I_\goth g(\Phi)$ is already a prime ideal of $R$ of grade $\goth f-\goth g+1$. It follows that
$I_\goth g(\Phi)=\mathfrak p$,
  $I_\goth g(\Phi)=\ann_R M$, $\Supp_R(M)=\Supp_R(R/I_g(\Phi))$, and $\Ass_RM=\{I_g(\Phi)\}$. \end{proof}

\begin{remark}A module over a local Artinian ring has rank only if it is free.  Example~\ref{5.6} shows that the hypothesis $\goth f-\goth g+2\le \grade I_{\goth g-1}(\Phi)$ is needed in Theorem~\ref{main}.\ref{emain.d'}. Indeed, if $R=k[x,y]$, for some field $k$, and $$\Phi=\bmatrix x&0&0\\y&x&0\\0&y&x\\0&0&y\endbmatrix,$$ then $\HH_0(\mathcal C^{0,2}_\Phi)$ is 
an $R/I_3(\Phi)$-module which does not have any rank. 
It is easy to see that the length of $\HH_0(\mathcal C^{0,2}_\Phi)$ is twice the length of 
  $R/I_3(\Phi)=k[x,y]/(x,y)^3$; however, $\HH_0(\mathcal C^{0,2}_\Phi)$ has the wrong Betti numbers, as a module over $R$, to be a free  $R/I_3(\Phi)$-module. 
\end{remark}

\end{document}